\documentclass[reqno]{amsart}
\usepackage[foot]{amsaddr}
\usepackage[bottom=3cm, top=3cm, left=3cm, right=3cm]{geometry}
\usepackage{amsmath}
\usepackage{amsthm}
\usepackage{amssymb}
\usepackage{etoolbox}
\usepackage{mathtools}
\usepackage{cases}
\usepackage{empheq}
\usepackage[shortlabels,inline]{enumitem}
\setlist[itemize]{noitemsep,nolistsep}
\setlist[enumerate]{noitemsep,nolistsep}

\usepackage[dvipsnames]{xcolor}
\usepackage{graphicx}
\definecolor{tabblue}{rgb}{0.12156862745098039, 0.4666666666666667, 0.7058823529411765}
\definecolor{taborange}{rgb}{1.0, 0.4980392156862745, 0.054901960784313725}
\definecolor{tabgreen}{rgb}{0.17254901960784313, 0.6274509803921569, 0.17254901960784313}
\definecolor{tabred}{rgb}{0.8392156862745098, 0.15294117647058825, 0.1568627450980392}
\definecolor{tabpurple}{rgb}{0.5803921568627451, 0.403921568627451, 0.7411764705882353}
\definecolor{cblue}{HTML}{2b50aa}
\colorlet{citecolor}{tabgreen}
\colorlet{linkcolor}{cblue!90}
\colorlet{urlcolor}{tabred}

\usepackage{tikz}
\usepackage{tikz-cd}
\usepackage{wrapfig}

\usepackage{hyperref}
\usepackage{url}

\hypersetup{
  breaklinks=true,
  colorlinks=true,
  linkcolor=linkcolor,
  urlcolor=urlcolor,
  citecolor=citecolor,
  bookmarksdepth=3,
  pdftitle={Learning Monge maps with constrained drifting models},
  pdfauthor={Théo Dumont, Théo Lacombe, François-Xavier Vialard},
  }

\DeclareUnicodeCharacter{0308}{o}
  
\usepackage[backref=true, backrefstyle=three, hyperref=true, maxcitenames=3, maxbibnames=50, style=alphabetic, backend=bibtex]{biblatex}
\cslet{blx@noerroretextools}\empty  % autonum and biblatex https://github.com/plk/biblatex/issues/669
\usepackage[noabbrev,capitalize,nameinlink]{cleveref}
\crefname{equation}{}{}
\Crefname{equation}{Eq.}{}

\usepackage{autonum}
\usepackage{thmtools}
\usepackage{algorithm}
\usepackage{algpseudocodex}
\usepackage{subcaption}

\makeatletter
\renewcommand{\tocsection}[3]{%
  \indentlabel{\@ifnotempty{#2}{\bfseries\ignorespaces#1 #2\quad}}\bfseries#3}
\renewcommand{\tocsubsection}[3]{%
  \indentlabel{\@ifnotempty{#2}{\ignorespaces#1 #2\quad}}#3}
\def\l@subsection{\@tocline{2}{0pt}{2.5pc}{5pc}{}}
\makeatother

\allowdisplaybreaks

\newcounter{counter}
\numberwithin{counter}{section}
\numberwithin{equation}{section}
\newtheorem{theorem}[counter]{Theorem}
\newtheorem*{theorem*}{Theorem}

\newtheorem{lemma}[counter]{Lemma}
\newtheorem{innernblemma}{Lemma}
\newenvironment{nblemma}[1]{\renewcommand\theinnernblemma{#1}\innernblemma}{\endinnernblemma}
\newtheorem{proposition}[counter]{Proposition}
\newtheorem*{proposition*}{Proposition}

\newtheorem{corollary}[counter]{Corollary}
\theoremstyle{definition}
\newtheorem{remark}[counter]{Remark}
\newtheorem{example}[counter]{Example}
\newtheorem{definition}[counter]{Definition}

\NewCommandCopy{\proofqedsymbol}{\qedsymbol}% save the default
\AtBeginEnvironment{proof}{\renewcommand{\qedsymbol}{\proofqedsymbol}}
\AtBeginEnvironment{example}{\pushQED{\qed}\renewcommand{\qedsymbol}{$\diamondsuit$}}
\AtEndEnvironment{example}{\popQED\endexample}
\AtBeginEnvironment{remark}{\pushQED{\qed}\renewcommand{\qedsymbol}{$\triangle$}}
\AtEndEnvironment{remark}{\popQED\endexample}

\makeatletter
\newcommand\definealphabetloop[3]{%
  \ifx\relax#3\expandafter\@gobble\else\expandafter\@firstofone\fi
  {\expandafter\providecommand\expandafter*\csname#1#3\endcsname{#2{#3}}%
   \definealphabetloop{#1}{#2}}%
}%
\newcommand\definealphabet[2]{%
  \definealphabetloop{#1}{#2}abcdefghijklmnopqrstuvwxyzABCDEFGHIJKLMNOPQRSTUVWXYZ\relax
}%
\definealphabet{b}{\mathbb}%
\definealphabet{c}{\mathcal}%
\definealphabet{f}{\mathfrak}%
\definealphabet{r}{\mathrm}%
\definealphabet{s}{\mathscr}%
\definealphabet{t}{\mathtt}%
\definealphabet{bf}{\mathbf}%
\makeatother

\renewcommand{\epsilon}{\varepsilon}
\newcommand{\eps}{\varepsilon}
\renewcommand{\rho}{\varrho}
\usepackage{scalerel}
\def\smallzero{{\scaleto{0}{2.5pt}}}
\def\smallone{{\scaleto{1}{2.5pt}}}
\def\smalltwo{{\scaleto{2}{2.5pt}}}
\newcommand{\W}{\operatorname{W}}
\newcommand{\PRd}{\smash{\cP_2(\bR^d)}}
\newcommand{\PRdRd}{\smash{\cP_2(\bR^d\times\bR^d)}}
\newcommand{\PRdac}{\smash{\cP_{2}^\text{ac}(\bR^d)}}

\newcommand{\id}{{\operatorname{id}}}
\newcommand{\K}{{\smash{K_{\rho_\smallzero}}}}
\newcommand{\Lrhoz}{\smash{L^2_{\rho_\smallzero}}}
\newcommand{\LrhozRd}{\smash{L^2_{\rho_\smallzero}(\bR^d,\bR^d)}}
\newcommand{\LrhoRd}{\smash{L^2_{\rho}(\bR^d,\bR^d)}}

\newcommand{\Lrho}{\smash{L^2_{\rho}}}
\newcommand{\dHrhoz}{\smash{\dot H^1_{\rho_\smallzero}}}
\newcommand{\dHrhoRd}{\smash{\dot H^1_{\rho}(\bR^d,\bR)}}
\newcommand{\dHrhozRd}{\smash{\dot H^1_{\rho_\smallzero}(\bR^d,\bR)}}
\newcommand{\optmap}[2]{\smash{T_{#1}^{#2}}}

\newcommand{\Tan}{\operatorname{Tan}}
\newcommand{\Nor}{\operatorname{Nor}}

\newcommand{\nablaa}{\nabla_{\!{\scriptscriptstyle \W}}}

\newcommand{\grad}{\operatorname{grad}}

\DeclareMathOperator*{\argmin}{arg\,min}
\newcommand\dd{\mathop{}\!\mathrm d}
\newcommand{\proj}{\operatorname{proj}}
\renewcommand{\div}{\operatorname{div}}
\newcommand{\midd}{\,|\,}

\usepackage{fancyhdr}
\title{Learning Monge maps with constrained drifting models}
\author[Dumont]{Théo Dumont$^\dagger$}
\author[Lacombe]{Théo Lacombe$^\dagger$}
\author[Vialard]{François--Xavier Vialard$^\dagger$}
\address{\textnormal{$^\dagger$Laboratoire d'Informatique Gaspard Monge, Université Gustave Eiffel, CNRS, F-77454 Marne-la-Vallée, France.}}
\email{\{\href{mailto:theo.dumont@univ-eiffel.fr}{theo.dumont},\href{mailto:theo.lacombe@univ-eiffel.fr}{theo.lacombe},\href{mailto:francois-xavier.vialard@univ-eiffel.fr}{francois-xavier.vialard}\}@univ-eiffel.fr}
\begin{document}

\begin{abstract}
    We study the estimation of optimal transport (OT) maps between an arbitrary source probability measure and a log-concave target probability measure. Our contributions are twofold.
    First, we propose a new evolution equation in the set of transport maps. It can be seen as the gradient flow of a lift of some user-chosen divergence (e.g., the KL divergence, or relative entropy) to the space of transport maps, \emph{constrained} to the convex set of \emph{optimal} transport maps. We prove the existence of long-time solutions to this flow as well as its convergence toward the OT map as time goes to infinity, under standard convexity conditions on the divergence.
    Second, we study the practical implementation of this constrained gradient flow. We propose two time-discrete computational schemes---one explicit, one implicit---, and we prove the convergence of the latter to the OT map as time goes to infinity.
    We then parameterize the OT maps with convexity-constrained neural networks and train them with these discretizations of the constrained gradient flow. We show that this is equivalent to performing a natural gradient descent of the lift of the chosen divergence in the neural networks' parameter space, similarly to drifting generative models.
    Empirically, our scheme outperforms the standard Euclidean gradient descent methods used to train convexity-constrained neural networks in terms of approximation results for the OT map and convergence stability, and it still yields better results than the same approach combined with the widely used \textsc{adam} optimizer.
    
\vspace{2mm}
\noindent\textsc{Keywords.} optimal transportation $\cdot$ Monge problem $\cdot$ drifting generative models $\cdot$ gradient flow $\cdot$ Langevin diffusion $\cdot$ natural gradient
    
\vspace{2mm}
\noindent\textsc{Mathematics Subject Classification.} 49Q22 $\cdot$ 49Q10 $\cdot$ 90C26 
\end{abstract}

\maketitle
\setcounter{tocdepth}{2}
{
    \hypersetup{linkcolor=black}
    \tableofcontents
}

\newpage

\section{Motivation and introduction}
\label{sec:intro}
\subsubsection*{Motivation} The usual paradigm in machine learning when using a neural network consists in optimizing some loss function directly on the parameter space. This approach is hindered by the non-convexity of the optimization landscape provided by the neural network parametrization, although appropriate architecture choices, such as residual neural networks~\cite{he2016deep,barboni2025understanding}, can partially mitigate these difficulties. However, for more involved settings such as generative adversarial networks, the optimization becomes even more challenging~\cite{goodfellow2020generative}. In such situations, one may design a time-continuous variational problem with global convergence guarantees and use it to \emph{guide} the optimization process, by iteratively training the neural networks to reproduce steps of a gradient descent scheme that discretizes the time-continuous problem. In this article, we explore this principle---which draws from natural gradient schemes~\cite{amari1998natural}---to learn optimal transport maps in the class of convexity-constrained neural networks. We introduce a globally converging gradient flow on the space of gradients of convex functions, and propose an efficient \emph{guiding} (or \emph{drifting}) scheme to obtain approximate solutions to it.

\subsubsection*{A constrained gradient flow} Let $\rho_0$ and $\gamma$ be two probability measures with finite second-order moment. {Optimal transport (OT) maps} between $\rho_0$ and $\gamma$, should they exist, are defined as minimizers of $T\mapsto\int_{\bR^d}\|x-T(x)\|^2\dd\rho_0(x)$
among all elements $T$ of $\LrhozRd$ such that $T_*\rho_0=\gamma$ (see \cref{sec:OT} for details). 
If $\rho_0$ is absolutely continuous, the celebrated \nameref*{thm:brenier}~\cite{brenier1987decomposition} guarantees that there exists a unique OT map between $\rho_0$ and $\gamma$, and that it belongs to the set of gradients of convex functions
\begin{equation}
    \label{eq:K-intro}
    \K\coloneqq\{\nabla\phi\mid\phi\in \dHrhozRd\text{ is convex}\}\subset \LrhozRd.
\end{equation}
Finding the optimal transport map between $\rho_0$ and $\gamma$ therefore amounts to finding $T \in \K$ such that $T_*\rho_0=\gamma$. As a proxy for evaluating the discrepancy between $T_*\rho_0$ and $\gamma$, one could use some divergence $D:\PRd^2\to\bR$, and the problem then boils down to finding
\begin{equation}
    \label{eq:min-D-intro}
    \optmap{\rho_\smallzero}{\gamma}\in\argmin_{\smash{T\in\K}}D(T_*\rho_0\midd\gamma),
\end{equation}
and we write $F:T\mapsto D(T_*\rho_0\midd\gamma)$ this functional to minimize.
Akin to the standard setup of minimizing a functional on a subset of some ambient Hilbert space, it therefore seems reasonable to consider the \emph{gradient flow} of $F$ in $\LrhozRd$ \emph{constrained} to the set $\K$ of optimal transport maps, and hope that suitable convexity conditions on $D$ guarantee its convergence to $\optmap{\rho_\smallzero}{\gamma}$.
Formally, this flow reads 
\begin{equation}
    \label{eq:constrained-flow-intro}
    \partial_tT_t=\proj_{\Tan_{T_t}\!\!\K}(-\nabla F(T_t))
\end{equation}
with $T_0=\id$ and where $\smash{\proj_{\Tan_{T_t}\!\!\K}}$ is the projection onto the (convex) tangent cone of $\K$ at $T_t\in\K$.
We refer to \cref{eq:constrained-flow-intro} as a \emph{constrained gradient flow}.
This flow, should it converge toward $\optmap{\rho_\smallzero}{\gamma}$ with a rate that does not depend on the ambient dimension $d$, would yield a method for estimating OT maps, usable in high-dimensional settings.
In this work, we focus on providing theoretical guarantees on this approach, as well as a computational proof-of-concept of its soundness, using neural networks to parameterize the set $\K$ of gradients of convex functions, with an emphasis on the particular case of the \emph{relative entropy} with respect to some \emph{log-concave} measure.

\subsection{Contributions and outline}

Although the estimation of optimal transport maps is well-explored in low dimensions, the curse of dimensionality appears in higher dimensions, which can be circumvented by paying a higher computational cost.
As far as we are aware, existing approaches reconcile neither statistical guarantees nor computational tractability in high dimensions.
Motivated by the use of neural networks to generate OT maps, we study their estimation using infinite-time limits of \emph{constrained} gradient flows \cref{eq:constrained-flow-intro} in the space of \emph{optimal} transport maps.

On the theoretical side, our main contributions are \cref{thm:flow-defini} and \cref{thm:convergence}, which can be summarized as follows in the particular case of the relative entropy as a functional of choice (answering a question from \citeauthor{modin2016geometry}~\cite[Section 4.1.1]{modin2016geometry}).
\begin{theorem*}[\cref{thm:flow-defini,thm:convergence}, particular case of the relative entropy -- Existence of solutions and convergence for the constrained gradient flow]
    Let $\rho_0\in\PRdac$ be some absolutely continuous probability measure, let $\gamma\in\PRd$ be some strongly-log-concave probability measure, and let $D:\PRd\to\bR$ be the relative entropy with respect to $\gamma$. Then the constrained gradient flow \cref{eq:constrained-flow-intro} admits a solution of time-regularity $H^1$ and it converges exponentially fast to the OT map between $\rho_0$ and $\gamma$, with convergence rate independent of the ambient dimension.
\end{theorem*}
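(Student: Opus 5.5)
The plan is to treat \cref{eq:constrained-flow-intro} as the gradient flow of a proper, lower semicontinuous, geodesically convex functional on the Hilbert space $\LrhozRd$. The key observation is that, restricted to the closed convex set $\K$, the map $T\mapsto T_*\rho_0$ is an isometry onto $(\PRd,\W)$. Indeed, for $T,S\in\K$ the coupling $(T,S)_*\rho_0$ is optimal between $T_*\rho_0$ and $S_*\rho_0$, since it is supported on the graph of $\nabla\psi\circ(\nabla\phi)^{-1}$, which is cyclically monotone. Moreover, the segment $(1-s)T+sS$ stays in $\K$ and is pushed forward to a generalized geodesic based at $\rho_0$. I would therefore set $\mathcal F(T)=F(T)+\iota_{\K}(T)$ on $\LrhozRd$. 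Strong log-concavity of $\gamma$ with constant $\lambda>0$ makes the relative entropy $\lambda$-convex along generalized geodesics (Ambrosio--Gigli--Savaré). Hence $\mathcal F$ is $\lambda$-convex in the Hilbert sense on $\LrhozRd$, because $\|(1-s)T+sS\|^2$ interpolation in $\Lrhoz$ matches the generalized-geodesic distance bound. Lower semicontinuity comes from two facts: $\K$ is strongly and hence weakly closed, and $T\mapsto T_*\rho_0$ is continuous into $\W$, where the entropy is l.s.c.

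With this in hand, existence follows from the Brezis--Kōmura theory of maximal monotone operators. The subdifferential $\partial\mathcal F$ of a proper l.s.c.\ convex functional (plus a quadratic) on a Hilbert space generates a contraction semigroup. For $T_0=\id\in\K$ with $\mathcal F(\id)=\mathrm{KL}(\rho_0\midd\gamma)<\infty$, this gives a unique solution with $\partial_tT\in L^2(0,\infty;\Lrhoz)$, i.e.\ $H^1$ regularity in time. The solution satisfies $\partial_tT_t=-\partial^\circ\mathcal F(T_t)$, the minimal-norm subgradient. I then need to identify this with $\proj_{\Tan_{T_t}\K}(-\nabla F(T_t))$. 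Since $\partial\mathcal F(T)=\nabla F(T)+\Nor_T\K$ whenever $F$ is differentiable at $T$, Moreau's decomposition gives that the minimal-norm element of $-\nabla F-\Nor_T\K$ is exactly the projection onto the polar cone, which is $\Tan_T\K$. \textbf{Main obstacle:} $F$ is not Fréchet differentiable on all of $\Lrhoz$. I would handle it by computing the subdifferential of $\mathcal F$ along $\K$ via the Wasserstein subdifferential, $\nabla F(T)=\nabla\log\frac{\dd T_*\rho_0}{\dd\gamma}\circ T$, valid whenever the Fisher information is finite. The sum rule would then be justified by the slope characterization in AGS and by $\lambda$-convexity. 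This identification is where I expect most of the technical work to lie.

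Convergence is then standard for $\lambda$-convex Hilbert gradient flows. Let $T^*=\optmap{\rho_\smallzero}{\gamma}\in\K$ be the minimizer of $\mathcal F$, with $\mathcal F(T^*)=0$. Monotonicity of $\partial\mathcal F-\lambda\,\id$ yields
\begin{equation}
\tfrac{d}{dt}\tfrac12\|T_t-T^*\|^2_{\Lrhoz}\le-\lambda\|T_t-T^*\|^2_{\Lrhoz},
\end{equation}
so that $\|T_t-T^*\|_{\Lrhoz}\le e^{-\lambda t}\|\id-T^*\|_{\Lrhoz}$. The rate $\lambda$ is the log-concavity constant and is independent of $d$. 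The energy-dissipation inequality combined with $\lambda$-convexity also gives $F(T_t)\le e^{-2\lambda t}F(\id)$.
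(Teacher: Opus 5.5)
Your architecture matches the paper's. You work with $F_{\K}=F+\imath_\K$ on $\LrhozRd$. You get its $\lambda$-convexity from convexity of the entropy along generalized geodesics anchored at $\rho_0$ (this is \cref{lem:cvx-simple}; the defect term is exactly $\|T-S\|^2_{\Lrhoz}$). You get its l.s.c.\ from closedness of $\K$ and the bound $\W_2(T_*\rho_0,S_*\rho_0)\le\|T-S\|_{\Lrhoz}$. You identify $-\partial^\circ F_\K(T)$ with $\proj_{\Tan_T\K}(-\nabla F(T))$ by the sum rule and Moreau's decomposition, as in \cref{lem:2flows}.

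\textbf{The key observation is false for $d\ge2$.} It holds for $d=1$, but a composition $\nabla\psi\circ(\nabla\phi)^{-1}$ of gradients of convex functions is not cyclically monotone in general. Take $\rho_0=N(0,I_d)$ and $T=A$, $S=B$ symmetric positive definite with $AB\neq BA$. Then $(T,S)_*\rho_0$ lives on the graph of the non-symmetric linear map $BA^{-1}$, and
\[
\W_2(T_*\rho_0,S_*\rho_0)^2=\operatorname{tr}A^2+\operatorname{tr}B^2-2\operatorname{tr}\big((AB^2A)^{1/2}\big)<\operatorname{tr}\big((A-B)^2\big)=\|T-S\|^2_{\Lrhoz},
\]
since $\operatorname{tr}\big((AB^2A)^{1/2}\big)$ is the nuclear norm of $AB$, which exceeds $\operatorname{tr}(AB)$ when $AB$ is not symmetric. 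On $\K$ the pushforward is only $1$-Lipschitz; equality is the restrictive compatibility condition mentioned in \cref{sec:relatedworks}.

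Your convexity and l.s.c.\ steps do not use the claim. The identification step as you sketch it does: you read off $\partial F_\K$ and its slope from the Wasserstein subdifferential and slope of $D$. The Wasserstein subdifferential inequality is formulated with optimal plans. So it does not give $F(S)-F(T)\ge\langle\nabla F(T),S-T\rangle_{\Lrhoz}+\frac\lambda2\|S-T\|^2_{\Lrhoz}$ along the non-optimal coupling $(T,S)_*\rho_0$.

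The paper bridges this with \cref{lem:frechet-diff} (Gangbo--Tudorascu). Wasserstein differentiability of $D$ at $T_*\rho_0$ gives Fr\'echet differentiability of $F$ at $T$ along arbitrary $\Lrhoz$-perturbations, with $\nabla F(T)=\nablaa D(T_*\rho_0)\circ T$. Your worry that the entropy is not differentiable in that sense is legitimate. The paper does not go beyond putting differentiability into \cref{eq:assumption-diff} and asserting it in \cref{rem:functionals}. For the entropy you would need two things: an anchored (strong) subgradient inequality along the segments $(1-s)T+sS$, and a directional-derivative computation for the reverse inclusion $\partial F_\K(T)\subset\nabla F(T)+\Nor_T\K$.

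\textbf{The remaining steps are valid and differ from the paper only in route.}

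\emph{Existence.} The paper runs the proximal scheme \cref{eq:JKO} and invokes Rossi--Savar\'e, obtaining a solution along a subsequence $\tau_\ell\to0$; that construction is reused in \cref{prop:conv-to-continuous}. Your Brezis--K\={o}mura argument is legitimate, since $\lambda>0$ makes $F_\K$ convex. It additionally gives uniqueness, and that the right derivative equals $-\partial^\circ F_\K(T_t)$ for every $t>0$. Both routes need $H(\rho_0\midd\gamma)<\infty$ for $H^1$ regularity. This is an extra assumption, not a consequence of $\rho_0\in\PRdac$, and your proposal takes it for granted.

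\emph{Convergence.} The paper argues on any solution of \cref{eq:constrained-flow} through the projection inequalities of \cref{lem:charac}. It uses only $\lambda$-star-convexity around the OT map $T^\star$, derives a Polyak--\L{}ojasiewicz inequality, and applies Gr\"onwall to the values. Midpoint convexity then gives $\|T_t-T^\star\|^2_{\Lrhoz}\le\frac4\lambda e^{-2\lambda t}H(\rho_0\midd\gamma)$. Your contraction argument from $\lambda$-monotonicity of $\partial F_\K$ needs full $\lambda$-convexity on $\K$, which is available here. It yields $\|T_t-T^\star\|_{\Lrhoz}\le e^{-\lambda t}\W_2(\rho_0,\gamma)$, a bound that does not involve the initial entropy. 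Your estimate on the values coincides with the paper's.
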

\noindent
We stress that our constrained gradient flows are \emph{not} simply lifts of the standard Wasserstein gradient flows to the space of transport maps, and that their exponential convergence toward the actual OT map between $\rho_0$ and $\gamma$ is therefore new and non-trivial.

Motivated by the theoretical convergence result, we study two time-discrete numerical schemes (one explicit, one implicit) that discretize the (time-continuous) constrained gradient flow \cref{eq:guided-flow-intro}.
The implicit scheme is shown to converge to the OT map (\cref{prop:conv-tau-fixed}), and recovers the continuous scheme \cref{eq:constrained-flow-intro} as the time step goes to zero (\cref{prop:conv-to-continuous}).
We provide a numerical proof-of-concept of the efficiency of those two schemes by parameterizing the set of gradients of convex functions with some convexity-constrained neural network $\theta\mapsto T_\theta$, and we observe that they allow one to reach near-optimal parameters (i.e., to be very close to finding the actual OT maps) significantly more often than the standard convexity-constrained descent schemes.
Although this was expected for the implicit scheme given its good convergence properties, our numerical findings also apply to the explicit one, even in the case of a non-smooth functional such as the entropy; this suggests a possible implicit regularization introduced by the neural networks.

Finally, we note that the constrained gradient flow \cref{eq:constrained-flow-intro} written over a parameterization $\theta\mapsto T_\theta$ of the set $\K$ of OT maps reads
\begin{equation}
    \label{eq:guided-flow-intro}
    \partial_t\theta_t\in\argmin_{\delta\theta\in \Tan_{\theta_t}\!\!\Theta} \int_{\bR^d}\big\|-\nabla F(T_{\theta_t})-\nabla_\theta T_{\theta_t}.\delta\theta\big\|^2 \dd\rho_0,
\end{equation}
where $F:T\mapsto D(T_*\rho_0)$.
We prove the following result, that relates this flow to the family of \emph{natural gradient flows}, known to have good re-parameterization invariance properties. 

\begin{proposition*}[\cref{cor:natural-gf} -- The parameterized constrained gradient flow is a natural gradient flow]
    Let $\Theta\subset\bR^m$ be some parameter space and let $\Theta\ni\theta\mapsto T_\theta$ be a parameterization of a subset of $\K$, differentiable and of injective differential. 
    Let $D:\PRd\to\bR$ be some differentiable functional. Then the parameterized constrained gradient flow \cref{eq:guided-flow-intro} on $\Theta$ is the {natural gradient flow} of $F:T\mapsto D(T_*\rho_0)$ with respect to the $\Lrhoz\!$-metric and the mapping $\theta\mapsto T_\theta$.
\end{proposition*}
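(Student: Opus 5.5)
We prove the proposition by expanding the quadratic objective in \cref{eq:guided-flow-intro} and reading its minimizer as a preconditioned gradient. Write $J_\theta\coloneqq\nabla_\theta T_\theta:\bR^m\to\LrhozRd$ for the differential of the parameterization at $\theta$; it is linear and, by assumption, injective. Let $G(\theta)\coloneqq J_\theta^*J_\theta\in\bR^{m\times m}$ be the pull-back of the $\Lrhoz$ metric, i.e. $G(\theta)_{ij}=\int\langle\partial_{\theta_i}T_\theta,\partial_{\theta_j}T_\theta\rangle\dd\rho_0$. This is exactly the metric used to define the natural gradient associated with the $\Lrhoz$ geometry and the map $\theta\mapsto T_\theta$. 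Injectivity of $J_\theta$ gives that $G(\theta)$ is symmetric positive definite, hence invertible.

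We first handle the case where $\theta_t$ is an interior point of $\Theta$, so that $\Tan_{\theta_t}\Theta=\bR^m$. Expanding the objective gives
\begin{equation}
\int\big\|\nabla F(T_\theta)+J_\theta\delta\theta\big\|^2\dd\rho_0=\|\nabla F(T_\theta)\|^2_{\Lrhoz}+2\langle J_\theta^*\nabla F(T_\theta),\delta\theta\rangle+\langle G(\theta)\delta\theta,\delta\theta\rangle,
\end{equation}
which is a strictly convex quadratic in $\delta\theta$. Its unique minimizer is
\[
\delta\theta=-G(\theta)^{-1}J_\theta^*\nabla F(T_\theta).
\]
By the chain rule, with $\tilde F(\theta)\coloneqq F(T_\theta)$ and $F$ differentiable because $D$ is, we have $\nabla_\theta\tilde F(\theta)=J_\theta^*\nabla F(T_\theta)$, where $\nabla F$ is the $\Lrhoz$ gradient. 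Hence \cref{eq:guided-flow-intro} reads $\partial_t\theta_t=-G(\theta_t)^{-1}\nabla_\theta\tilde F(\theta_t)$, which is by definition the natural gradient flow. Equivalently, $J_\theta\,\partial_t\theta$ is the $\Lrhoz$-orthogonal projection of $-\nabla F(T_\theta)$ onto the range of $J_\theta$. This links the statement to the projection form \cref{eq:constrained-flow-intro}.

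If $\Theta$ has a boundary and the tangent cone is a proper convex cone, the same computation still applies. The minimization becomes a projection in the metric $G(\theta)$ of the unconstrained natural gradient onto $\Tan_{\theta}\Theta$, and that is the constrained natural gradient. It is unique because the cone is closed and convex and $G$ is positive definite.

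The only delicate point is checking that $F$ is differentiable in the $\Lrhoz$ sense, so that the chain rule holds with $J_\theta^*$ as the adjoint. This should follow from the earlier-stated differentiability of $D$ along pushforwards, via the formula $\nabla F(T)=\nabla_{\W}D(T_*\rho_0)\circ T$. Everything else is finite-dimensional linear algebra.
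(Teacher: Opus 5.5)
Your argument is correct and is essentially the paper's. Both expand the square, recognize $\|J_\theta\delta\theta\|^2_{\Lrhoz}$ as the pullback metric and the linear term as $d_\theta(F\circ T_{\cdot})$ via $\nabla F(T)=\nablaa D(T_*\rho_0)\circ T$ (\cref{lem:frechet-diff}), and conclude that the minimizer is the negative gradient for the pullback metric. The paper does this coordinate-free in \cref{prop:natural-gd}, whereas you use the explicit Gram matrix $G(\theta)$ (recovering \cref{eq:guided-explicit}); your boundary-case remark, identifying the minimizer as the $G(\theta)$-projection of the natural gradient onto $\Tan_\theta\Theta$, is a correct extension that the paper does not address.
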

\noindent
This last result sheds light on the good computational behavior that our schemes exhibit compared to the standard convexity-constrained descent approaches. As an aside, it also holds for any parameterization of (a subset of) the whole set $\LrhozRd$ of transport maps (\cref{rem:unconstrained}), hinting at the link between drifting models and natural gradient descent schemes.

In closing, we stress that this work does not aim at pushing the numerical state of the art of the estimation of OT maps, but rather at proposing a new method with strong theoretical convergence guarantees.
In that respect, a statistical study of our method would be of great interest; this is left for future work.

\addtocontents{toc}{\protect\setcounter{tocdepth}{0}}
\subsection*{Outline}
This work is organized as follows.
The rest of \cref{sec:intro} is dedicated to providing some background on the literature on learning OT maps (\cref{sec:relatedworks}) and on the technical tools used in this work (\cref{sec:background}).
\addtocontents{toc}{\protect\setcounter{tocdepth}{2}}

\noindent
\cref{sec:theory} provides a theoretical study of the constrained gradient flow.
In \cref{sec:constrained-gf}, we define the flow and establish a few useful results on the structure of the set of OT maps.
In \cref{sec:existence}, we establish the existence of long-time solutions for this constrained gradient flow, while in \cref{sec:convergence} we prove its global convergence toward the OT map under standard convexity assumptions on the functional $D$; assumptions which cover the central case of the relative entropy with respect to some log-concave measure. 

\noindent
\cref{sec:guided} studies the practical implementation of the (time-continuous) constrained gradient flow, via two time-discrete numerical schemes (one explicit, one implicit).
In \cref{sec:unparam}, we show that under standard convexity assumptions on $D$, the implicit scheme converges to the OT map as time goes to infinity, and that it also converges to the time-continuous constrained gradient flow as the time step goes to zero, given a fixed time horizon.
\cref{sec:procedure} formulates the explicit and implicit schemes under the parameterization of $\K$ by neural networks that implement the convexity constraint of the transport maps.
Those schemes are then shown to be discretizations of a \emph{natural gradient flow} in the space of parameters in \cref{sec:natural-gd}. Finally, \cref{sec:numerical} provides a numerical proof-of-concept of the efficiency of our methods to learn OT maps using convexity-constrained neural networks.

\subsection{Related works}
\label{sec:relatedworks}

\subsubsection*{Estimating OT maps in high dimension}
In low dimensions, standard methods for estimating the OT map, such as semi-discrete \cite{kitagawa2019convergence} or discretization of the Monge--Ampère operator \cite{benamou2016monotone,bonnet2022monotone} lead to fast and accurate solutions.
Yet, the estimation of OT maps faces the curse of dimensionality and these methods are not practical in higher dimensions, for which there is still room for improvement.
A standard method to circumvent this consists in reducing the search space \cite{hutter2021minimax} and solving a variational formulation, which we detail now.

\noindent
Finding the OT map amounts to finding a map $T$ that satisfies two conditions.
First, $(i)$ \emph{$T$ must push $\rho_0$ onto $\gamma$}. Implementing this as a hard constraint is difficult in practice \cite{korotin2021neural,uscidda2023monge} and this paper fits in a line of works that focuses on relaxing it using a penalization term of the form $T\mapsto D(T_*\rho_0\midd\gamma)$, where $D$ is some divergence on $\PRd$ \cite{lu2020large,xie2019scalable,bousquet2017optimal,balaji2020robust,uscidda2023monge}, which greatly facilitates the optimization procedure.
Second, $(ii)$ \emph{$T$ must be optimal}, that is, of minimal transport cost. This condition has been used as a soft constraint, using either the primal \cite{leygonie2019adversarial,liu2021learning,lu2020large}, semi-dual \cite{divol2025optimal,vacher2022parameter,muzellec2024near} or dual \cite{makkuva2020optimal,seguy2017large} formulation of the OT problem, allowing for some degree of sub-optimality of the learned transport map. Yet, in some cases, one might want to enforce the optimality exactly \cite{ait2003nonparametric,varian1982nonparametric,kuosmanen2008representation,chetverikov2018econometrics}, and this is the point of view we adopt in this work.
In practice, one may parameterize the set of OT maps and try to find $T$ minimizing the penalization term mentioned above.
Linear parameterizations introduce computational difficulties in high dimensions \cite{mirebeau2014adaptiveanisotropichierarchicalcones}.
This can be mitigated by using (convexity-constrained) neural networks:
Input Convex Neural Networks (ICNNs) have attracted a lot of attention in the recent years \cite{amos2017input,richter2021input,gagneux2025convexity,bunne2022supervised}, while other expressive parameterizations such as Log-Sum-Exp (LSE) networks \cite{calafiore2019log} or Max-Affine models \cite{ghosh2021max} seem to be less used in practice.
Although neural networks can be shown to be expressive enough \cite{barron1994approximation}, using them comes at the price of non-linearity, which implies that standard gradient flows may converge to spurious local minima.
The method we present in this work seamlessly adapts to any of these parameterization choices.
See also \cite{hurault2023convergent,chaudhari2023learning,saremi2019approximating} for learning gradients of convex functions, and \cite{korotin2022neural,korotin2021neural,amos2022amortizing,vesseron2024neural,fan2021scalable,drygala2025learning,chaudhari2025gradnetot} to do so in the specific context of finding OT maps.

\subsubsection*{Flows and curves for finding OT maps}
Our method consists in lifting some divergence on $\PRd$ (e.g., the relative entropy) to the space of transport maps and performing its constrained gradient flow to the subset of \emph{optimal} transport maps. This has been suggested by \citeauthor{modin2016geometry} in \cite[Section 2.2.3]{modin2016geometry}, with details and numerical experiments in the finite-dimensional particular case of Gaussian measures. Our point of view is to use the cone structure of the set of OT maps to define our flow, benefiting from well-known results for the convergence of gradient flows of convex functionals on Hilbert spaces \cite{de1993new,rossi2006gradient}. This method also relates to \cite{jiang2025algorithms}, where flows are performed on a subset of the set of OT maps satisfying the very restrictive condition of \emph{compatibility}~\cite{boissard2015distribution},
or, in a finite-dimensional setting, to the literature on gradient flows constrained to submanifolds of Euclidean spaces \cite{hauswirth2016projected,alvarez2004hessian}.
See also \cite{angenent2003minimizing,modin2016geometry,morel2022turning,vesseron2024neural} for methods aiming to improve the optimality of a learned sub-optimal transport map.
One may also mention the method of continuity \cite{de2014monge,gonzalez2024linearization}, where an OT map is obtained by solving the linearization of the Monge--Ampère equation.

\subsubsection*{Link with drifting generative models}
Independently of our work, drifting models \cite{deng2026generative,cao2026gradient} have recently been introduced for generative modeling. These methods consist in performing the gradient flow of a modified Maximum Mean Discrepancy (MMD) via a natural gradient descent scheme on the space of maps, in a way that is closely related to \cref{eq:guided-flow-intro}, but without the convexity constraint inherent to our approach, as we specifically seek for OT maps.
Their proposed optimization method corresponds to the explicit scheme we introduce in \cref{sec:guided}. Our numerical method differs by the use of convexity-constrained neural networks, which enforces the optimality constraint.

\addtocontents{toc}{\protect\setcounter{tocdepth}{0}}
\subsection*{Notation}
In this work, we adopt the following notation.
\addtocontents{toc}{\protect\setcounter{tocdepth}{2}}

\noindent \emph{Optimal transport.}
Let $d\geq1$.
\begin{itemize}[leftmargin=7mm]
    \item $\PRd$ is the set of probability measures on $\bR^d$ {with finite second-order moment}.
    \item If some $\rho\in\PRd$ is {absolutely continuous} with respect to some $\gamma\in\PRd$, we denote by $\frac{\dd \rho}{\dd \gamma}$ the corresponding Radon--Nikodym derivative.
    \item $\PRdac$ is the subset of $\PRd$ of {absolutely continuous measures} with respect to the $d$-dimensional Lebesgue measure, which we write $\dd x$.
    \item The {pushforward} $T_*\rho$ of some $\rho\in\PRd$ by some measurable map $T:\bR^d\to\bR^d$ is the probability measure defined on Borel sets $A$ by $T_*\rho(A)\coloneqq \rho(T^{-1}(A))$.
    \item $\LrhoRd$ is the Hilbert space of measurable functions $T:\bR^d\to\bR^d$ that are squared-integrable with respect to some $\rho\in\PRd$, endowed with its norm $\smash{\|\cdot\|_{\Lrho}}$ and scalar product $\smash{\langle\cdot,\cdot\rangle_{\Lrho}}$; $\dHrhoRd$ is the space of functions whose distributional derivative is in $\LrhoRd$.
    \item The optimal transport map between some $\rho$ and $\gamma$ in $\PRd$ is written $\optmap{\rho}{\gamma}\in \LrhoRd$.
    \item $\div$ denotes the divergence (see \cref{sec:divergence} for a definition).
\end{itemize}

\noindent \emph{Riemannian geometry.}
Let $M$ be a Riemannian manifold with Riemannian metric $g$ ($M$ can be infinite-dimensional, with a strong metric \cite{schmeding2022introduction}).
\begin{itemize}[leftmargin=7mm]
    \item The metric $g$ induces on the tangent space $T_pM$ at some $p\in M$ a scalar product, hence a norm, that we write $\langle\cdot,\cdot\rangle_g$ and $\|\cdot\|_g$.
    \item The differential of some functional $\ell:M\to\bR$ at some $p\in M$ is written $d_p\ell\in T_p^*M$,
and its Riemannian gradient $\grad^g_M\ell(p)$ is defined as the unique element of $T_pM$ such that $g_p(\grad^g_M \ell(p),\cdot)=d_p\ell[\cdot]$.
\end{itemize}

\subsection{Technical background}
\label{sec:background}
In this section, we review various preliminaries in optimal transport (\cref{sec:OT}), as well as in convex analysis in Hilbert spaces (\cref{sec:diff-hilbert}) and in the Wasserstein space $\PRd$ (\cref{sec:diff-wass}). Some additional notions can also be found in \cref{appendix:omitted_notions}.

\subsubsection{Optimal transport and optimal transport maps}
\label{sec:OT}
Let $\rho_0,\gamma\in \PRd$ be two probability measures on $\bR^d$ (with finite second-order moment).
The (Monge) \emph{optimal transport (OT) cost} between $\rho_0$ and $\gamma$ is defined as \cite{monge1781memoire,villani2009optimal,santambrogio2015optimal,peyre2019computational}
\begin{equation}
    \label{eq:monge}
    \tag{\textsc{ot}}
    \operatorname{OT}_{\text{Monge}}(\rho_0,\gamma)^2=\inf_{T\in\cT(\rho_0,\gamma)}\int_{\bR^d}\|x-T(x)\|^2\dd\rho_0(x), 
\end{equation}
where $\cT(\rho_0,\gamma)$ is the set of \emph{transport maps}, that is, measurable maps $T\in\LrhozRd$ such that the pushforward measure $T_*\rho_0$ is equal to $\gamma$. 
A solution $\optmap{\rho_\smallzero}{\gamma}$ to \cref{eq:monge} is called an \emph{optimal transport map}, or \emph{Monge map}. 
However, \cref{eq:monge} might not admit a solution, and the set $\cT(\rho_0,\gamma)$ may be empty. 
One may relax the Monge problem \cref{eq:monge} to that of \citeauthor{kantorovich1942translocation}~\cite{kantorovich1942translocation}, which serves as the usual definition of the well-known \emph{Wasserstein distance}:
\begin{equation}
    \label{eq:kantorovitch}
    \tag{$\W_2$}
    \W_2(\rho_0,\gamma)^2=\min_{\pi\in\Pi(\rho_0,\gamma)}\iint_{\bR^d\times\bR^d}\|x-y\|^2\dd\pi(x,y), 
\end{equation}
where the minimization is done over the set $\Pi(\rho_0,\gamma)$ of probability measures $\pi\in\PRdRd$ admitting $\rho_0$ and $\gamma$ as marginals.
Such elements $\pi$ are called \emph{transport plans}, and a solution to \cref{eq:kantorovitch} is called an \emph{optimal transport plan}, their set being denoted by $\Pi_\text{o}(\rho,\gamma)$. 
Transport maps are a special case of transport plans, namely, plans of the form $(\id,T)_*\rho_0$. 
The Wasserstein distance makes $\PRd$ a metric space, and metrizes \emph{weak convergence} in $\PRd$ (denoted by $\rho_n\rightharpoonup\rho$ in this work), that is, \emph{narrow convergence} (convergence against bounded continuous test functions) together with convergence of the second-order moments \cite[Theorem~6.9]{villani2009optimal}.

Under the assumption that $\rho_0$ has a density with respect to the Lebesgue measure, one can ensure the existence and uniqueness of an optimal transport plan, and guarantee that it is actually induced by a map, as shown by the following celebrated theorem of \citeauthor{brenier1987decomposition}~\cite{brenier1987decomposition}:
\begin{theorem*}[Brenier's theorem] 
    \label{thm:brenier}
    Let $\rho_0\in\PRdac$ be an absolutely continuous probability measure. Then for any $\gamma\in\PRd$ there exists a solution to the \cref{eq:kantorovitch} problem, it is unique (up to a set of $\rho_0$-measure zero), and it is induced by a map $\optmap{\rho_\smallzero}{\gamma}:\bR^d\to\bR^d$ which is the unique (up to a set of $\rho_0$-measure zero) gradient of a convex function $\phi:\bR^d\to\bR$ pushing $\rho_0$ onto $\gamma$.
\end{theorem*}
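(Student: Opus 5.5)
The plan is the classical route: Kantorovich duality, then a cyclical-monotonicity characterization of the optimal plan's support, and finally differentiability of convex functions almost everywhere, which is where absolute continuity of $\rho_0$ enters.

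\emph{Existence of an optimal plan.} The set $\Pi(\rho_0,\gamma)$ is tight, since both marginals are tight, and it is weakly closed, so it is weakly compact by Prokhorov's theorem. The cost $\pi\mapsto\iint\|x-y\|^2\dd\pi$ is lower semicontinuous for narrow convergence, because the integrand is continuous and bounded below. It is also finite on $\Pi(\rho_0,\gamma)$, because $\|x-y\|^2\le 2\|x\|^2+2\|y\|^2$ and both measures have finite second moments. Hence a minimizer $\pi^\star$ exists.

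\emph{Structure of the support.} Expand $\|x-y\|^2=\|x\|^2+\|y\|^2-2\langle x,y\rangle$. The first two terms integrate to constants fixed by the marginals, so minimizing the cost is equivalent to maximizing $\iint\langle x,y\rangle\dd\pi$. I would then show that $\operatorname{supp}\pi^\star$ is cyclically monotone: for every finite family $(x_i,y_i)_{i\le n}$ in the support and every permutation $\sigma$,
\begin{equation}
\sum_i\langle x_i,y_i\rangle\ge\sum_i\langle x_i,y_{\sigma(i)}\rangle .
\end{equation}
The argument is by contradiction. If the inequality failed, one could perturb $\pi^\star$ by removing small mass near the pairs $(x_i,y_i)$ and reinserting it near $(x_i,y_{\sigma(i)})$. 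This keeps both marginals fixed and strictly increases the correlation, contradicting optimality.

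\emph{Convex potential.} Rockafellar's theorem then gives a proper lower semicontinuous convex $\phi$ with $\operatorname{supp}\pi^\star\subset\partial\phi$, built explicitly as
\begin{equation}
\phi(x)=\sup\Big\{\langle x-x_n,y_n\rangle+\dots+\langle x_1-x_0,y_0\rangle\Big\},
\end{equation}
where the supremum runs over finite chains in the support starting from a fixed point $(x_0,y_0)$. The main obstacle is showing that $\phi$ is finite $\rho_0$-a.e. and not just somewhere. Since $\pi^\star$ has first marginal $\rho_0$, $\rho_0$-a.e. $x$ has some $y$ with $(x,y)\in\partial\phi$, so $\rho_0$ is concentrated on $\operatorname{dom}\partial\phi\subset\operatorname{dom}\phi$. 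As $\rho_0\ll\dd x$ and the boundary of the convex set $\operatorname{dom}\phi$ is Lebesgue-null, $\rho_0$ is concentrated on the interior of $\operatorname{dom}\phi$. There $\phi$ is locally Lipschitz, hence differentiable Lebesgue-a.e. by Rademacher's theorem (or Alexandrov's theorem), and so $\rho_0$-a.e. At every such point $\partial\phi(x)=\{\nabla\phi(x)\}$. Therefore $\pi^\star$ is concentrated on the graph of $\nabla\phi$, which means $\pi^\star=(\id,\nabla\phi)_*\rho_0$ and $(\nabla\phi)_*\rho_0=\gamma$.

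\emph{Uniqueness.} Let $\pi_1,\pi_2$ be two optimal plans. Their average $\tfrac12(\pi_1+\pi_2)$ is also optimal, because the problem is linear in $\pi$. By the previous step it is induced by a map, so it is a graph measure. But $\tfrac12\big((\id,T_1)_*\rho_0+(\id,T_2)_*\rho_0\big)$ can only be concentrated on a graph if $T_1=T_2$ $\rho_0$-a.e., so $\pi_1=\pi_2$.

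For uniqueness of the gradient of a convex function among transport maps, let $\nabla\psi$ be any gradient of a convex function with $(\nabla\psi)_*\rho_0=\gamma$. Its graph lies in $\partial\psi$, which is cyclically monotone. I would then argue, via the Kantorovich dual with potentials $(\psi,\psi^*)$ and Fenchel–Young equality on that graph, that the plan $(\id,\nabla\psi)_*\rho_0$ is optimal. Uniqueness of the optimal plan then forces $\nabla\psi=\nabla\phi$ $\rho_0$-a.e. This last step needs $\psi,\psi^*$ to be integrable, and I would secure that using the finite second moments together with the bounds $\psi(x)\ge\langle x,y_0\rangle-\psi^*(y_0)$ and its symmetric counterpart for $\psi^*$.
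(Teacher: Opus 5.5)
The paper gives no proof of this statement. It quotes it as a classical result due to Brenier, so there is no internal argument to compare against.

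\textbf{Comparison with Brenier's route.} Your proof is correct and complete in its essentials. It follows the McCann/Gangbo--McCann route:
\begin{itemize}
\item Prokhorov for existence;
\item cyclical monotonicity of the support of \emph{every} optimal plan;
\item Rockafellar's explicit potential;
\item Lebesgue-negligibility of the boundary of $\operatorname{dom}\phi$ plus a.e.\ differentiability, to get a map;
\item uniqueness of the plan by averaging two optimal plans.
\end{itemize}
Brenier's original argument runs through the dual instead. One first proves that $\int\phi\,d\rho_0+\int\phi^*d\gamma$ admits a minimizer over convex $\phi$. Then every optimal plan is concentrated on $\{\phi(x)+\phi^*(y)=\langle x,y\rangle\}=\operatorname{graph}\partial\phi$ for one and the same $\phi$, and one reads off the map. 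Your route avoids dual attainment altogether, which beyond the compactly supported case needs an extra compactness or approximation argument for the potentials. It only uses the easy half of duality (weak duality plus Fenchel--Young equality), and only in the final uniqueness-of-the-gradient step. The dual route, once attainment is established, delivers the optimality certificate, uniqueness of the plan and uniqueness of the gradient all from a single pair $(\phi,\phi^*)$.

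\textbf{Two small points.}

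First, in the last step the affine minorants only control $\psi^-$ and $(\psi^*)^-$. What actually closes the argument is the identity $\psi(x)+\psi^*(\nabla\psi(x))=\langle x,\nabla\psi(x)\rangle$ for $\rho_0$-a.e.\ $x$. Its right-hand side is in $L^1(\rho_0)$ because $|\langle x,\nabla\psi(x)\rangle|\le\frac12(|x|^2+|\nabla\psi(x)|^2)$ and $\gamma=(\nabla\psi)_*\rho_0$ has a finite second moment. This is where the moment assumption is really used, and it is worth stating explicitly.

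Second, the $\phi$ you construct is proper and l.s.c., with values in $\mathbb{R}\cup\{+\infty\}$, and is finite only on a convex set carrying $\rho_0$. That is the right form of the theorem: the literal requirement $\phi:\mathbb{R}^d\to\mathbb{R}$ in the statement cannot be met in general. Take $\rho_0$ uniform on $[0,1]$ and $\gamma=N(0,1)$ with distribution function $\Phi$. Any convex $\phi$ with $\phi'=\Phi^{-1}$ a.e.\ on $(0,1)$ has unbounded slopes near $0$ and $1$, so it must equal $+\infty$ outside $[0,1]$. Accordingly, the uniqueness clause should be read over extended-valued convex functions, which is how you treated $\psi$.
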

\noindent
As a direct consequence, if $\rho_0\in\PRdac$, the gradient $\nabla\phi\in\LrhozRd$ of any convex function $\phi:\bR^d\to\bR$ is the optimal transport map between $\rho_0$ and the pushforward measure $(\nabla\phi)_*\rho_0$.
In this work, we consider the case where $\rho_0$ is absolutely continuous; the search for an optimal transport between $\rho_0$ and some $\gamma\in\PRd$ therefore reduces to searching for an optimal transport \emph{map}.
A case of interest will also be that of a \emph{$(\lambda$-)log-concave} target measure $\gamma$, that is, a measure $\gamma$ with a Radon--Nikodym derivative with respect to the Lebesgue measure that writes $\smash{\frac{\dd\gamma}{\dd x}}=e^{-V}$, with $V:\bR^d\to\bR$ a ($\lambda$-)convex function. Every log-concave measure finite moments of all orders \cite[Appendix~B.1]{bobkov2019one}; in particular, every log-concave measure belongs to $\PRd$.

\noindent 
Let us conclude this section by noting that for any $T,S\in\LrhozRd$, the transport plan $(T,S)_*\rho_0$ has $T_*\rho_0$ and $S_*\rho_0$ as marginals; hence its sub-optimality for the optimization problem \cref{eq:kantorovitch} directly gives $\W_2(T_*\rho_0,S_*\rho_0)^2\leq\|T-S\|_{\Lrhoz}^2$.

\subsubsection{Differential calculus, gradient flows, and convexity in Hilbert spaces}
\label{sec:diff-hilbert}
Let $\cH$ be a Hilbert space and $F:\cH\to\bR$ some functional. 
The \emph{Fréchet subdifferential} $\partial ^-F(x)$ of $F$ at some $x\in\cH$ is defined as the set of $\xi\in \cH$ such that
\begin{equation}
    F(y)-F(x)\geq \langle \xi,y-x\rangle +o(\|y-x\|)\qquad\text{as $y\to x$.}
\end{equation}
Furthermore, we write $\partial^\circ F(x)$ the unique element of minimal norm of $\partial^- F(x)$.
The \emph{Fréchet superdifferential} of $F$ at $x$ is defined as $\partial^+F(x)\coloneqq-\partial^-(-F)(x)$. The functional $F$ is said to be \emph{differentiable} at $x\in\cH$ if $\partial^- F(x)\cap\partial^+F(x)$ is non-empty. 
In this case, the element of minimal norm in this set is called the \emph{Fréchet gradient} of $F$ at $x$ and written $\nabla F(x)$, and one has
\begin{equation}
    F(y)-F(x)= \langle \nabla F(x),y-x\rangle +o(\|y-x\|)\qquad\text{as $y\to x$.}
\end{equation}
A \emph{gradient flow} of a differentiable functional $F$ is a curve $u \in H^1([0,t_{\text{max}}],\cH)$ for some $t_{\text{max}} > 0$ such that $u_0 \in \cH$ and
\begin{equation}
    \dot u_t =- \nabla F(u_t) \qquad \text{ for a.e.~} t \in (0,t_{\text{max}}).
\end{equation}
If $K\subset \cH$ is some convex subset of the ambient Hilbert space $\cH$, then the \emph{gradient flow of $F$ constrained to $K$} is a curve $u\in H^1([0,t_{\text{max}}],\cH)$ for some $t_{\text{max}} > 0$ such that $u_0 \in K$ and
\begin{equation}
    \dot u_t =-\proj_{\Tan_{u_t}\!\!K} (\nabla F(u_t) ) \qquad \text{ for a.e.~} t \in (0,t_{\text{max}}),
\end{equation}
where $\Tan_{u_t}K$ is the tangent cone of $K$ at $u_t\in K$ and $\proj$ the usual projection onto convex sets (see \cref{sec:cones} for a definition of both).

\begin{remark}[Gradient flow and inner product]
\label{rem:gf_geometry}
Those gradient flows depend on the gradient on the Hilbert space $\cH$, hence on the choice of an inner product on $\cH$.
In \cref{sec:natural-gd}, we work with a gradient flow on some $\Theta\subset\bR^m$ for an inner product that differs from the Euclidean one. 
\end{remark}
\noindent
While the existence, uniqueness, and asymptotic behavior of gradient flows in not trivial in general \cite{rossi2006gradient}, things become much easier if $F$ exhibits some \emph{convexity} properties \cite[Section\! 1.4]{ambrosio2008gradient}.
Namely, $F$ is said to be \emph{$\lambda$-convex} for $\lambda \in \bR$ on a convex subset $K\subset\cH$ if for all $x,y\in K$ and all $t\in[0,1]$,
\begin{equation}
    \label{eq:cvx}
    F((1-t)x+ty)\leq (1-t)F(x)+tF(y)-\frac\lambda2t(1-t)\|x-y\|^2.
\end{equation}
Eventually $F$ is said to be $\lambda$-\emph{star-convex} on $K$ \emph{around $x^\star \in \cH$} if the previous inequality is true for all $x\in K$ and $y=x^\star$.

\subsubsection{Differential calculus, gradient flows, and convexity in \texorpdfstring{$\PRd$}{P2(Rd)}}
\label{sec:diff-wass}
Since $\PRd$ does not enjoy a linear structure, the notions of the previous subsection do not apply faithfully; yet, they can be adapted and will play an important role in this work.
See for instance \cite[Definitions~2.11 and~2.12]{bonnet2019pontryagin}, \cite[Chapter~10]{ambrosio2008gradient} or \cite[Definition~2.1]{chow2019partial}.
The \emph{Wasserstein subdifferential} $\partial^- D(\rho)$ of a functional $D : \PRd \to \bR$ at $\rho\in\PRd$ is defined as the set of $\xi\in\LrhoRd$ such that
\begin{equation}
    D(\gamma)-D(\rho)\geq \inf_{\pi\in\Pi_\text{o}(\rho,\gamma)}\iint_{\bR^d\times\bR^d}\langle \xi(x),y-x\rangle\dd\pi(x,y)+o(\W_2(\rho,\gamma))\qquad\text{as $\gamma\rightharpoonup\rho$.}
\end{equation}
Furthermore, we write $\partial^\circ D(\rho)$ the (unique) element of minimal norm of $\partial^- D(\rho)$.
The \emph{Wasserstein superdifferential} of $D$ at $\rho$ is defined as $\partial^+D(\rho)=-\partial^- (-D)(\rho)$. 
The functional $D$ is then said to be \emph{Wasserstein differentiable} at $\rho\in\PRd$ if $\partial^- D(\rho)\cap\partial^+D(\rho)$ is non-empty. In this case, the element of minimal norm in this set is called the 
\emph{Wasserstein gradient} of $D$ at $\rho$ and written $\nablaa D(\rho) \in\LrhoRd$, and one has
\begin{equation}
    \label{eq:gradient-W}
    D(\gamma)-D(\rho)= \iint_{\bR^d\times\bR^d}\langle \nablaa D(\rho)(x),y-x\rangle\dd\pi_\gamma(x,y)+o(\W_2(\rho,\gamma))\qquad\text{as $\gamma\rightharpoonup\rho$,}
\end{equation}
for all selections $(\pi_\gamma)_\gamma$ of the family of sets $(\Pi_\text{o}(\rho,\gamma))_\gamma$.
When it exists, the Wasserstein gradient belongs to the \emph{Wasserstein tangent space} \cite[Definition~8.4.1]{ambrosio2008gradient}
\begin{equation}\label{eq:tangent_space_PRd}
    \operatorname{Tan}_\rho\PRd=\overline{\{\nabla\phi\mid\phi\in C^\infty_c(\bR^d)\}}^{\Lrho},
\end{equation}
see for instance \cite[Theorem~3.10, Definition~3.11]{gangbo2019differentiability} and \cite[Proposition~8.5.4]{ambrosio2008gradient}.
Additionally, under some assumptions on $D$\footnote{For instance, if $D$ has a first variation $\smash{\frac{\delta D}{\delta\rho}}$ that is differentiable and if $D$ is a \emph{regular} functional in the sense of \cite[Definition~10.1.4]{ambrosio2008gradient}. See \cref{prop:gradient-first-var} for a short proof.}, then for all $\rho\in\PRd$,
\begin{equation}
    \label{eq:wass-gradient}
    \nablaa D(\rho)=\nabla \frac{\delta D}{\delta\rho}(\rho),
\end{equation}
where $\smash{\frac{\delta D}{\delta\rho}}$ is the first variation of $D$.
An absolutely continuous curve $(\rho_t)_t$ in $\PRd$ is a \emph{Wasserstein gradient flow} of $D$ if it satisfies the continuity equation
\begin{equation}
    \label{eq:CE}
    \partial_t\rho_t =- \div(\rho_t v_t) \qquad\text{with }v_t = -\nablaa D(\rho_t)
\end{equation} 
in a weak sense for a.e.~$t > 0$ \cite[Equation~(8.3.8)]{ambrosio2008gradient}. 
As for gradient flows in Hilbert spaces, Wasserstein gradient flows are easier to study whenever the functional $D$ exhibits convexity properties, this time along a specific type of curves, namely, geodesics and generalized geodesics. 
For concision's sake, we introduce these notions only for absolutely continuous measures and refer to \cref{appendix:generalizedGeod} for a presentation of the general setting, following the framework of \cite[Chapter~9]{ambrosio2008gradient}. 

\begin{definition}[Convexity along (generalized) geodesics with absolutely continuous measures] \label{def:geod_cvx}
    Let $\rho_0 \in \PRdac$ and $\rho_1,\rho_2 \in \PRd$. Let $\optmap{\rho_\smallzero}{\rho_\smallone}$ be the OT map between $\rho_0$ and $\rho_1$ according to \nameref*{thm:brenier}.
    The \emph{geodesic} between $\rho_0$ and $\rho_1$ is the curve $(\rho_t)_{t \in [0,1]}$ given by
    \begin{equation}
    \label{eq:geodesic-ac}
    \rho_t=[(1-t)\id+t\optmap{\rho_\smallzero}{\rho_\smallone}]_*\rho_0,
\end{equation}
in the sense that $\W(\rho_t, \rho_t) = |t-s| \W(\rho_0,\rho_1)$ for all $s,t \in [0,1]$.
The \emph{generalized geodesic} between $\rho_1$ and $\rho_2$ \emph{with anchor point} $\rho_0$ is the curve $(\rho_t)_{t \in [0,1]}$ given by 
\begin{equation}
    \label{eq:gen-geodesic-ac}
    \rho_t=[(1-t)\optmap{\rho_\smallzero}{\rho_\smallone}+t\optmap{\rho_\smallzero}{\rho_\smalltwo}]_*\rho_0.
\end{equation}
\noindent A functional $D : \PRd \to \bR$ is said to be \emph{$\lambda$-convex along generalized geodesics} for $\lambda\in\bR$ if 
\begin{equation}
    \label{eq:cvx-wass-gen-ac}
    D(\rho_t)\leq (1-t)D(\rho_1)+tD(\rho_2)-\frac\lambda2t(1-t)\|\optmap{\rho_\smallzero}{\rho_\smallone}-\optmap{\rho_\smallzero}{\rho_\smalltwo}\|_{\Lrhoz}^2,
\end{equation}
for all curves of the form \cref{eq:gen-geodesic-ac}.
If the above formula holds only when $\rho_0=\rho_1$ (in which case $\optmap{\rho_\smallzero}{\rho_\smallone} = \id$ and \cref{eq:gen-geodesic-ac} is a geodesic, of the form \cref{eq:geodesic-ac}), $D$ is said to be \emph{$\lambda$-convex along geodesics}. 
\end{definition}

Whenever the functional $D$ is $\lambda$-convex along geodesics in $\PRd$, existence and uniqueness of gradient flows are guaranteed by \cite[Theorem~11.1.4]{ambrosio2008gradient}; if furthermore $\lambda > 0$, then $\rho_t$ converges exponentially fast toward the (then unique) minimizer of $D$. 
Convexity along \emph{generalized} geodesics is a stronger condition, and enables sharper results on gradient flows and their discretizations \cite[Chapter~11]{ambrosio2008gradient}. 
All functionals we consider in this work are convex along \emph{generalized} geodesics in the general sense of \cite[Chapter~9]{ambrosio2008gradient} (see \cref{appendix:generalizedGeod}), hence in the weaker sense of \cref{def:geod_cvx}, as the latter only asks for convexity when the source or anchor measures are absolutely continuous.

\begin{example}[Relative entropy]
\label{ex:entropy}
A celebrated example of a functional that motivated the study of gradient flows in $\PRd$ is the \emph{relative entropy}, also known as \emph{Kullback--Leibler divergence} \cite{kullback1951information}. 
The relative entropy of $\rho$ with respect to $\gamma \in \PRd$ is defined as
\begin{equation}
    \label{eq:relative-entropy}
    D(\rho)\coloneqq H(\rho\midd\gamma)= \int_{\bR^d}\log\Big(\frac{\dd\rho}{\dd\gamma}\Big)\dd\rho
\end{equation}
if $\rho$ has a density with respect to $\gamma$, else $H(\rho\midd\gamma)=\infty$. 
The relative entropy with respect to $\gamma$ is $\lambda$-convex along generalized geodesics if and only if $\gamma$ is $\lambda$-log-concave \cite[Theorem~9.4.11]{ambrosio2008gradient}.
Writing $\gamma \propto e^{-V}$ for some potential $V : \bR^d \to \bR \cup \{\infty\}$, 
a Wasserstein gradient flow $(\rho_t)_t$ of \cref{eq:relative-entropy} satisfies the following continuity equation, also known as the \emph{Fokker--Planck equation}, 
\begin{equation}
    \label{eq:entropy-flow}
    \partial_t\rho_t=\div(\rho_t\nablaa H(\rho_t\midd\gamma)),\qquad \text{where }\nablaa H(\rho\midd \gamma)=\nabla\log\rho+\nabla V.
\end{equation}
Setting for instance $V = 0$ retrieves the {heat equation}. 
Another common choice is $V(x) = \smash{\frac{\|x\|^2}2}$, which amounts to $\gamma= N(0_d,I_d)$.
Under some conditions on the reference measure $\gamma$ (its log-concavity, or more generally the logarithmic Sobolev inequality \cite{stam1959some,gross1975logarithmic}), the Wasserstein gradient flow \cref{eq:entropy-flow} of $H$ has been shown to converge exponentially fast toward $\gamma$, for instance in terms of the Wasserstein distance \cref{eq:kantorovitch} \cite{bakry2006diffusions,bakry2014analysis,ambrosio2008gradient}.
\end{example}

\begin{example}[MMD] \label{ex:MMD}
The \emph{Maximum Mean Discrepancy (MMD)} between two probability measures $\rho$ and $\gamma$  in $\PRd$ is defined as
\begin{equation}
    \label{eq:MMD}
    D(\rho)\coloneqq \operatorname{MMD}_k(\rho,\gamma)=\frac12\iint_{\bR^d\times\bR^d} k(x,y)\dd (\rho - \gamma)(x)\dd( \rho- \gamma)(y),
\end{equation}
where $k : \bR^d \times \bR^d \to \bR$ is a symmetric positive-definite (or conditionally positive-definite) kernel. Contrary to the relative entropy \cref{eq:relative-entropy}, the MMD is finite (under moments assumptions) even when $\rho$ and $\gamma$ have disjoint supports or when the measures are atomic.
A standard choice is $k(x,y) = -\|x-y\|$, in which case $\operatorname{MMD}_k$ is referred to as the \emph{energy distance MMD} and has very good behavior regarding the convergence of its gradient flow \cite{chizat2026mmd}.
Other choices of kernels are possible, see for instance \cite{glaunes2004diffeomorphic,gretton2006kernel,hagemann2023posterior,boufadene2025global,hertrich2023generative}.
The MMD has the nice property that its \emph{sample complexity} (the rate of convergence of its value between some measure and its empirical counterpart when the number $n$ of samples goes to infinity) is independent of the dimension and scales as $O(1/\sqrt n)$ \cite{gretton2006kernel}. This is in stark contrast to the \cref{eq:kantorovitch} distance, which suffers from the curse of dimensionality and whose sample complexity scales as $O(1/n^{1/d})$ \cite{weed2019sharp}. 
\end{example}

\section{The constrained gradient flow} \label{sec:theory}

Let $\rho_0\in\PRdac$ and $\gamma\in\PRd$.
In this section, 
we propose a new evolution equation in $\LrhozRd$, which, under standard convexity assumptions, converges as $t\to\infty$ to the OT map $\optmap{\rho_\smallzero}{\gamma}$ between $\rho_0$ and $\gamma$. We refer to this evolution in $\LrhozRd$ as a \emph{constrained gradient flow}. It is defined in \cref{sec:constrained-gf}; \cref{sec:existence} then focuses on proving the existence of its solutions, and \cref{sec:convergence} on proving its convergence to the OT map.

\subsection{Definition of the constrained gradient flow}
\label{sec:constrained-gf}

Let $D:\PRd\to\bR$ be some divergence that assesses whether a given probability measure $\rho \in \PRd$ is close to $\gamma$; for instance, the entropy \cref{eq:relative-entropy} or the MMD \cref{eq:MMD}, both relative to $\gamma$. 
As a proxy for solving the OT problem between $\rho_0$ and $\gamma$, recall that we consider the constrained optimization problem \cref{eq:min-D-intro}: 
one wishes to find some transport map $\optmap{\rho_\smallzero}{\gamma}$ that minimizes $F:T\mapsto D(T_*\rho_0)$ (which guarantees that $\optmap{\rho_\smallzero}{\gamma}{}_*\rho_0=\gamma$) while belonging to the cone of gradients of convex functions
\begin{equation}
    \label{eq:krho}
    \tag{\ref*{eq:K-intro}}
    \K=\{\nabla\phi\mid\phi\in \dHrhozRd\text{ is convex}\}\subset \LrhozRd.
\end{equation}
Hence the problem amounts to minimizing $F$ over the cone $\K$.
Akin to the standard setup of minimizing a functional on a submanifold of some ambient Hilbert space, we consider the \emph{gradient flow} of $F$ in $\LrhozRd$ \emph{constrained to $\K$}, and hope that suitable convexity conditions on $F$ or on $D$ guarantee its convergence to $\optmap{\rho_\smallzero}{\gamma}$. 

\subsubsection{The set \texorpdfstring{$\K$}{K}, its tangent cone, and the lifted functional}
We first describe the set $\K$ and the functional $F$ defined above.
\begin{proposition}[$\K$ is convex and closed]
    \label{prop:convex-closed}
    The set $\K$ is a closed convex subset of the Hilbert space $\LrhozRd$.
\end{proposition}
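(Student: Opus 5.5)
Convexity is immediate from linearity of the gradient: for $T_i=\nabla\phi_i\in\K$ with $\phi_i\in\dHrhozRd$ convex and $t\in[0,1]$, we have $(1-t)\nabla\phi_1+t\nabla\phi_2=\nabla\big((1-t)\phi_1+t\phi_2\big)$. The combination $(1-t)\phi_1+t\phi_2$ is convex and lies in $\dHrhozRd$, so the combination of maps lies in $\K$. The same computation with nonnegative coefficients shows $\K$ is in fact a convex cone.

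For closedness, take a sequence $T_n=\nabla\phi_n\in\K$ converging to some $T$ in $\LrhozRd$; we must show $T=\nabla\phi$ for a convex $\phi\in\dHrhozRd$. I would avoid working with the potentials directly and use instead the characterization of gradients of convex functions as maximal cyclically monotone maps (Rockafellar's theorem). Passing to a subsequence, $T_n\to T$ $\rho_0$-a.e., say on a set $A$ of full $\rho_0$-measure on which every $T_n$ is defined and is a selection of the subdifferential $\partial\phi_n$. Cyclical monotonicity of $T_n$ on $A$, namely $\sum_{i}\langle T_n(x_i),x_{i+1}-x_i\rangle\le0$ for every finite cycle $x_1,\dots,x_k,x_{k+1}=x_1$ in $A$, passes to the pointwise limit. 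So $T$ restricted to $A$ is cyclically monotone, and Rockafellar's theorem provides a proper lsc convex $\phi:\bR^d\to\bR\cup\{\infty\}$ with $T(x)\in\partial\phi(x)$ for $x\in A$.

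The main obstacle is turning this subgradient selection into an honest gradient with the required regularity. Since $\rho_0$ is absolutely continuous, the points where $\phi$ is not differentiable form a Lebesgue-null (hence $\rho_0$-null) set inside the interior of its domain. This is the same argument used in the proof of \nameref*{thm:brenier}, so I would invoke it there. On the boundary side, $\phi$ is finite on $A$ (subdifferentials are nonempty there), so $A\subset\operatorname{dom}\phi$. The boundary of the convex set $\operatorname{dom}\phi$ is Lebesgue-null, so $\rho_0$-a.e.\ point of $A$ lies in the interior of the domain.

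It follows that $T=\nabla\phi$ $\rho_0$-a.e. Since $\nabla\phi=T\in\LrhozRd$, this gives $\phi\in\dHrhozRd$, interpreting the latter as the paper does: potentials whose (a.e.) gradient is square-integrable against $\rho_0$. Hence $T\in\K$. One subtlety is that $\phi$ may take the value $+\infty$ outside the support, whereas the definition of $\K$ asks for functions $\bR^d\to\bR$. If needed, I would resolve this by replacing $\phi$ with its finite convex extension on the interior of the convex hull of the support of $\rho_0$. Alternatively, and more simply, one can note that only the equivalence class of $\nabla\phi$ in $\LrhozRd$ matters.
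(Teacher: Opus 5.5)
Your argument is correct, and it takes a genuinely different route from the paper's. The paper never builds a potential. It uses $\W_2({T_n}_*\rho_0,T_*\rho_0)\le\|T_n-T\|_{\Lrhoz}$ to get weak convergence of the pushforwards. It then invokes stability of optimal maps (Corollary~5.23 in Villani's book) to get $T_n\to T'$ in probability, where $T'$ is the Brenier map from $\rho_0$ to $T_*\rho_0$, and concludes $T=T'$ by uniqueness of limits in probability. You stay entirely within convex analysis: pointwise limits preserve cyclical monotonicity, Rockafellar's theorem produces a potential, convex functions are differentiable a.e., and boundaries of convex sets are negligible. This is essentially the mechanism behind the stability theorem the paper cites. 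The paper's proof is two lines given the OT machinery, and it identifies the limit directly as an optimal map. Yours is self-contained, uses only $\rho_0\ll\mathcal{L}^d$, and exhibits the potential explicitly. That explicitness brings to light a point the paper's proof absorbs into the words ``$T'\in\K$''.

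That point is the one in your last paragraph, and neither of your proposed fixes works. Restricting to the interior of the convex hull of $\operatorname{supp}\rho_0$ does not give a function that is finite on all of $\bR^d$. The observation that only the $\Lrhoz$-class matters does not help either, because that class need not contain the gradient of any finite convex function. Take $d=1$, $\rho_0$ uniform on $(0,1)$ and $T(x)=-x^{-1/4}$. The maps $T_n(x)=-\max(x,1/n)^{-1/4}$ are derivatives of finite convex functions on $\bR$ and converge to $T$ in $\Lrhoz$. Yet $T$ is unbounded below near $0$, whereas the a.e.\ derivative of a finite convex function on $\bR$ is bounded on $(0,1)$. So if $\K$ is read literally, with $\phi:\bR^d\to\bR$, it is not closed, and no patch can exist.

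The statement has to be read with proper l.s.c.\ potentials $\phi:\bR^d\to\bR\cup\{+\infty\}$, i.e.\ $\K$ is the set of Brenier maps from $\rho_0$. This is how the paper actually uses $\K$, and its statement of Brenier's theorem carries the same imprecision. Under that reading your proof is complete as it stands, with no extension step needed. When $\operatorname{supp}\rho_0=\bR^d$ the issue disappears altogether: $\operatorname{dom}\phi$ contains the convex hull of the dense set $A$, hence equals $\bR^d$.
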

\begin{proof}
    The convexity of $\K$ directly follows from the convexity of the space of convex functions on $\bR^d$.
    Let us then show its closedness. Let $(T_n)_n\subset\K$ be a sequence of elements of $\K$ that strongly converges to some $T\in\LrhozRd$. Let us write $\rho_n\coloneqq T_n{}_*\rho_0$. By continuity of the pushforward mapping (given by the inequality $\W_2(T_*\rho_0,S_*\rho_0)\leq \|T-S\|_{\Lrhoz}$ for all $T,S\in\LrhozRd$), the sequence $(\rho_n)_n\subset\PRd$ converges weakly to $\gamma\coloneqq T_*\rho_0$. Since $\rho_0$ has a density, by \cite[Corollary~5.23]{villani2009optimal}, $(T_n)_n$ converges in probability to the optimal transport map $T'\in\K$ between $\rho_0$ and $\gamma$. Since $(T_n)_n$ converges strongly to $T$, it also converges in probability to $T$ and the uniqueness of the limit in probability yields $T(x)=T'(x)$ for $\rho_0$-a.e.~$x\in\bR^d$; hence $T=T'$ in $\LrhozRd$ and $T$ therefore belongs to $\K$.
\end{proof}

Since $\K$ is a closed convex in $\LrhozRd$, it admits a (Clarke) tangent cone (see \cref{sec:cones} for a reminder on this matter in arbitrary Hilbert spaces).
\begin{definition}[Tangent cone of $\K$]
    \label{lem:tangentconeexplicitly}
    The \emph{tangent cone} of $\K$ at some $T\in\K$ is defined as
    \begin{equation}
        \Tan_{T}\!\K=\overline{\big\{w\in \LrhozRd\mid \exists t_0>0\text{ s.t.~}\forall t\leq t_0,\, T+tw\in\K \big\}}^{\Lrhoz}.
    \end{equation}
\end{definition}
\noindent 
The definition \cref{eq:krho} of $\K$ allows us to write its tangent cone more explicitly, as follows.
\begin{lemma}[Characterization of the tangent cone of $\K$]
    Let $T\in\K$, and write $T=\nabla\phi$ with $\phi\in\dHrhozRd$ convex.
    The tangent cone of $\K$ at $T$ is equal to
\begin{equation}
    \label{eq:tangentcone}
    \Tan_{T}\!\K=\overline{\big\{\nabla\bfp,\,\bfp\in\dHrhozRd\mid \exists t_0>0\text{ s.t.~}\forall t\leq t_0,\, \phi+t\bfp\text{ convex} \big\}}^{\Lrhoz}.
\end{equation}
\end{lemma}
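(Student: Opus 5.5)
The plan is to compare the two sets whose closures define the two cones, before taking closures, and show they coincide. Set
\begin{equation}
A\coloneqq\{w\in\LrhozRd\mid \exists t_0>0,\ \forall t\leq t_0,\ T+tw\in\K\}
\end{equation}
and
\begin{equation}
B\coloneqq\{\nabla\bfp\mid \bfp\in\dHrhozRd,\ \exists t_0>0,\ \forall t\leq t_0,\ \phi+t\bfp\text{ convex}\}.
\end{equation}
It suffices to prove $A=B$ as subsets of $\LrhozRd$, since closures of equal sets are equal. (Throughout, $t$ ranges over $(0,t_0]$, as in the definition of the tangent cone.)

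For the inclusion $B\subset A$, take $\bfp\in\dHrhozRd$ with $\phi+t\bfp$ convex for $t\le t_0$. Then $\phi+t\bfp\in\dHrhozRd$, since this space is linear. Its gradient is $T+t\nabla\bfp$, so by the definition \cref{eq:krho} this gradient lies in $\K$. Hence $\nabla\bfp\in A$.

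For $A\subset B$, take $w\in A$ with a corresponding $t_0$. Fix one time, $t_0$ itself. By definition of $\K$, there is a convex $\psi\in\dHrhozRd$ with $T+t_0w=\nabla\psi$. Define
\begin{equation}
\bfp\coloneqq(\psi-\phi)/t_0\in\dHrhozRd,
\end{equation}
so that $\nabla\bfp=w$ in $\LrhozRd$. For $t\in(0,t_0]$ we can write
\begin{equation}
\phi+t\bfp=(1-t/t_0)\phi+(t/t_0)\psi,
\end{equation}
which is a convex combination of convex functions and hence convex. Therefore $w=\nabla\bfp\in B$.

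The one point needing care is that elements of $\K$ are equivalence classes in $\LrhozRd$: the potential $\psi$ is determined only up to its gradient agreeing $\rho_0$-a.e. This does not matter, because the second step only requires that \emph{some} representative potential be convex, and the convex combination argument applies to that representative directly. One should also check that $\phi$ and $\psi$ are genuinely convex functions on $\bR^d$ and not merely on the support of $\rho_0$. The paper's definition \cref{eq:krho} takes convexity on $\bR^d$, so this is consistent. I expect no real obstacle beyond this bookkeeping of representatives.
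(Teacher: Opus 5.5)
Your proof is correct and matches the paper's approach: you show the two sets coincide before closure, and the identity $\phi+t\bfp=(1-t/t_0)\phi+(t/t_0)\psi$ is what gives convexity for every $t\leq t_0$ from a single potential at time $t_0$. The paper gives no written proof and presents the lemma as an immediate consequence of the definition \cref{eq:krho} of $\K$; your argument is that unpacking, and your remark on representatives correctly shows the right-hand side does not depend on which convex potential $\phi$ of $T$ is chosen.
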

\begin{remark}[Some intuition on the tangent cone]
\label{rem:tangent-cone}
It is worth expanding a bit on the characterization \cref{eq:tangentcone} of the tangent cone of the closed convex cone $\K$, which differs whenever we examine it at some $T$ which is $(i)$ in the interior\footnote{In this remark, $\K$ is considered here as a subset of the topological space $\overline{\{\nabla\bfp,\,\bfp\in\dHrhozRd\}}^{\Lrhoz}$. Indeed, $\K$ has empty interior in the whole $\LrhozRd$, and its boundary in $\LrhozRd$ is itself.} of $\K$ or $(ii)$ on its boundary.
\begin{enumerate}[$(i)$,leftmargin=*]
    \item \textit{In the interior of $\K$.} Let $T\coloneqq\nabla\phi$ where $\phi$ is some $C^2$ strictly convex function, that is, such that $\nabla^2\phi>0$ on $\bR^d$. Then, for any $\bfp$ of $C^2$-regularity, there exists some small enough $t_0>0$ such that $\phi+t\bfp$ remains convex for all $t<t_0$, and therefore the gradient of any such $\bfp$ belongs to $\Tan_{T}\!\K$.
    \item \textit{On the boundary of $\K$.}
    Let $T\coloneqq\nabla\phi$ where $\phi$ is piecewise affine and let $\bfp\coloneqq-\frac12\|x\|^2$. Then $w\coloneqq\nabla\bfp$ does not belong to the tangent cone at $T$, since adding $t\bfp$ to $\phi$ results in a function $\phi+t\bfp$ that is never convex for any $t>0$.
    This example still holds in the more general setting of functions $\phi$ that are convex, but not strictly convex on the whole $\bR^d$, and strictly concave functions $\bfp$. \qedhere
\end{enumerate}
\end{remark}

Finally, let us introduce some notation and describe the functional $F:T\mapsto D(T_*\rho_0)$ mentioned in the introduction of this section, which will be central in this work.
\begin{definition}[Lifted functional]
\label{def:lifted}
Let $D:\PRd\to\bR$ be some functional on $\PRd$.
The \emph{lifted functional} $F:\LrhozRd\to\bR$ is the functional $F\coloneqq D\circ\pi$, where $\pi:T\mapsto T_*\rho_0$.
\end{definition}
\noindent Observe that this lifted functional $F$ is constant along the fibers of $\pi$, that is, along all the
\begin{equation}
    \pi^{-1}(\rho)=\{T\in\LrhozRd\mid T_*\rho_0=\rho\}
\end{equation}
for $\rho\in\PRd$.
The lifted functional $F$ also inherits many properties from $D$ which will prove useful in this work; those results are stated and proved in \cref{sec:lifting}.
Of particular importance, whenever $D$ is differentiable in $\PRd$, $F$ is differentiable in $\LrhozRd$ as well and its gradient is given by $\nabla F(T)=\nablaa D(T_*\rho_0)\circ T$ (see \cref{lem:frechet-diff}). In order to perform the gradient flow of $F$ \emph{constrained to $\K$}, one needs to project this gradient onto the tangent cone of $\K$.
This motivates the first definition of the next section, which is the standard definition of constrained gradient flows in Hilbert spaces (see \cref{sec:diff-hilbert}).

\subsubsection{The constrained gradient flow: definition and first properties}
\begin{definition}[Constrained gradient flow]
\label{def:constrained-flow}
Let $D:\PRd\to\bR$ be some functional on $\PRd$
and $F=D\circ\pi$ its lifted functional.
A \emph{constrained gradient flow} of $F$ in $\LrhozRd$ is a curve $(T_t)_t$ in $H^1([0,t_{\text{max}}],\LrhozRd)$ that is solution of
\begin{equation}
    \label{eq:constrained-flow}
    \tag{\textsc{c}ons.\textsc{gf}}
    \left\{\begin{aligned}
         &\ T_0=\id   \\
         &\ \partial_t T_t
         = \proj_{\Tan_{T_t}\!\!\K}(-\nablaa D(T_t{}_*\rho_0)\circ T_t) \qquad \text{for a.e.~}t\in (0,t_{\text{max}}),
    \end{aligned}\right.
    \vphantom{\left\{\begin{aligned}
         &\ T_0=\id   \\
         &\  {\int_{\bR^d}}
    \end{aligned}\right.}
\end{equation}
where
$\proj$ is the usual projection onto convex sets.
\end{definition}

\noindent
Note that since for all $T\in\K$, the set $\Tan_{T}\!\K$ is a nonempty closed convex subset of the Hilbert space $\LrhozRd$, the projection onto $\Tan_{T}\!\K$ exists and is unique, making $\partial_tT_t$ well-defined in \cref{eq:constrained-flow}.

\begin{remark}[On the necessity of the projection step]
    \label{rem:flowmap1}
    The projection step in \cref{eq:constrained-flow} is necessary, since updates $-\nablaa D(T_t{}_*\rho_0)\circ T_t$ do not, in general, make $T_t$ stay in $\K$. One can indeed find counter-example measures $\rho_0$ and $\gamma$ when $D$ is the relative entropy \cref{eq:relative-entropy} with respect to $\gamma$ and $d\geq2$ \cite{tanana2021comparison,lavenant2022flow,kim2012generalization}.
\end{remark}

\noindent
Let us now establish some properties satisfied by the increment $\partial_tT_t$ in \cref{eq:constrained-flow}.

\begin{lemma}[First properties of $\partial_t T_t$]
    \label{lem:charac}
    Let $v_t\coloneqq -\nablaa D(T_t{}_*\rho_0)$ and $w_t\coloneqq \partial_tT_t$ be the solution of the constrained gradient flow \cref{eq:constrained-flow}.
    Then
    \begin{enumerate}[$(i)$]
        \item \label{item:charac1} $\langle v_t\circ T_t,w_t\rangle_{\Lrhoz}=\|w_t\|^2_{\Lrhoz}$;
        \item \label{item:charac2} for all $S\in\K$, $\langle v_t\circ T_t-w_t,S-T_t\rangle_{\Lrhoz}\leq0$.
    \end{enumerate}
\end{lemma}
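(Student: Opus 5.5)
The plan is to work entirely at a fixed time $t$ and to use only the variational characterization of the metric projection onto a nonempty closed convex cone in a Hilbert space. Set $C\coloneqq\Tan_{T_t}\!\K$ and $u\coloneqq v_t\circ T_t\in\LrhozRd$, so that $w_t=\proj_C(u)$. The standard obtuse-angle characterization gives, for every $z\in C$,
\begin{equation}
\langle u-w_t,\,z-w_t\rangle_{\Lrhoz}\leq 0 .
\end{equation}
Both items will follow from this single inequality once suitable test elements $z$ are chosen. The cone structure of $C$ is what makes those choices available.

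For \ref{item:charac1}, I will use that $C$ is a cone containing $0$; this holds because it is the closure of a set stable under multiplication by nonnegative scalars. Taking $z=0$ and $z=2w_t$, both of which lie in $C$, the inequality becomes $-\langle u-w_t,w_t\rangle\leq 0$ and $\langle u-w_t,w_t\rangle\leq0$ respectively. Hence $\langle u-w_t,w_t\rangle=0$, which is exactly $\langle v_t\circ T_t,w_t\rangle_{\Lrhoz}=\|w_t\|^2_{\Lrhoz}$.

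For \ref{item:charac2}, the key observation is that for any $S\in\K$ we have $S-T_t\in C$. Indeed, by convexity of $\K$ (\cref{prop:convex-closed}), $T_t+s(S-T_t)\in\K$ for all $s\in[0,1]$, so $S-T_t$ belongs to the set whose closure defines the tangent cone in \cref{lem:tangentconeexplicitly}, with $t_0=1$. I will also use that $C$ is convex, which holds because the pre-closure set is convex by convexity of $\K$. Combining convexity with the cone property, $C$ is closed under addition, so $z=w_t+(S-T_t)\in C$. Plugging this $z$ into the variational inequality gives $\langle u-w_t,S-T_t\rangle\leq0$, which is \ref{item:charac2}. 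Alternatively, one can take $z=S-T_t$ and use \ref{item:charac1} to cancel the $w_t$ term.

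The only point requiring some care is checking that $C$ really is a closed convex cone. Closedness holds by definition. The cone property and convexity of the pre-closure set follow from convexity of $\K$, and both properties pass to the closure. I do not expect any genuine obstacle here. Note also that $T_t\in\K$ for all $t$ is implicit in \cref{eq:constrained-flow}, since the tangent cone is only defined at points of $\K$.
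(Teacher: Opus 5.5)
Your proof is correct and matches the paper's argument essentially step for step. You test the obtuse-angle characterization of the projection onto the closed convex cone $\Tan_{T_t}\K$ at $0$ and $2w_t$ for (i), and at $w_t+(S-T_t)$ for (ii), with $S-T_t\in\Tan_{T_t}\K$ coming from the convexity of $\K$; the paper does exactly the same.
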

\begin{proof}
    Since $\Tan_{T_t}\!\K$ is a nonempty closed convex set by \cref{prop:convex-closed}, one can use the characterization of the projection on closed convex sets \cite[Theorem~5.2]{brezis2011functional}:
    \begin{equation}
        \label{eq:brezis-proj}
        \text{for all }u\in\Tan_{T_t}\!\K,\qquad \langle v_t\circ T_t-w_t,u-w_t \rangle_{\Lrhoz}\leq0.
    \end{equation}
    This inequality, together with the fact that $\Tan_{T_t}\!\K$ is a cone, allows one to obtain both results as follows.
    $(i)$ By the stability of the cone $\Tan_{T_t}\!\K$ by nonnegative scalings, $2w_t$ belongs to $\Tan_{T_t}\!\K$.  Taking $u\coloneqq 0$ and $u\coloneqq 2w_t$ in \cref{eq:brezis-proj} therefore yields the two inequalities that constitute the desired equality.
    $(ii)$ Let $S\in\K$. Then it is immediate that $S-T_t$ belongs to $\Tan_{T_t}\!\K$. By convexity of $\Tan_{T_t}\!\K$, $\frac12(S-T_t+w_t)$ is in $\Tan_{T_t}\!\K$ as well; and the same goes for $S-T_t+w_t$ by stability of the cone by nonnegative scalings. Taking $u\coloneqq S-T_t+w_t$ in \cref{eq:brezis-proj} then gives the desired inequality.
\end{proof}
\begin{remark}[A variational characterization for $\partial_t T_t$]
\label{rem:variational}
Let us stress that the projection step in the constrained gradient flow \cref{eq:constrained-flow} can be written explicitly as
\begin{equation}
    \label{eq:constrained-flow-min}
    \partial_t T_t
         = {{\argmin_{w\in \Tan_{T_t}\!\!\K}} {\int_{\bR^d}}}\|v_t\circ T_t-w\|^2\dd\rho_0 \qquad \text{for a.e.~}t\in (0,t_{\text{max}}),
\end{equation}  
where $v_t\coloneqq -\nablaa D(T_t{}_*\rho_0)$.
This highlights that each time step in the constrained gradient flow \cref{eq:constrained-flow} is a \emph{quadratic minimization problem}.
If $\Tan_{T_t}\!\K$ contains tangent vectors of the form $\nabla\xi$ for $\xi \in C^2_c(\bR^d,\bR)$ (which is the case for instance if $T_t$ writes $T_t=\nabla\phi_t$ with $\phi_t$ strictly convex of $C^2$-regularity, see \cref{rem:tangent-cone}), one gets the following optimality condition
\begin{equation}
    \label{eq:optimality-condition-div}
    \div(\rho_0w_t)=\div(\rho_0v_t\circ T_t),
\end{equation}
where $w_t\coloneqq \partial_tT_t$; see \cref{sec:optimality-condition} for a proof.
This suggests interpreting the time variation $\partial_tT_t$ as the gradient component in the \nameref*{thm:helmholtz} of $v_t\circ T_t$ (see \cref{app:helmholtz} for a reminder). In sharp contrast to usual gradient flows on probability measures, this removal of the non-gradient component is performed
\emph{with respect to $\rho_0$}
and \emph{not} with respect to the current measure $\rho_t\coloneqq T_t{}_*\rho_0$.
\end{remark}

\subsection{Existence of solutions to the constrained gradient flow}
\label{sec:existence}

We assume that the functional $D:\PRd\to\bR$ is \emph{proper}---that is, its domain $\operatorname{Dom}(D)\coloneqq \{\rho\in\PRd\mid D(\rho)<\infty\}$ is non-empty---, and that it is \emph{bounded from below}.
In this section, we do not impose any other assumption on $D$; in particular, $D$ does not need to have $\gamma$ as a minimizer, nor to admit a minimizer at all.
Let $F=D\circ\pi$ be the lifted functional of $D$.

The main result of this section is \cref{thm:flow-defini}, which states the existence of a solution to the constrained gradient flow \cref{eq:constrained-flow}. To prove it, it will be convenient to consider the \emph{constrained functional} $F_{\K}\coloneqq F+\imath_\K$, where $\imath_\K(T)=0$ if $T\in \K$ and $\infty$ otherwise (the convex indicator function of $\K$).

\begin{lemma}[Element of minimal norm of the subdifferential of the constrained functional]
\label{lem:2flows}
Let $D:\PRd\to\bR$ be some functional
    that is
    Wasserstein differentiable,
    let $F=D\circ\pi$ be its lifted functional and $F_{\K}\coloneqq F+\imath_\K$.
Then for all $T\in\K$,
\begin{equation}
    \partial^\circ F_{\K}(T)
    =
    -\proj_{\Tan_T\K}(-\nablaa D(T_*\rho_0)\circ T).
\end{equation}
\end{lemma}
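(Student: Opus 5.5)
Write $g\coloneqq\nabla F(T)=\nablaa D(T_*\rho_0)\circ T$, which by \cref{lem:frechet-diff} is the Fréchet gradient of $F$ at $T$. The plan is to compute the full Fréchet subdifferential of $F_\K$ at $T\in\K$ and then take its element of minimal norm. We aim for the sum rule
\begin{equation}
    \partial^-F_\K(T)=g+\Nor_T\K,
\end{equation}
where $\Nor_T\K=\{\eta\mid\langle\eta,u\rangle_{\Lrhoz}\le0\ \forall u\in\Tan_T\K\}$ is the normal cone, i.e.\ the polar of $\Tan_T\K$. For the indicator of a closed convex set, \cref{prop:convex-closed} and the standard convex-analysis fact give $\partial^-\imath_\K(T)=\Nor_T\K$.

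\emph{Inclusion $\supseteq$.} Let $\eta\in\Nor_T\K$ and $S\in\K$. Then $S-T\in\Tan_T\K$, so $\langle\eta,S-T\rangle\le0$. Fréchet differentiability of $F$ gives
\begin{equation}
    F_\K(S)-F_\K(T)=\langle g,S-T\rangle+o(\|S-T\|)\ge\langle g+\eta,S-T\rangle+o(\|S-T\|).
\end{equation}
For $S\notin\K$ the left side is $+\infty$, so the inequality holds trivially.

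\emph{Inclusion $\subseteq$.} Let $\xi\in\partial^-F_\K(T)$. For $u$ in the pre-closure set of \cref{lem:tangentconeexplicitly}, the point $T+tu$ lies in $\K$ for small $t>0$. Plugging $S=T+tu$ into the subdifferential inequality gives
\begin{equation}
    t\langle g,u\rangle+o(t)\ge t\langle\xi,u\rangle+o(t).
\end{equation}
Dividing by $t$ and letting $t\downarrow0$ yields $\langle\xi-g,u\rangle\le0$. By density and continuity this extends to all of $\Tan_T\K$, so $\xi-g\in\Nor_T\K$. The main point to check is that the $o(\cdot)$ terms behave uniformly along rays. This holds because the remainder is $o(\|S-T\|)=o(t\|u\|)$ with $u$ fixed.

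\emph{Minimal-norm element.} We must minimize $\|g+\eta\|$ over $\eta\in\Nor_T\K$. This equals the distance from $-g$ to the closed convex cone $\Nor_T\K$, attained at $\eta^\ast=\proj_{\Nor_T\K}(-g)$. By Moreau's decomposition for the mutually polar closed convex cones $\Tan_T\K$ and $\Nor_T\K$,
\begin{equation}
    -g=\proj_{\Tan_T\K}(-g)+\proj_{\Nor_T\K}(-g).
\end{equation}
Hence
\begin{equation}
    g+\eta^\ast=-\proj_{\Tan_T\K}(-g)=-\proj_{\Tan_T\K}(-\nablaa D(T_*\rho_0)\circ T),
\end{equation}
which is the claim. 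If one prefers to avoid citing Moreau, the same conclusion follows from the variational inequality \cref{eq:brezis-proj}, together with the two properties established in \cref{lem:charac} (with $S$ replaced by arbitrary tangent directions). These show directly that $w-\proj_{\Tan_T\K}(w)\in\Nor_T\K$ and that it is orthogonal to $\proj_{\Tan_T\K}(w)$. That gives the decomposition, and the minimality follows by Pythagoras.
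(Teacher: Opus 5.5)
Your proof is correct and follows essentially the same route as the paper: you identify $\partial^-F_{\K}(T)=\nabla F(T)+\Nor_T\K$ and then use the Moreau decomposition for the mutually polar cones $\Tan_T\K$ and $\Nor_T\K$ to obtain the minimal-norm element $-\proj_{\Tan_T\K}(-\nabla F(T))$. The only difference is that you prove the sum rule directly, from the Fréchet differentiability of $F$ and from testing along rays $T+tu$ in the tangent cone, whereas the paper cites the Fréchet sum rule from Mordukhovich.
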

\begin{proof}
    By \cref{lem:frechet-diff}, since $D$ is Wasserstein differentiable, $F$ is Fréchet differentiable and therefore $\partial^\circ F=\nabla F$. 
    The sum rule for Fréchet subdifferentials \cite[Propositions 1.107 $(i)$ and 1.79]{mordukhovich2009variational} then gives that the Fréchet subdifferential of $F_{\K}$ at any $T\in \K$ is given by $\partial F_{\K}(T)=\nabla F(T)+\Nor_\K(T)$, where $\Nor_\K(T)$ is the normal cone of $\K$ at $T$. Its element of minimal norm is then
    \begin{multline}
        \partial^\circ F_{\K}(T)
        =\argmin_{v\in \partial F_{\K}}\|v\|_{\Lrhoz}
        =\nabla F(T)+\argmin_{n\in \Nor_T\K}\|\nabla F(T)+n\|_{\Lrhoz}
        \\
        =\nabla F(T)+\proj_{\Nor_T\K}(-\nabla F(T))
        =-\proj_{\Tan_T\K}(-\nabla F(T)),
    \end{multline}
    where we used the Moreau decomposition \cref{eq:moreau} for the closed convex cone $\Nor_T\K$.
    By \cref{lem:frechet-diff}, $\nabla F(T)=\nablaa D(T_*\rho_0)\circ T$ for any $T\in\K$, which yields the desired result.
\end{proof}

\noindent With this, we are ready to prove the main result of this section.
\begin{theorem}[Existence of solutions to the constrained gradient flow]
\label{thm:flow-defini}
Let $\rho_0\in\PRdac$ and let $D:\PRd\to\bR$ be some functional such that
\begin{equation}
    \label{eq:assumption-diff}
    \tag{\textsc{h}$_\lambda$}
    \begin{array}{l}
    \text{$D$ is l.s.c.~with respect to the weak topology on $\PRd$, Wasserstein differentiable,}\\\text{and $\lambda$-convex along generalized geodesics with anchor point $\rho_0$,}
    \end{array}
\end{equation}
with $\lambda\in\bR$.
Then, for every $t_{\text{max}}>0$, there exists a solution $(T_t)_t\in H^1([0,t_{\text{max}}],\K)$ to the constrained gradient flow \cref{eq:constrained-flow}.
\end{theorem}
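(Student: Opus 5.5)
The plan is to recast \cref{eq:constrained-flow} as the gradient flow of the constrained functional $F_{\K}=F+\imath_\K$ in the Hilbert space $\LrhozRd$, and then invoke the classical existence theory for gradient flows of $\lambda$-convex, proper, l.s.c.\ functionals on Hilbert spaces (Brezis--Kōmura, or \cite[Theorem~4.0.4]{ambrosio2008gradient} together with \cite{rossi2006gradient}). That theory produces a locally Lipschitz (in particular $H^1$ on bounded intervals) curve $(T_t)_t$ with $T_0=\id$ satisfying $\partial_tT_t=-\partial^\circ F_{\K}(T_t)$ for a.e.\ $t$. By \cref{lem:2flows}, since $D$ is Wasserstein differentiable, $-\partial^\circ F_{\K}(T_t)=\proj_{\Tan_{T_t}\K}(-\nablaa D(T_t{}_*\rho_0)\circ T_t)$, which is exactly \cref{eq:constrained-flow}. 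Moreover $T_t\in\operatorname{Dom}(F_{\K})\subset\K$, so the curve lies in $H^1([0,t_{\text{max}}],\K)$. The initial datum $\id=\nabla\frac12\|x\|^2$ belongs to $\K$, and $F(\id)=D(\rho_0)$. If $\rho_0\notin\operatorname{Dom}(D)$, I would still obtain the flow from the closure of the domain, which is where the regularizing property of convex gradient flows is used; I would check this point but not dwell on it.

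This reduces the proof to three properties of $F_{\K}$ on $\LrhozRd$.

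First, $F_{\K}$ is proper and bounded from below, which follows directly from the standing assumptions on $D$.

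Second, $F_{\K}$ is lower semicontinuous. Suppose $T_n\to T$ in $\Lrhoz$. Then $\W_2(T_n{}_*\rho_0,T_*\rho_0)\le\|T_n-T\|_{\Lrhoz}\to0$, so the weak l.s.c.\ of $D$ gives $\liminf F(T_n)\ge F(T)$. The indicator $\imath_\K$ is l.s.c.\ because $\K$ is closed by \cref{prop:convex-closed}.

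Third, $F_{\K}$ is $\lambda$-convex on $\LrhozRd$. This is the key step. Since $\K$ is convex, it suffices to prove \cref{eq:cvx} for $S_0,S_1\in\K$. Because $\rho_0\in\PRdac$, \nameref*{thm:brenier} identifies $S_i$ with the OT map $\optmap{\rho_\smallzero}{\rho_i}$ from $\rho_0$ to $\rho_i\coloneqq S_i{}_*\rho_0$. The pushforward $((1-t)S_0+tS_1)_*\rho_0$ is then exactly the generalized geodesic \cref{eq:gen-geodesic-ac} between $\rho_1$ and $\rho_2$ with anchor point $\rho_0$. The assumed $\lambda$-convexity along generalized geodesics with anchor $\rho_0$, namely \cref{eq:cvx-wass-gen-ac}, therefore reads verbatim as
\[
F((1-t)S_0+tS_1)\le(1-t)F(S_0)+tF(S_1)-\tfrac\lambda2t(1-t)\|S_0-S_1\|_{\Lrhoz}^2.
\]
So $F$ is $\lambda$-convex on $\K$, and hence $F_{\K}$ is $\lambda$-convex on the whole space.

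I expect the main obstacle to be the justification of \cref{lem:2flows} at every point of the trajectory. It requires that the Fréchet subdifferential of $F_{\K}$ coincide with the convex-analytic subdifferential used by the Hilbert-space theory, and that $\nabla F(T)=\nablaa D(T_*\rho_0)\circ T$ on $\K$ (\cref{lem:frechet-diff}). For $\lambda$-convex functionals the Fréchet subdifferential agrees with the $\lambda$-convex subdifferential, so the minimal-norm selections match. This lets me identify the right derivative given by the abstract theorem with the projected gradient, holding for a.e.\ $t$ and in fact for every $t>0$.
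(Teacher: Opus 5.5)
Your proposal is correct and follows essentially the paper's argument. Like the paper, you reduce via \cref{lem:2flows} to the Cauchy problem $\partial_t T_t=-\partial^\circ F_{\K}(T_t)$, check that $F_{\K}$ is proper, strongly l.s.c.\ (via $\W_2(T_*\rho_0,S_*\rho_0)\leq\|T-S\|_{\Lrhoz}$ and closedness of $\K$) and $\lambda$-convex (via Brenier and generalized geodesics anchored at $\rho_0$, which is \cref{lem:cvx-simple}), and then apply Hilbert-space existence theory. The only difference is that the paper writes out the proximal scheme and cites Rossi--Savar\'e's minimizing-movement theorem, where you cite Brezis--K\={o}mura or the Ambrosio--Gigli--Savar\'e theory directly. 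Your hedge about $\rho_0\notin\operatorname{Dom}(D)$ is unnecessary since $D$ is real-valued here. In any case, starting from the closure of the domain would only give $H^1$ regularity away from $t=0$, not on $[0,t_{\text{max}}]$.
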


\noindent 
In order to prove this theorem, it is sufficient by \cref{lem:2flows} to prove the existence of solutions to the Cauchy problem
\begin{equation}
    \label{eq:cauchy}
    \left\{\begin{aligned}
         &\ T_0=\id   \\
         &\ \partial_t T_t= -\partial^\circ F_{\K}(T_t)\qquad \text{for a.e.~}t\in (0,t_{\text{max}}).
    \end{aligned}\right.
\end{equation}
For this, we rely on classical results on the theory of generalized minimizing movements on Hilbert spaces \cite{rossi2006gradient} (see also \cite{de1993new,ambrosio2008gradient,muratori2020gradient} for a more general setting). 
For that purpose, two useful lemmas are \cref{lem:cvx-simple,cor:lsc-topo}, which allow to transfer convexity and lower semicontinuity of $D$ on $\PRd$ to convexity and lower semicontinuity of $F$ on $\LrhozRd$.
\begin{proof}
We use the \emph{(generalized) minimizing movement scheme} technique, which consists in approximating the gradient flow via a proximal gradient descent scheme.
Let $\tau>0$ be some time step, $\smash{\widehat T_0}\coloneqq\id\in \K$, and define for $k\geq0$ the following proximal step
\begin{equation}
    \label{eq:JKO}
    \tag{\textsc{prox}$_\tau$}
    \widehat T_{k+1}^\tau\in\argmin_{T\in \LrhozRd}F_{\K}(T)+\frac1{2\tau}\|T-\widehat T_k^\tau\|^2_{\Lrhoz}.
\end{equation}
Let us write $J_\tau:\LrhozRd\to\bR$ the functional being minimized in \cref{eq:JKO}.

    \noindent
    $(i)$ \emph{Well-posedness of the proximal step.}
    Let us fix $\smash{\widehat T_k^\tau}$ some element of $\K$ and show that the minimization step \cref{eq:JKO} is well-defined as long as $\tau$ is sufficiently small.
    Let us first note that $F$ is l.s.c.~(with respect to the strong topology) on $\LrhozRd$ by \cref{cor:lsc-topo}. 
    Because $\K$ is closed (\cref{prop:convex-closed}), $\imath_\K$ is lower semicontinuous (see \cite[Example 1.25]{bauschke2017convex}), hence $F_{\K}$ is as well, and then $J_\tau$ too.
    Now, since $D$ is $\lambda$-convex along generalized geodesics with anchor point $\rho_0$, $F$ is $\lambda$-convex in $\K$ (\cref{lem:cvx-simple}). Because $\K$ is convex (\cref{prop:convex-closed}), $\imath_\K$ is convex (see \cite[Example 8.3]{bauschke2017convex}), hence $F_{\K}$ is $\lambda$-convex, which in turns implies that $J_\tau$ is $(\tau^{-1}+\lambda)$-convex in $\LrhozRd$. Choosing $\tau$ small enough so that $\tau^{-1}+\lambda>0$, one can then apply \cite[Lemma 2.4.8]{ambrosio2008gradient} and finally get that $J_\tau$ admits a (unique) minimum, which belongs to $\operatorname{Dom}(F_{\K})\subset \K$.
    
    \noindent $(ii)$ \emph{Convergence when $\tau\to0$.}
    One can then apply \cite[Theorem~2]{rossi2006gradient} with the proper, lower semicontinuous and $\lambda$-convex functional $F_{\K}:\LrhozRd\to\bR$ on the Hilbert space $\LrhozRd$ to obtain the existence of a sequence $\tau_\ell\to0$ and corresponding solutions $\smash{\widehat T_k^{\tau_\ell}}$ that converge as $\ell\to\infty$ to a solution of \cref{eq:cauchy} of time regularity $H^1$, which concludes the proof.
\end{proof}

\begin{remark}[Functionals on $\PRd$ satisfying \cref{eq:assumption-diff}]
\label{rem:functionals}
The following differentiable functionals on $\PRd$ satisfy the assumptions \cref{eq:assumption-diff} of \cref{thm:flow-defini}.
\begin{itemize}[leftmargin=7mm]
    \item \emph{Relative entropy.} The relative entropy with respect to some $\lambda$-log-concave measure $\gamma$, where $\lambda\in\bR$, is l.s.c.~and $\lambda$-convex along generalized geodesics \cite[Theorem~9.4.11]{ambrosio2008gradient} \cite[Corollary~15.7]{ambrosio2021lectures}.
    \item \emph{Relative integral functionals.} 
    More generally, let $f:[0,\infty)\to[0,\infty]$ be a convex and l.s.c.~function such that $s\mapsto f(e^{-s})e^s$ is convex and nonincreasing in $(0,\infty)$, and $\gamma$ be some log-concave measure. Then the functional $D(\rho)=\int_{\bR^d}f(\dd\rho/\dd\gamma)\dd \gamma$ is l.s.c.~and convex along generalized geodesics in $\PRd$, under mild additional conditions on $f$ \cite[Theorem~9.4.12, Remark~9.3.8]{ambrosio2008gradient}.
        
    \item \emph{Potential energies.} Functionals of the form $D(\rho)=\int_{\bR^d}V\dd\rho$, where $V$ is proper, l.s.c.~and $\lambda$-convex on $\bR^d$, are l.s.c.~and $\lambda$-convex along generalized geodesics in $\PRd$ \cite[Proposition~9.3.2]{ambrosio2008gradient}.
    
    \item \emph{Interaction energies.} Functionals of the form $D(\rho)=\smash{\int_{(\bR^d)^k}}W\dd\rho^{\otimes k}$, where $W$ is proper, l.s.c.~and convex on $(\bR^d)^k$, are l.s.c.~and convex along generalized geodesics in $\PRd$ \cite[Proposition~9.3.5]{ambrosio2008gradient}.

    \item \emph{Entropic optimal transport.} The entropic regularization $\operatorname{OT}_\eps$ of the OT problem \cref{eq:kantorovitch} as well as the Sinkhorn divergence $\operatorname{Sk}_\eps$, are l.s.c.~\cite{feydy2019interpolating} and $\lambda$-convex for some $\lambda<0$ on compact domains \cite[Theorem~4.1]{carlier2024displacement}. The same goes for minus these two functionals. \qedhere
\end{itemize}
\end{remark}

\subsection{Convergence of the constrained gradient flow}
\label{sec:convergence}
The main result of this section is \cref{thm:convergence}, which states the convergence of the constrained gradient flow \cref{eq:constrained-flow} to the OT map $\optmap{\rho_\smallzero}{\gamma}$ under standard convexity assumptions on the functional $D$, with a convergence rate that does not depend on the ambient dimension $d$.
The proof of this result is similar to that of the standard convergence of gradient flows of functions that are star-convex around their minimizer on Hilbert spaces, with the slight but \emph{crucial} modification that here the time-update is given by \emph{a projection of} the gradient of $F$, and not the gradient itself.
We then instantiate these convergence results to the relative entropy (\cref{cor:entropy}), answering a question from \citeauthor{modin2016geometry}~\cite[Section 4.1.1]{modin2016geometry}.

In order to prove convergence of the constrained gradient flow in the following, we will need to ask for some convexity of $D$ along \emph{generalized geodesics with anchor point $\rho_0$ and endpoint $\gamma$}, that is, along curves of the form
\begin{equation}
    \label{eq:gen-geod-rho0-gamma}
    \rho_t=[(1-t)T+t\optmap{\rho_\smallzero}{\gamma}]_*\rho_0,
\end{equation}
where $T$ is any element of $\K$.
This assumption is different from the standard assumption of plain geodesic convexity in $\PRd$. 
Yet, note that both of these notions are implied by the more stringent---yet also standard---assumption of convexity along \emph{all generalized geodesics} \cref{eq:gen-geodesic-ac}.
Also note that there exist functionals on $\PRd$ that are convex along generalized geodesics of the form \cref{eq:gen-geod-rho0-gamma} and that are \emph{not} convex along \emph{all} generalized geodesics: for instance, the squared Wasserstein distance $\rho\mapsto \W_2(\rho,\rho_0)^2$ \cite[Remark 9.2.8]{ambrosio2008gradient}.
\cref{thm:convergence} below states the convergence result, which can be understood as follows: if $D$ is $\lambda$-convex along curves of the form \cref{eq:gen-geod-rho0-gamma} with $\lambda>0$, then the flow converges to the OT map; and if $D$ is merely convex along such curves, then the flow converges under the additional assumption of \emph{power-type growth} on $D$:
\begin{equation}
    \tag{\textsc{pg}}
    \label{eq:condition-QG}
    \text{there exist $c,\alpha>0$ such that }\quad \|\optmap{\rho_\smallzero}{\rho}-\optmap{\rho_\smallzero}{\gamma}\|^2_{\Lrhoz}\leq c \big(D(\rho)-D(\gamma)\big)^\alpha,
\end{equation}
which can be satisfied by the relative entropy under some conditions on $\rho_0$ (see \cref{lem:qg-entropy}).

\begin{theorem}[Convergence of the constrained gradient flow]
    \label{thm:convergence}
    Let $\rho_0\in\PRdac$.
    Suppose that $D$ admits a unique minimizer $\gamma$ in $\PRd$.
    Let $\optmap{\rho_\smallzero}{\gamma}$ be the OT map between $\rho_0$ and $\gamma$, and
    suppose that there exists a solution $(T_t)_t$ to the flow \cref{eq:constrained-flow}. 
     Then
    \begin{enumerate}[$(i)$,leftmargin=*]
        \item The map $t \mapsto D(T_t{}_*\rho_0)$ is nonincreasing.
        \item Suppose that $D$ is convex along curves of the form \cref{eq:gen-geod-rho0-gamma}. Then
        \begin{equation}
            \label{eq:result-cvg-F}
            \tag{$ii$.a}
            \text{for a.e.~$t\geq0$,}\qquad
            D(T_t{}_*\rho_0)-D(\gamma)\leq \smash{\frac1{2t}}\W_2(\rho_0,\gamma)^2,
        \end{equation}
        and if $D$ metrizes the weak convergence in $\PRd$, then $T_t\to \optmap{\rho_\smallzero}{\gamma}$ strongly in $\LrhozRd$ as $t\to\infty$.
        If additionally the power-type growth condition \cref{eq:condition-QG}
        is satisfied along the flow $(T_t)_t$, then
        \begin{equation}
            \label{eq:result-cvg-T}
            \tag{$ii$.b}
            \text{for a.e.~$t\geq0$,}\qquad
            \|T_t-\optmap{\rho_\smallzero}{\gamma}\|^2_{\Lrhoz}\leq \frac c{2^\alpha t^{\alpha}} \W_2(\rho_0,\gamma)^{2\alpha}.
        \end{equation}
        \item Suppose that $D$ is $\lambda$-convex
        along curves of the form \cref{eq:gen-geod-rho0-gamma},
        with $\lambda>0$.
        Then
        \begin{equation}
            \label{eq:result-cvg-F-strong}
            \tag{$iii$.a}
            \text{for a.e.~$t\geq0$,}\qquad
            D(T_t{}_*\rho_0)-D(\gamma)\leq e^{-2\lambda t}\big(D(\rho_0)-D(\gamma)\big)
        \end{equation}
        and
        \begin{equation}
            \label{eq:result-cvg-T-strong}
            \tag{$iii$.b}
            \text{for a.e.~$t\geq0$,}\qquad
            \|T_t-\optmap{\rho_\smallzero}{\gamma}\|^2_{\Lrhoz}\leq
            \frac4\lambda e^{-2\lambda t}\big(D(\rho_0)-D(\gamma)\big).
        \end{equation}
    \end{enumerate}
\end{theorem}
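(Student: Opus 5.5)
Throughout, write $F=D\circ\pi$, $T^\star\coloneqq\optmap{\rho_\smallzero}{\gamma}\in\K$, $v_t\coloneqq-\nablaa D(T_t{}_*\rho_0)$ and $w_t\coloneqq\partial_tT_t$. By \cref{lem:frechet-diff}, $\nabla F(T_t)=-v_t\circ T_t$. Since $(T_t)_t\in H^1$, the map $t\mapsto F(T_t)$ is absolutely continuous, with
\[
\tfrac{d}{dt}F(T_t)=\langle \nabla F(T_t),w_t\rangle_{\Lrhoz}=-\langle v_t\circ T_t,w_t\rangle_{\Lrhoz}=-\|w_t\|_{\Lrhoz}^2
\]
by \cref{lem:charac}$(i)$. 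This chain rule holds for a.e.~$t$. It can be justified through the Fréchet differentiability of $F$ along the $H^1$ curve, or alternatively by using the $\lambda$-convexity of $F_\K$ and the chain rule for convex functionals in \cite{rossi2006gradient}. Part $(i)$ follows directly.

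The key inequality comes from applying \cref{lem:charac}$(ii)$ with $S=T^\star$:
\[
\langle -\nabla F(T_t),T^\star-T_t\rangle\le\langle w_t,T^\star-T_t\rangle .
\]
By \cref{lem:cvx-simple}, convexity of $D$ along curves \cref{eq:gen-geod-rho0-gamma} translates into $\lambda$-convexity of $F$ along the segment $[T_t,T^\star]\subset\K$. Differentiating the convexity inequality at $s=0$ gives
\[
F(T^\star)\ge F(T_t)+\langle\nabla F(T_t),T^\star-T_t\rangle+\tfrac\lambda2\|T^\star-T_t\|^2 .
\]
Combining the two yields
\[
F(T_t)-F(T^\star)+\tfrac\lambda2\|T_t-T^\star\|^2\le\langle w_t,T_t-T^\star\rangle=\tfrac12\tfrac{d}{dt}\|T_t-T^\star\|^2 .\tag{$\ast$}
\]
This is the evolution variational inequality.

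For $(ii)$, take $\lambda=0$ and integrate $(\ast)$ over $[0,t]$. Since $F(T_s)$ is nonincreasing, $t\,(F(T_t)-F(T^\star))\le\frac12\|\id-T^\star\|^2=\frac12\W_2(\rho_0,\gamma)^2$, which is (\ref{eq:result-cvg-F}). Estimate (\ref{eq:result-cvg-T}) then follows by plugging into \cref{eq:condition-QG}, since $T_t=\optmap{\rho_\smallzero}{T_t{}_*\rho_0}$ because $T_t\in\K$.

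For strong convergence without \cref{eq:condition-QG}, I argue in four steps:
\begin{enumerate}[$(a)$]
\item $D(T_t{}_*\rho_0)\to D(\gamma)$, so by metrization $\rho_t\coloneqq T_t{}_*\rho_0\rightharpoonup\gamma$.
\item By $(\ast)$, $\|T_t-T^\star\|$ is bounded (indeed nonincreasing).
\item Stability of OT maps \cite[Corollary~5.23]{villani2009optimal}, as used in \cref{prop:convex-closed}, gives $T_t\to T^\star$ in probability.
\item Convergence of second moments, $\|T_t\|_{\Lrhoz}^2=\int|y|^2\dd\rho_t\to\int|y|^2\dd\gamma=\|T^\star\|^2$, upgrades convergence in probability to strong $L^2$ convergence (via Vitali, or weak convergence plus convergence of norms).
\end{enumerate}

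For $(iii)$, two ingredients are needed. The first is a Polyak–Łojasiewicz type bound. From the first-order inequality above with $\lambda>0$, setting $u\coloneqq T^\star-T_t\in\Tan_{T_t}\K$ and using $\langle v_t\circ T_t,u\rangle\le\langle w_t,u\rangle$, we get
\[
F(T^\star)-F(T_t)\ge-\langle w_t,u\rangle+\tfrac\lambda2\|u\|^2\ge-\tfrac1{2\lambda}\|w_t\|^2 .
\]
Hence $\|w_t\|^2\ge2\lambda(F(T_t)-F(T^\star))$. Combining this with $\frac d{dt}F=-\|w_t\|^2$ and applying Grönwall gives (\ref{eq:result-cvg-F-strong}).

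The second ingredient is a quadratic growth bound. $\lambda$-convexity along the segment, together with the first-order optimality of $T^\star$ on $\K$, gives $F(T_t)-F(T^\star)\ge\frac\lambda2\|T_t-T^\star\|^2$. This optimality holds because $T^\star$ minimizes $F$ over $\K$, so $\langle\nabla F(T^\star),T-T^\star\rangle\ge0$ for all $T\in\K$. That yields the bound with constant $2/\lambda$, so the stated $4/\lambda$ is comfortably satisfied. If the gradient at $T^\star$ is problematic, the same conclusion follows from the convexity inequality at $s\to1$ used in the midpoint form. Note also that $F(\id)=D(\rho_0)$.

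The main obstacle is the chain rule for $F(T_t)$ along the $H^1$ curve, together with differentiating the convexity inequality along the segment, given only Wasserstein differentiability. I would handle it through \cref{lem:frechet-diff} and the $\lambda$-convexity of $F_\K$, invoking the standard absolute-continuity results for convex functionals in \cite{rossi2006gradient,ambrosio2008gradient}.
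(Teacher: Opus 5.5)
Your argument follows the paper's proof: the chain rule with \cref{lem:charac}~$(i)$ for monotonicity, then \cref{lem:charac}~$(ii)$ with $S=T^\star$ combined with star-convexity, integration of the resulting Lyapunov inequality for $(ii)$, and a Polyak--\L{}ojasiewicz bound plus Gr\"onwall for $(iii)$.

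\textbf{Sign error in $(\ast)$.} Your two displayed ingredients actually combine to
\[
F(T_t)-F(T^\star)+\tfrac\lambda2\|T_t-T^\star\|^2\le\langle w_t,T^\star-T_t\rangle=-\tfrac12\tfrac{d}{dt}\|T_t-T^\star\|^2,
\]
not to $\langle w_t,T_t-T^\star\rangle$ as written. With your sign, $\|T_t-T^\star\|$ would be nondecreasing. Your later uses (the $\frac1{2t}$ bound and step $(b)$) already rely on the correct sign, so only the display needs fixing.

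\textbf{Differences from the paper.} Both are in your favour.
\begin{itemize}
\item For quadratic growth you do not need differentiability of $F$ at $T^\star$. Since the segment $[T_t,T^\star]$ lies in $\K$,
\[
0\le F\big((1-s)T_t+sT^\star\big)-F(T^\star)\le(1-s)\big(F(T_t)-F(T^\star)\big)-\tfrac\lambda2 s(1-s)\|T_t-T^\star\|^2 .
\]
Dividing by $1-s$ and letting $s\to1$ gives constant $2/\lambda$. The paper evaluates at $s=\frac12$ and gets $4/\lambda$.
\item Your strong-convergence argument in $(ii)$ is explicit: stability in probability of OT maps with fixed source $\rho_0$, then convergence of second moments to upgrade to $\Lrhoz$ convergence. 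The paper gets the same conclusion by citing continuity of the OT map with respect to the target measure.
\end{itemize}
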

\noindent To prove the results, it is useful to see the constrained gradient flow as a gradient flow of the lifted functional $F$ on $\LrhozRd$, since proofs of convergence are easier in Hilbert spaces. 
Observe that
$D$ has a unique minimizer in $\PRd$ if and only if $F$ has a unique minimizer in $\K$ (see \cref{lem:minimizers}), and that if $D$ is convex along curves of the form \cref{eq:gen-geod-rho0-gamma}, then $F$ is star-convex around $\optmap{\rho_\smallzero}{\gamma}$ on $\K$ (see \cref{lem:cvx-simple}).
The proof of the convergence of the constrained gradient flow can therefore be done similarly to that of the standard convergence of gradient flows of functions that are star-convex around their minimizer on Hilbert spaces, except that the time-update is given by \emph{a projection of} the gradient of $F$, and not the gradient itself.
\begin{proof}
Let $v_t\coloneqq -\nablaa D(T_t{}_*\rho_0)$ and let $w_t\coloneqq \partial_tT_t$ be the solution of the constrained gradient flow \cref{eq:constrained-flow} at time $t$.
Recall that the gradient of $F$ at $T_t$ in the Hilbert space $\LrhozRd$ is given by
\begin{equation}
\nabla F(T_t)=\nablaa D(T_t{}_* \rho_0)\circ T_t =- v_t \circ T_t
\end{equation}
(see \cref{lem:frechet-diff}). 
Let us prove $(i)$, then $(iii)$, and finally $(ii)$.
\\
$(i)$ For a.e.~$t$,
        \begin{equation}
            \label{eq:decrease-f-T}
            \frac{\dd}{\dd t}\big(F(T_t)-F(\optmap{\rho_\smallzero}{\gamma})\big)
            =\langle \nabla F(T_t), \partial_t T_t\rangle_{\Lrhoz}
            =\langle -v_t\circ T_t, w_t\rangle_{\Lrhoz}
            \stackrel{\smash{(\star)}}= -\|w_t\|^2_{\Lrhoz}\leq0,
        \end{equation}
        where in $(\star)$ we applied \cref{lem:charac}, \cref{item:charac1}. This proves the result.
    \\
    $(iii)$ Assume that $D$ is $\lambda$-convex along curves of the form \cref{eq:gen-geod-rho0-gamma}, with $\lambda>0$. By \cref{lem:cvx-simple}, this means that $F$ is $\lambda$-star-convex around $\optmap{\rho_\smallzero}{\gamma}$ on $\K$, that is, for a.e.~$t$,
    \begin{equation}
        \label{eq:1st-order-conv}
        F(T_t)-F(\optmap{\rho_\smallzero}{\gamma})\leq \langle T_t-\optmap{\rho_\smallzero}{\gamma},-v_t\circ T_t\rangle_{\Lrhoz}-\frac\lambda2\|T_t-\optmap{\rho_\smallzero}{\gamma}\|^2_{\Lrhoz}
        \stackrel{\smash{(\star\star)}}=\langle T_t-\optmap{\rho_\smallzero}{\gamma},-w_t\rangle_{\Lrhoz}-\frac\lambda2\|T_t-\optmap{\rho_\smallzero}{\gamma}\|^2_{\Lrhoz},
    \end{equation}
    where in $(\star\star)$ we applied \cref{lem:charac}, \cref{item:charac2}.
        To show convergence of the values of $F$, one can apply Young's inequality to \cref{eq:1st-order-conv}:
        \begin{equation}
            \label{eq:PL-from-SC-T}
            F(T_t)-F(\optmap{\rho_\smallzero}{\gamma})
            \stackrel{\smash{\cref{eq:1st-order-conv}}}\leq \langle T_t-\optmap{\rho_\smallzero}{\gamma},-w_t\rangle_{\Lrhoz}-\frac\lambda2\|T_t-\optmap{\rho_\smallzero}{\gamma}\|^2_{\Lrhoz}
            \leq \frac{1}{2\lambda}\|w_t\|^2_{\Lrhoz},
        \end{equation}
        which, as a side note, is the Polyak--Łojasiewicz condition for $F$ constrained to $\K$ \cite{polyak1964gradient,lojasiewicz1963topological}.
        Then for a.e.~$t$,
        \begin{equation}
            \frac{\dd}{\dd t}\big(F(T_t)-F(\optmap{\rho_\smallzero}{\gamma})\big)
            \stackrel{\smash{\cref{eq:decrease-f-T}}}= -\|w_t\|^2_{\Lrhoz}
            \stackrel{\smash{\cref{eq:PL-from-SC-T}}}\leq -2\lambda\big(F(T_t)-F(\optmap{\rho_\smallzero}{\gamma})\big),
        \end{equation}
        and Grönwall's lemma then yields the exponential convergence of $F(T_t)$ to $F(\optmap{\rho_\smallzero}{\gamma})$ as desired for \cref{eq:result-cvg-F-strong}.
        To show convergence of $T_t$,
        let us use once again the $\lambda$-convexity of $F$, this time along the curve $(1-s)T_t+s\optmap{\rho_\smallzero}{\gamma}$:
        \begin{equation}
            F((1-s)T_t+s\optmap{\rho_\smallzero}{\gamma})\leq (1-s)F(T_t)+sF(\optmap{\rho_\smallzero}{\gamma})-\frac\lambda 2s(1-s)\|T_t-\optmap{\rho_\smallzero}{\gamma}\|^2_{\Lrhoz}.
        \end{equation}
        Evaluating at $s=\frac12$ and using the optimality of $\optmap{\rho_\smallzero}{\gamma}$ then yields for a.e.~$t$
        \begin{equation}
            0\leq F((T_t+\optmap{\rho_\smallzero}{\gamma})/2)-F(\optmap{\rho_\smallzero}{\gamma})\leq  \frac12\big(F(T_t)-F(\optmap{\rho_\smallzero}{\gamma})\big)-\frac\lambda 8\|T_t-\optmap{\rho_\smallzero}{\gamma}\|^2_{\Lrhoz},
        \end{equation}
        and finally 
        \begin{equation}
            \|T_t-\optmap{\rho_\smallzero}{\gamma}\|^2_{\Lrhoz}\leq \frac4\lambda \big(F(T_t)-F(\optmap{\rho_\smallzero}{\gamma})\big)\leq \frac4\lambda e^{-2\lambda t}\big(F(\id)-F(\optmap{\rho_\smallzero}{\gamma})\big),
        \end{equation}
        which gives \cref{eq:result-cvg-T-strong} and therefore completes the proof for $(iii)$.
        \\
        $(ii)$ Assume now that $D$ is merely convex along curves of the form \cref{eq:gen-geod-rho0-gamma}. By \cref{lem:cvx-simple}, this means that $F$ is star-convex around $\optmap{\rho_\smallzero}{\gamma}$ on $\K$, and \cref{eq:1st-order-conv} holds with $\lambda=0$. Consider then the Lyapunov function $V(t)=\frac12\|T_t-\optmap{\rho_\smallzero}{\gamma}\|^2_{\Lrhoz}$. Its time derivative (a.e.~in $t$) is
        \begin{equation}
            \label{eq:V-derivative-T}
            \frac{\dd}{\dd t}V(t)
            =\langle T_t-\optmap{\rho_\smallzero}{\gamma},w_t\rangle_{\Lrhoz}
            \stackrel{\smash{\cref{eq:1st-order-conv}}}\leq-\big(F(T_t)-F(\optmap{\rho_\smallzero}{\gamma})\big)\leq0,
        \end{equation}
        which ensures that $V$ is nonincreasing.
        Integrating over time yields for a.e.~$t$
        \begin{equation}
            t\big(F(T_t)-F(\optmap{\rho_\smallzero}{\gamma})\big)\leq\int_0^t\big(F(T_s)-F(\optmap{\rho_\smallzero}{\gamma})\big)\dd s\leq -\int_0^t\frac{\dd}{\dd s}V(s)\dd s=V(0)-V(t),
        \end{equation}
        and therefore
        \begin{equation}
            \label{eq:diffFconvex}
            F(T_t)-F(\optmap{\rho_\smallzero}{\gamma})\leq \frac1{2t}\big(\|\id-\optmap{\rho_\smallzero}{\gamma}\|^2_{\Lrhoz}-\|T_t-\optmap{\rho_\smallzero}{\gamma}\|^2_{\Lrhoz}\big)\leq\frac1{2t}\|\id-\optmap{\rho_\smallzero}{\gamma}\|^2_{\Lrhoz},
        \end{equation}
        which is the desired result \cref{eq:result-cvg-F}. If $D$ metrizes the weak convergence in $\PRd$, then $T_t{}_*\rho_0\rightharpoonup \gamma$, and by continuity of the mapping $\rho\mapsto \optmap{\rho}{\gamma}$
        (see e.g., \cite[Proposition~1.4]{letrouit2025lectures} for a detailed proof), $T_n\to \optmap{\rho_\smallzero}{\gamma}$ strongly in $\LrhozRd$.
        In the case where the condition \cref{eq:condition-QG} is satisfied, using \cref{eq:condition-QG} and the convergence of the values \cref{eq:diffFconvex} yields the desired \cref{eq:result-cvg-T}.
\end{proof}
\begin{lemma}[The relative entropy satisfies \cref{eq:condition-QG}]
    \label{lem:qg-entropy}
    Suppose that the support of $\rho_0$ is a John domain\footnote{Formally, a domain is a John domain \cite{john1961rotation,martio1979injectivity} if it is possible to move from one point to another while staying quantitatively away from the boundary. For instance, bounded domains with Lipschitz boundary or bounded convex sets are John domains. John domains are necessarily bounded. See \cite[Section 1.2]{letrouit2024gluing} and references therein for an account on John domains.}, that $\rho_0$ has a density that is bounded above and below by positive constants. 
    Then the relative entropy satisfies \cref{eq:condition-QG} with $\alpha=\frac16$ for all compactly supported $\rho$ and $\gamma$.
\end{lemma}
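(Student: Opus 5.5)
We will obtain \cref{eq:condition-QG} by chaining three inequalities: a stability estimate for OT maps, Talagrand's transport inequality, and the fact that $D(\gamma)=H(\gamma\midd\gamma)=0$.

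\emph{Stability of OT maps.} Since the support of $\rho_0$ is a John domain and $\rho_0$ has a density bounded above and below by positive constants, we invoke the quantitative stability result for Brenier maps with respect to the target measure (Delalande--Mérigot, refined by Letrouit--Mérigot \cite{letrouit2024gluing}). For $\rho$ and $\gamma$ supported in a fixed compact set, it gives
\begin{equation}
    \|\optmap{\rho_\smallzero}{\rho}-\optmap{\rho_\smallzero}{\gamma}\|^2_{\Lrhoz}\leq C\,\W_1(\rho,\gamma)^{1/3}.
\end{equation}
The constant $C$ depends on the domain, on the density bounds of $\rho_0$, and on the size of the compact set containing the supports of $\rho$ and $\gamma$.

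\emph{Talagrand inequality.} Because everything is compactly supported, $\W_1\leq\W_2$. We then need a transport-entropy inequality $\W_2(\rho,\gamma)^2\leq C' H(\rho\midd\gamma)$.
\begin{itemize}
    \item In the running setting, $\gamma$ is $\lambda$-log-concave with $\lambda>0$, and Talagrand's inequality holds with $C'=2/\lambda$.
    \item Otherwise, on compact supports, we can use $\W_1(\rho,\gamma)\leq \operatorname{diam}\cdot\|\rho-\gamma\|_{TV}$ together with Pinsker's inequality, which gives $\W_1\lesssim H^{1/2}$ directly.
\end{itemize}
Either route yields $\W_1(\rho,\gamma)\lesssim H(\rho\midd\gamma)^{1/2}$.

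\emph{Combination.} Plugging this into the stability estimate gives
\begin{equation}
    \|\optmap{\rho_\smallzero}{\rho}-\optmap{\rho_\smallzero}{\gamma}\|^2_{\Lrhoz}\leq C\,\W_1^{1/3}\leq c\,H(\rho\midd\gamma)^{1/6}=c\,\big(D(\rho)-D(\gamma)\big)^{1/6},
\end{equation}
which is exactly \cref{eq:condition-QG} with $\alpha=\frac16$.

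The Pinsker route via $\W_1$ is the cleanest, since it needs no log-concavity and only uses compact support; this is presumably why the statement restricts to compactly supported $\rho$ and $\gamma$.

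The main obstacle is invoking the stability theorem in exactly the right form. The plan is to cite the version that holds for general John domains with densities bounded above and below, with an exponent on $\W_1$ that produces $1/6$ after the square-root loss from Pinsker. I will also need to check that its constant depends only on a bound on the supports, so that $c$ is uniform over all compactly supported $\rho$ in a fixed ball. If the available statement has a different exponent, the value of $\alpha$ adjusts accordingly, but the three-step structure stays the same.
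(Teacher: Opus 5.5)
Your proposal is correct and follows the paper's proof. The paper also combines Pinsker's inequality with $\W_1(\rho,\gamma)\le M\,\|\rho-\gamma\|_{\text{TV}}$ (where $M$ bounds the diameter of the supports) to get $\W_1(\rho,\gamma)^2\lesssim H(\rho\midd\gamma)$, and then plugs this into the Letrouit--Mérigot stability bound $\|\optmap{\rho_\smallzero}{\rho}-\optmap{\rho_\smallzero}{\gamma}\|^2_{\Lrhoz}\le \tilde c\,\W_1(\rho,\gamma)^{1/3}$ to obtain $\alpha=\frac16$. Your Talagrand branch is unnecessary (and $\W_1\le\W_2$ holds without any compactness), but the Pinsker route alone is exactly the paper's argument, including the implicit dependence of the constant on a fixed compact set containing the supports.
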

\begin{proof}
    Using Pinsker's inequality \cite{csiszar1963informationstheoretische,kullback1997information,pinsker1964information} and \cite[Particular case 6.16]{villani2009optimal} yields
    \begin{equation}
        H(\rho\midd\gamma)\geq 2\|\rho-\gamma\|_{\text{TV}} \geq\frac2{M^2}\W_1(\rho,\gamma)^2,
    \end{equation}
    where $M$ is an upper bound on the diameter of the supports of $\rho_0$ and $\gamma$.
    A recent result by \citeauthor{letrouit2024gluing}~\cite[Theorem~1.7]{letrouit2024gluing} then states that for all $\rho$ and $\gamma$ satisfying the conditions of \cref{lem:qg-entropy}\footnote{These assumptions can be relaxed. $(i)$ The compactness assumption for the supports of $\rho_t$ and $\gamma$ can be relaxed to finiteness of the $p$\textsuperscript{th}-order moment for some $p\in\bR$ if $p\geq 4$ and $p>d$, when replacing the exponent $\smash{\frac13}$ by an exponent $\smash{\frac p{3p+8d}}$ \cite[Corollary~4.4]{delalande2023quantitative}. $(ii)$ The boundedness assumption for $\rho_0$ can be relaxed to some control of the decay of $\rho_0$ when approaching the boundary of the domain, when replacing the exponent $\smash{\frac13}$ by an exponent $\smash{\frac13-\eta}$ for some $\eta>0$ \cite[Theorem~1.10]{letrouit2024gluing}.}, then   
    \begin{equation}
    \|\optmap{\rho_\smallzero}{\rho}-\optmap{\rho_\smallzero}{\gamma}\|_{\Lrhoz}^2\leq \tilde c \W_1(\rho,\gamma)^{1/3},
    \end{equation}
    hence the result.
\end{proof}

\begin{remark}[Other functionals satisfying \cref{eq:condition-QG}] \label{rem:qg_general}
    \cref{thm:convergence}, \cref{eq:result-cvg-T} requires two conditions on the functional $D$: the power-type growth condition \cref{eq:condition-QG}, and convexity along curves of the form \cref{eq:gen-geod-rho0-gamma}. The former is not very restrictive: using a similar argument as in \cref{lem:qg-entropy}, one can show that it is satisfied on bounded domains by the TV norm, the Hellinger distance, the flat norm \cite{hanin1992kantorovich}, the Dudley metric \cite{dudley1945real}, or MMD functionals with Sobolev kernel of regularity $\smash{s\geq \frac d2+1}$ (or even more general kernels, see \cite{fiedler2023lipschitz}). The latter, however, is far more stringent: few functionals are known to be convex apart from those mentioned in \cref{rem:functionals}.
\end{remark}
\begin{remark}[When does $D$ metrize weak convergence?]
    For $D$ to metrize the weak convergence in the space $\PRd$ of probability measures with finite second-order moment, it is for instance sufficient that it bounds the $\W_2$ distance, which is true whenever \cref{eq:condition-QG} is satisfied.
    On bounded domains, the weak and the narrow topologies on $\PRd$ coincide \cite[Corollary~6.13]{villani2009optimal}, and one might come up with other functionals on $\PRd$ that metrize this topology (e.g., integral probability metrics \cite{fortet1953convergence,muller1997integral,sriperumbudur2009integral} or MMD functionals \cite{simon2023metrizing}). 
\end{remark}

\noindent Let us now instantiate the results of \cref{thm:flow-defini,thm:convergence} to the relative entropy.
\begin{corollary}[Constrained gradient flow for the relative entropy]
    \label{cor:entropy}
    Let $D:\rho\mapsto H(\rho\midd\gamma)$ be the relative entropy with respect to some $\lambda$-log-concave measure $\gamma\in\PRd$, where $\lambda\in\bR$. Then the constrained gradient flow \cref{eq:constrained-flow} admits a solution $(T_t)_t$. Moreover, we have the following:
    \begin{enumerate}[$(i)$,leftmargin=*]
        \item Assume $\lambda=0$. Then, as $t\to\infty$, $H(T_t{}_*\rho_0\midd\gamma)\to0$ with convergence rate $O(t^{-1})$.
        If additionally the assumptions of \cref{lem:qg-entropy} are satisfied, then $T_t\to\optmap{\rho_\smallzero}{\gamma}$ strongly in $\LrhozRd$ with convergence rate $O(t^{-1/6})$.
        \item Assume $\lambda>0$. Then, as $t\to\infty$, $H(T_t{}_*\rho_0\midd\gamma)\to0$ with convergence rate $O(e^{-2\lambda t})$ and $T_t\to\optmap{\rho_\smallzero}{\gamma}$ strongly in $\LrhozRd$ with convergence rate $O(e^{-2\lambda t})$.\qedhere
    \end{enumerate}
\end{corollary}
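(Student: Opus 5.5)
The corollary follows by checking the hypotheses of \cref{thm:flow-defini,thm:convergence} for $D=H(\cdot\midd\gamma)$ and then reading off the rates.

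\emph{Existence.} By \cref{rem:functionals}, the relative entropy with respect to a $\lambda$-log-concave $\gamma$ is l.s.c.\ for the weak topology and $\lambda$-convex along generalized geodesics. In particular it is $\lambda$-convex along generalized geodesics anchored at $\rho_0\in\PRdac$. It is proper, since $\gamma$ lies in its domain, and bounded below by $0$. The differentiability required in \cref{eq:assumption-diff} is the delicate point: the entropy is only finite on measures with densities, and its Wasserstein gradient $\nabla\log\rho+\nabla V$ exists only on a sub-domain. I would handle this as the paper implicitly does. In the proof of \cref{thm:flow-defini}, the minimizing-movement argument of \cite{rossi2006gradient} only uses that $F_\K$ is proper, l.s.c.\ and $\lambda$-convex; the flow is then read through $-\partial^\circ F_\K$. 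Where $D$ is differentiable along the flow, \cref{lem:2flows} identifies this with \cref{eq:constrained-flow}. This is the step I expect to require the most care, and I would state it as an interpretation of \cref{eq:constrained-flow} in the subdifferential sense.

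\emph{Unique minimizer and convexity along \cref{eq:gen-geod-rho0-gamma}.} We have $H(\rho\midd\gamma)\ge0$, with equality if and only if $\rho=\gamma$, so $\gamma$ is the unique minimizer and $D(\gamma)=0$. Convexity along generalized geodesics implies convexity along the special curves \cref{eq:gen-geod-rho0-gamma}, with the same modulus $\lambda$. Hence \cref{thm:convergence} applies: part $(ii)$ when $\lambda=0$ and part $(iii)$ when $\lambda>0$.

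\emph{Rates.}
\begin{itemize}
\item For $\lambda=0$, \cref{eq:result-cvg-F} gives $H(T_t{}_*\rho_0\midd\gamma)\le \W_2(\rho_0,\gamma)^2/(2t)=O(t^{-1})$.
\item Under the assumptions of \cref{lem:qg-entropy}, \cref{eq:condition-QG} holds with $\alpha=1/6$. I would additionally need $T_t{}_*\rho_0$ to be compactly supported along the flow; this follows, for instance, if $\gamma$ is compactly supported and the flow preserves a bounded domain, and I would list it as an implicit hypothesis. Then \cref{eq:result-cvg-T} yields $\|T_t-\optmap{\rho_\smallzero}{\gamma}\|^2_{\Lrhoz}=O(t^{-1/6})$.
\item For $\lambda>0$, \cref{eq:result-cvg-F-strong,eq:result-cvg-T-strong} give the two $O(e^{-2\lambda t})$ rates, with constant $H(\rho_0\midd\gamma)$. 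This constant is finite because $\rho_0$ has a density and finite second moment and $V$ is convex.
\end{itemize}
Both stated rates should be read for the squared $L^2_{\rho_0}$ distance, as in the theorem.
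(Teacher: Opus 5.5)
Your route is the paper's. The $\lambda$-convexity of the entropy along generalized geodesics (for $\lambda$-log-concave $\gamma$) gives \cref{eq:assumption-diff}, and by restriction convexity along \cref{eq:gen-geod-rho0-gamma}; $\gamma$ is the unique minimizer; \cref{thm:flow-defini} then gives existence, and \cref{thm:convergence}\,$(ii)$--$(iii)$ give the rates, with \cref{lem:qg-entropy} supplying \cref{eq:condition-QG} for $\alpha=\frac16$.

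Several of your caveats are sound refinements of the paper's one-line proof. The entropy is not Wasserstein differentiable on all of $\PRd$, so what the minimizing-movement argument produces is $\partial_tT_t=-\partial^\circ F_{\K}(T_t)$. The proof of \cref{thm:convergence} still runs in that reading: $F_{\K}$ is $\lambda$-convex and l.s.c., so its minimal subgradient satisfies the global $\lambda$-convex subgradient inequality and the energy identity. The stated rates are indeed for $\|T_t-\optmap{\rho_\smallzero}{\gamma}\|^2_{\Lrhoz}$. The extra hypothesis you list in case $(i)$ is actually automatic. \cref{lem:qg-entropy} requires $\gamma$ to be compactly supported, so $V=+\infty$ off a compact convex set. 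Finiteness of $H(T_t{}_*\rho_0\midd\gamma)$ along the flow then forces $T_t{}_*\rho_0\ll\gamma$, hence $\operatorname{supp}(T_t{}_*\rho_0)\subset\operatorname{supp}\gamma$. This gives \cref{eq:condition-QG} along the flow with a uniform constant.

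One claim is false, though: $H(\rho_0\midd\gamma)<\infty$ does \emph{not} follow from $\rho_0$ having a density with finite second moment and $V$ being convex. Take $d=1$, $\gamma=N(0,1)$, and $\rho_0(x)=(x\log^2x)^{-1}$ on $(0,e^{-1})$, $\rho_0=0$ elsewhere. This is a compactly supported probability density. Yet the substitution $u=-\log x$ gives $\int\rho_0\log\rho_0=\int_1^\infty u^{-2}(u-2\log u)\dd u=+\infty$, so $H(\rho_0\midd\gamma)=+\infty$. The potential term $\int V\dd\rho_0$ can also diverge, when $V$ grows faster than quadratically or is $+\infty$ on a set charged by $\rho_0$.

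This matters twice. First, $H(\rho_0\midd\gamma)$ is the prefactor in \cref{eq:result-cvg-F-strong,eq:result-cvg-T-strong}, which are vacuous if it is infinite. Second, the $H^1([0,t_{\text{max}}])$ regularity obtained from \cref{eq:JKO} rests on the discrete energy estimate $\sum_k\|\widehat T^\tau_{k+1}-\widehat T^\tau_k\|^2_{\Lrhoz}\leq2\tau\,(F(\id)-\inf F)$, which needs $F(\id)=H(\rho_0\midd\gamma)<\infty$. So finiteness of the initial entropy must be added as a hypothesis rather than derived. The paper's proof uses it implicitly as well.
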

\begin{proof}
The relative entropy is ($\lambda$-)convex along generalized geodesics in $\PRd$ if and only if $\gamma$ is ($\lambda$-)log-concave \cite[Theorem~9.4.11]{ambrosio2008gradient}.
As mentioned in \cref{rem:functionals}, it satisfies \cref{eq:assumption-diff}, which allows to apply \cref{thm:flow-defini} and get the existence of a solution to \cref{eq:constrained-flow}.
In the $\lambda$-convex case $(ii)$ with $\lambda>0$, \cref{thm:convergence} directly gives the result. In the merely convex case $(i)$ with the additional assumptions of \cref{lem:qg-entropy}, the relative entropy satisfies assumption \cref{eq:condition-QG} with $\alpha=1/6$ and \cref{thm:convergence} then gives the desired convergence results.
\end{proof}

\section{Gradient descent for parameterized OT maps}
\label{sec:guided}

The theoretical results derived in \cref{sec:theory} show that an OT map between an initial measure $\rho_0 \in \PRdac$ and a target measure $\gamma\in\PRd$ can be obtained as the infinite-time limit of the constrained gradient flow \cref{eq:constrained-flow} of some suitable functional $T \mapsto D(T_* \rho_0)$ in $\K$ (e.g., the relative entropy when $\gamma$ is strongly log-concave, see \cref{cor:entropy}). 
Turning this theoretical result into a practical algorithm requires the following two steps.
\begin{enumerate}[$(i)$,leftmargin=*]
    \item \emph{Discretizing the flow in time}, which makes it a gradient \emph{descent}.
    This comes in two flavors:
    either \emph{explicitly}, discretizing the variational characterization \cref{eq:constrained-flow-min}, yielding
    \begin{equation}
        \label{eq:gd-explicit-T}
        \widehat T_{k+1}^\tau
             \in \argmin_{T\in\K} {\int_{\bR^d}}\Big\|-\nablaa D(\widehat T_{k}^\tau{}_*\rho_0)\circ \widehat T_{k}^\tau-\frac{T-\widehat T_{k}^\tau}\tau\Big\|^2\dd\rho_0,
    \end{equation}  
    or \emph{implicitly}, using the proximal scheme \cref{eq:JKO}
    \begin{equation}
        \label{eq:gd-implicit-T}
        \widehat T_{k+1}^\tau\in\argmin_{T\in\K}D(T_*{}\rho_0)+\frac1{2\tau}\|T-\widehat T_{k}^\tau\|^2_{\Lrhoz},
    \end{equation}
    where $\smash{\widehat T_0^\tau}\coloneqq\id\in \K$ and where $\tau>0$ is some time step.
    \item \emph{Replacing the set $\K$ by a parameterization} over a finite-dimensional convex set $\Theta \subset \bR^m$, that is, by a set $\{T_\theta \mid\theta \in \Theta\} \subset \K$.
    The parameterization can be handled as $\theta\mapsto \nabla \phi_\theta$, where $\phi_\theta : \bR^d \to \bR$ is a parameterized convex function, typically an Input Convex Neural Network (ICNN) or Log-Sum-Exp (LSE) network, where $\theta$ denotes the network's parameters.
\end{enumerate}
\cref{sec:unparam} focuses on the convergence properties of the \emph{time discretization} $(i)$ \emph{of the flow} (showing that the implicit discrete scheme converges to the OT map as $k\to\infty$, and that one recovers the (time-continuous) constrained gradient flow when $\tau\to0$).
\cref{sec:procedure} then integrates the \emph{parameterization} $(ii)$ \emph{of $\K$}, yielding implementable schemes. 
Those schemes are then showed in \cref{sec:natural-gd} to belong to the class of \emph{natural gradient} schemes, which sheds light on their good computational behavior, exposed in \cref{sec:numerical}.

\subsection{From gradient flow to gradient descent}
\label{sec:unparam}
In this section, the functional $D:\PRd\to\bR$ will need to satisfy condition \cref{eq:assumption-diff}, which consists in being l.s.c.~with respect to the weak topology on $\PRd$, Wasserstein differentiability, and $\lambda$-convexity along generalized geodesics with anchor point $\rho_0$ for some $\lambda\in\bR$.
Let $F=D\circ\pi$ be the lifted functional of $D$.

The two next propositions below focus on the convergence properties of the implicit scheme \cref{eq:gd-implicit-T}; see \cref{rem:explicit-gd} for a discussion on why one cannot expect the same properties for the explicit scheme without additional assumptions on $D$.
First, we show that the implicit scheme converges to the OT map in the infinite-time limit.
\begin{proposition}[Convergence of the proximal scheme to the OT map]
    \label{prop:conv-tau-fixed}
    Let $D:\PRd\to\bR$ be some functional satisfying \cref{eq:assumption-diff} with $\lambda\in\bR$ and with unique minimizer $\gamma$, let $\tau>0$, and let
    $\smash{(\widehat T_{k}^\tau)_k}$ be a solution of \cref{eq:gd-implicit-T}. Then
    \begin{enumerate}[$(i)$,leftmargin=*]
        \item if $\lambda=0$, then ${\widehat T_{k}^\tau\rightharpoonup\optmap{\rho_\smallzero}{\gamma}}$ weakly in $\LrhozRd$ as $k\to\infty$;
        \item if $\lambda>0$, then ${\widehat T_{k}^\tau\to\optmap{\rho_\smallzero}{\gamma}}$ strongly in $\LrhozRd$ as $k\to\infty$, with convergence rate $O(\alpha^k)$ where $\alpha=(1+\lambda^2\tau^2/4)^{-1/2}<1$.
    \end{enumerate}
\end{proposition}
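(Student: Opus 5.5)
The plan is to treat \cref{eq:gd-implicit-T} as the proximal point algorithm for the constrained functional $F_{\K}=F+\imath_\K$ on the Hilbert space $\LrhozRd$. By \cref{eq:assumption-diff}, \cref{lem:cvx-simple} and \cref{cor:lsc-topo}, together with \cref{prop:convex-closed}, $F_{\K}$ is proper, l.s.c.\ and $\lambda$-convex; this is exactly what the proof of \cref{thm:flow-defini} used. For $\lambda\geq 0$, step $(i)$ of that proof shows each proximal step is well-posed for every $\tau>0$, with a unique minimizer lying in $\K$. By \cref{lem:minimizers}, the unique minimizer $\gamma$ of $D$ gives that $T^\star\coloneqq\optmap{\rho_\smallzero}{\gamma}$ is the unique minimizer of $F_{\K}$. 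Consequently $T^\star$ is a fixed point of the proximal map $P_\tau\colon S\mapsto\argmin_T F_{\K}(T)+\frac1{2\tau}\|T-S\|^2_{\Lrhoz}$.

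\emph{Case $\lambda>0$.}
\begin{enumerate}[$(a)$,leftmargin=*]
    \item I would show that $P_\tau$ is a contraction. The optimality condition of the proximal step reads $\frac{S-P_\tau S}{\tau}\in\partial F_{\K}(P_\tau S)$, where $\partial F_{\K}$ is the (convex-analytic) subdifferential.
    \item Since $F_{\K}$ is $\lambda$-convex, $\partial F_{\K}$ is $\lambda$-strongly monotone: $\langle \xi-\eta,x-y\rangle\geq\lambda\|x-y\|^2$ for $\xi\in\partial F_{\K}(x)$, $\eta\in\partial F_{\K}(y)$. This follows by adding the two first-order inequalities $F_{\K}(y)\geq F_{\K}(x)+\langle\xi,y-x\rangle+\frac\lambda2\|y-x\|^2$ and its symmetric counterpart.
    \item Applying this with $x=P_\tau S$, $y=P_\tau S'$ gives
    \begin{equation}
    (1+\lambda\tau)\|P_\tau S-P_\tau S'\|^2\leq\langle S-S',P_\tau S-P_\tau S'\rangle,
    \end{equation}
    and Cauchy--Schwarz yields $\|P_\tau S-P_\tau S'\|\leq(1+\lambda\tau)^{-1}\|S-S'\|$.
    \item Taking $S'=T^\star$ and iterating gives
    \begin{equation}
    \|\widehat T^\tau_k-T^\star\|_{\Lrhoz}\leq(1+\lambda\tau)^{-k}\|\id-T^\star\|_{\Lrhoz}.
    \end{equation}
    \item Finally, $(1+\lambda\tau)^2\geq1+\lambda^2\tau^2\geq 1+\lambda^2\tau^2/4$, so $(1+\lambda\tau)^{-1}\leq\alpha$, which gives the announced $O(\alpha^k)$ rate.
\end{enumerate}

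\emph{Case $\lambda=0$.} I would follow the classical Rockafellar/Brézis--Lions argument via Opial's lemma, in three steps.
\begin{enumerate}[$(a)$,leftmargin=*]
    \item \emph{Fejér monotonicity.} By $(c)$ above with $\lambda=0$, $P_\tau$ is firmly nonexpansive. Taking $S'=T^\star$ gives
    \begin{equation}
    \|\widehat T_{k+1}-T^\star\|^2+\|\widehat T_{k+1}-\widehat T_k\|^2\leq\|\widehat T_k-T^\star\|^2 .
    \end{equation}
    Hence $\|\widehat T_k-T^\star\|$ is nonincreasing, the iterates are bounded, and $\sum_k\|\widehat T_{k+1}-\widehat T_k\|^2<\infty$, so that $\xi_k\coloneqq(\widehat T_{k}-\widehat T_{k+1})/\tau\to0$ strongly.
    \item \emph{Weak cluster points minimize $F_{\K}$.} Let $\widehat T_{k_j+1}\rightharpoonup \bar T$. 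Since $\xi_k\in\partial F_{\K}(\widehat T_{k+1})$, for every $S$ we have $F_{\K}(S)\geq F_{\K}(\widehat T_{k+1})+\langle\xi_k,S-\widehat T_{k+1}\rangle$. The inner product tends to $0$, because $\xi_k\to0$ strongly and the iterates are bounded. Since $F_{\K}$ is convex and strongly l.s.c., it is weakly l.s.c., so $F_{\K}(S)\geq F_{\K}(\bar T)$. Thus $\bar T$ is a minimizer, i.e.\ $\bar T=T^\star$ by uniqueness.
    \item \emph{Conclusion.} Bounded sequences have weakly convergent subsequences, and every weak cluster point equals $T^\star$. Hence the whole sequence satisfies $\widehat T^\tau_k\rightharpoonup T^\star$; Fejér monotonicity from $(a)$ is not even needed beyond boundedness.
\end{enumerate}

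The main obstacle I expect is the justification of the subdifferential calculus for $F_{\K}$. The optimality condition $0\in\partial F_{\K}(P_\tau S)+(P_\tau S-S)/\tau$ for a sum with the smooth quadratic is standard, but we need $\partial F_{\K}$ to be the subdifferential of a genuinely $\lambda$-convex function on the whole space. This holds since $F_{\K}$ is $\lambda$-convex on all of $\LrhozRd$: it takes the value $+\infty$ outside $\K$, and $\K$ is convex. If one prefers to avoid subdifferentials, a fallback is the purely variational inequality $J_\tau(S)\geq J_\tau(\widehat T_{k+1})+\frac12(\tau^{-1}+\lambda)\|S-\widehat T_{k+1}\|^2$, which follows from the $(\tau^{-1}+\lambda)$-convexity of $J_\tau$. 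Used with $S=T^\star$, it yields the same Fejér-type estimates, with a slightly worse contraction factor $(1+\lambda\tau)^{-1/2}$ in the case $\lambda>0$.
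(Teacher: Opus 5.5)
Your proof is correct and uses the same framework as the paper: \cref{eq:gd-implicit-T} is the proximal point algorithm for the proper, l.s.c., $\lambda$-convex function $F_{\K}=F+\imath_\K$ on $\LrhozRd$, whose unique minimizer is $T^\star$ by \cref{lem:minimizers}. The paper stops there and invokes \cite[Theorems~1 and~2]{rockafellar1976monotone} for the maximal monotone operator $\partial F_{\K}$. It reads the rate $\alpha$ off Theorem~2, which only uses Lipschitz continuity of $(\partial F_{\K})^{-1}$ at $0$. You instead reprove both statements directly, from (strong) monotonicity of $\partial F_{\K}$ and (firm) nonexpansiveness of the resolvent. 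This makes the argument self-contained. It also yields a contraction factor $(1+\lambda\tau)^{-1}$ that holds for every $k$ and is strictly smaller than $\alpha$, since $(1+\lambda\tau)^2>1+\lambda^2\tau^2/4$. What the citation buys the paper is the inexact-step version recorded in \cref{rem:errors}, which your exact-iterate argument does not cover as written.

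Two minor remarks. First, the subdifferential issue you flag as the main obstacle disappears if you derive the needed inequality only for the specific element $(S-P_\tau S)/\tau$. Compare $J_\tau(P_\tau S)$ with $J_\tau\big(P_\tau S+t(y-P_\tau S)\big)$ for $y\in\K$, use $\lambda$-convexity, divide by $t$ and let $t\to0$; no sum rule is needed. Second, your variational fallback gives the factor $(1+\lambda\tau)^{-1/2}$, which is $\le\alpha$ only when $\lambda\tau\le4$. So the fallback alone would not recover the stated rate in general, but your main argument does not need it.
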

\begin{proof}
$F$ is $\lambda$-convex (\cref{lem:cvx-simple}) and l.s.c.~(\cref{cor:lsc-topo}) on $\K$. 
Since $\K$ is convex and closed in $\LrhozRd$ (\cref{prop:convex-closed}), the indicator function $\imath_{\K}$ is convex and l.s.c.~\cite[Examples 1.25 and 8.3]{bauschke2017convex}, and $F+\imath_{\K}$ is therefore $\lambda$-convex and l.s.c.
In case $(i)$ where $\lambda=0$, one can therefore apply \cite[Theorem~1]{rockafellar1976monotone} to obtain the weak convergence to the unique minimizer $\optmap{\rho_\smallzero}{\gamma}$ of $F$ in $\K$;
and in case $(ii)$ where $\lambda>0$, one might apply \cite[Theorem~2]{rockafellar1976monotone} to obtain the strong convergence. The desired convergence rate can be obtained by combining \cite[Theorem~2, Proposition~7, and Remark~4]{rockafellar1976monotone} with the $\lambda$-convex function $F+\imath_{\K}$.
\end{proof}
\begin{remark}[Inexact solving of \cref{eq:gd-implicit-T}]
    \label{rem:errors}
    It is worth mentioning that \cite[Theorems 1 and 2]{rockafellar1976monotone} allows \cref{prop:conv-tau-fixed} to hold even when the solving of \cref{eq:gd-implicit-T} is not exact, as long as the successive errors are small enough. Namely, writing $J_k^\tau$ the functional to be minimized in \cref{eq:gd-implicit-T}, \cref{prop:conv-tau-fixed} still holds if there exists a sequence $(\delta_k)_k$ such that $\sum_{k=0}^{\infty}\delta_k<\infty$ and
    \begin{equation}
        \label{eq:condition1}
        d_{\Lrhoz}\big(\widehat T_{k+1}^\tau,\argmin_{\K}J_k^\tau\big)\leq \delta_k\|\widehat T_{k+1}^\tau-\widehat T_{k}^\tau\|_{\Lrhoz}\qquad \text{for all }k\geq0.
    \end{equation}
    In that case, the convergence rate in $(ii)$ becomes $O(\Pi_{\ell=0}^k\frac{\alpha+\delta_\ell}{1-\delta_\ell})$.
    Property \cref{eq:condition1} is hard to check numerically---luckily, it is implied \cite[Section 1]{rockafellar1976monotone} by the weaker condition
    \begin{equation}
        \label{eq:condition2}
        \big\|\partial F(\widehat T_{k+1}^\tau)+\frac1{2\tau}(\widehat T_{k+1}^\tau-\widehat T_{k}^\tau)\big\|_{\Lrhoz}\leq \frac{\delta_k}{\tau}\|\widehat T_{k+1}^\tau-\widehat T_{k}^\tau\|_{\Lrhoz}\qquad \text{for all }k\geq0,
    \end{equation}
    a quantity which is easier to compute.
\end{remark}
From the proof of \cref{thm:flow-defini}, we also get that in the limit $\tau\to0$, the implicit scheme \cref{eq:gd-implicit-T} converges to the time-continuous constrained gradient flow, in the following sense. 
\begin{proposition}[Convergence of the proximal scheme to the constrained gradient flow when $\tau\to0$]
    \label{prop:conv-to-continuous}
Let $D:\PRd\to\bR$ be some functional satisfying \cref{eq:assumption-diff} with $\lambda\in\bR$.
    Let $\smash{\widehat T_{k}^\tau}$ be a family of solutions of \cref{eq:gd-implicit-T} indexed by $\tau>0$ and $\smash{\widehat T^\tau}$ their associated piecewise constant interpolations, defined as $\smash{\widehat T_\tau(t)}\coloneqq \smash{\widehat T_{k}^\tau}$ for $t\in ((k-1)\tau,k\tau]$.
    Then for every $t_{\text{max}}>0$, there exists
    a sequence $\tau_k\to0$ and a solution $(T_t)_t\in H^1([0,t_{\text{max}}],\K)$ to \cref{eq:constrained-flow} such that
    \begin{equation}
    \smash{\widehat T_{\tau_k}(t)\xrightarrow[k\to\infty]{} T_t} \qquad \text{for a.e.~} t\in[0,t_{\text{max}}].
    \end{equation}
\end{proposition}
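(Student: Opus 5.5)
This statement essentially re-reads part $(ii)$ of the proof of \cref{thm:flow-defini}. The plan is to view \cref{eq:gd-implicit-T} as the minimizing movement scheme \cref{eq:JKO} for the constrained functional $F_\K = F + \imath_\K$ on the Hilbert space $\LrhozRd$, then invoke the compactness-and-identification result of \cite[Theorem~2]{rossi2006gradient}. The only bookkeeping is to check that the two schemes coincide. Since $F_\K(T)=+\infty$ outside $\K$, minimizing $F_\K(T)+\frac1{2\tau}\|T-\widehat T_k^\tau\|^2_{\Lrhoz}$ over $\LrhozRd$ is the same as minimizing $D(T_*\rho_0)+\frac1{2\tau}\|T-\widehat T_k^\tau\|^2_{\Lrhoz}$ over $\K$. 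Hence any family of solutions of \cref{eq:gd-implicit-T} is a family of discrete solutions of \cref{eq:JKO}, with the same initial datum $\widehat T_0^\tau=\id\in\K$.

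Next I verify the hypotheses of \cite{rossi2006gradient}, exactly as in step $(i)$ of the proof of \cref{thm:flow-defini}.
\begin{itemize}
\item $F$ is l.s.c.\ on $\LrhozRd$ by \cref{cor:lsc-topo}, and $\imath_\K$ is l.s.c.\ since $\K$ is closed (\cref{prop:convex-closed}).
\item $F$ is $\lambda$-convex on $\K$ by \cref{lem:cvx-simple}, and $\imath_\K$ is convex, so $F_\K$ is proper, l.s.c.\ and $\lambda$-convex.
\item For $\tau<1/\lambda^-$, the step functional is $(\tau^{-1}+\lambda)$-convex, so each step is uniquely solvable. This is why the statement implicitly takes $\tau$ small, and it ensures the family $\widehat T^\tau$ is well defined.
\end{itemize}

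With these hypotheses, \cite[Theorem~2]{rossi2006gradient} provides a sequence $\tau_k\to0$ and a limit curve $T\in H^1([0,t_{\max}],\LrhozRd)$ with $T_0=\id$. The piecewise constant interpolants converge to it, $\widehat T_{\tau_k}(t)\to T_t$ for a.e.\ (indeed every) $t\in[0,t_{\max}]$, and the limit solves $\partial_tT_t=-\partial^\circ F_\K(T_t)$ for a.e.\ $t$, i.e.\ the Cauchy problem \cref{eq:cauchy}. Two further points finish the argument. Since each $\widehat T_k^\tau\in\K$ and $\K$ is closed, $T_t\in\K$ for all $t$. Since $D$ is Wasserstein differentiable, \cref{lem:2flows} gives $-\partial^\circ F_\K(T_t)=\proj_{\Tan_{T_t}\K}(-\nablaa D(T_t{}_*\rho_0)\circ T_t)$, so $T$ solves \cref{eq:constrained-flow}.

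The main obstacle is making sure the convergence mode delivered by \cite{rossi2006gradient} is the pointwise-in-time convergence claimed. In the $\lambda$-convex Hilbert setting, one can fall back on the Crandall--Liggett / \cite[Theorem~4.0.4]{ambrosio2008gradient} error estimates, which give uniform convergence of the whole family as $\tau\to0$, not just along a subsequence. The same estimates yield uniqueness of the limit curve. If only weak convergence in $\LrhozRd$ came out of the compactness argument, I would upgrade it to strong convergence using these estimates.
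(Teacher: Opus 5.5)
Your proposal is correct and follows the paper's route. The paper obtains this statement directly from step $(ii)$ of the proof of \cref{thm:flow-defini}: it recognizes \cref{eq:gd-implicit-T} as the minimizing movement scheme \cref{eq:JKO} for the proper, l.s.c., $\lambda$-convex functional $F_\K$, invokes \cite[Theorem~2]{rossi2006gradient}, and identifies the limit through \cref{lem:2flows}. Your closing remark that, in this $\lambda$-convex Hilbert setting, Crandall--Liggett or \cite[Theorem~4.0.4]{ambrosio2008gradient} give uniform convergence of the whole family (no subsequence needed) is a valid strengthening, though the statement as posed does not require it.
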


Of importance for our numerical study, note that \cref{prop:conv-tau-fixed,prop:conv-to-continuous} above hold for the relative entropy with respect to some $\lambda$-log-concave measure $\gamma$, where $\lambda>0$.

\begin{remark}[Convergence results for the explicit scheme]
    \label{rem:explicit-gd}
    It is worth mentioning that one cannot hope for the convergence results of \cref{prop:conv-tau-fixed,prop:conv-to-continuous} for the \emph{explicit} scheme \cref{eq:gd-explicit-T} without assuming some smoothness on the functional $D$---typically, some Lipschitz continuity on its gradient. Unfortunately, this smoothness assumption does not hold for the relative entropy, our functional of choice in this work, and we do not know of any other functional that would satisfy the assumptions of \cref{thm:flow-defini,thm:convergence} while also being smooth. 
    In the next sections, we implement the explicit scheme anyway, hoping that the parameterization $\theta\mapsto T_\theta$ by neural networks induces some smoothness on the optimized functional thanks to an architectural regularization.
\end{remark}

\subsection{Gradient descent for parameterized OT maps}
\label{sec:procedure}
In this section, we write down the time-discrete schemes \cref{eq:gd-explicit-T,eq:gd-implicit-T} in the context of a parameterization $\theta\mapsto T_\theta\in\K$, switching from an optimization on the set $\K$ of OT maps to some parameter space $\Theta\subset\bR^m$. This parameterization takes the form $\theta\mapsto\nabla\phi_\theta$, where $\phi_\theta : \bR^d \to \bR$ is a parameterized convex function, typically an Input Convex Neural Network (ICNN) or Log-Sum-Exp (LSE) network, and where $\theta\in\Theta$ denotes the network's parameters. Good expressivity properties of such architectures (see \cite{chen2018optimal,gagneux2025convexity} for ICNNs and \cite{calafiore2019log,calafiore2020universal} for LSE networks) suggest that they may provide a good approximation of $\K$ when their size is sufficiently big.

With such a parameterization, the scheme \cref{eq:gd-explicit-T} in $\K$ becomes
the \emph{explicit scheme} in $\Theta$
\begin{equation}
    \tag{\textsc{gd}, expl.}
    \label{eq:explicit_scheme}
    \theta_{k+1}\in\argmin_{\theta\in\Theta} \int_{\bR^d}\Big\|-\nablaa D(T_{\theta_k}{}_*\rho_0)\circ T_{\theta_k}-\frac{T_{\theta}-T_{\theta_k}}\tau\Big\|^2 \dd\rho_0,
\end{equation}
and the scheme \cref{eq:gd-implicit-T} in $\K$ becomes the \emph{implicit scheme} in $\Theta$
\begin{equation}
    \tag{\textsc{gd}, impl.}
    \label{eq:implicit_scheme}
    \theta_{k+1} \in \argmin_{\theta \in \Theta} D(T_\theta{}_*\rho_0) + \frac{1}{2\tau} \| T_\theta - T_{\theta_k}\|^2_{\Lrhoz},
\end{equation}
where both schemes start from some initial parameter $\theta_0 \in \Theta$ and where $\tau > 0$ is some fixed step size. 
The performance of these two schemes will have to be compared with that of the standard Euclidean gradient descent in the parameter space $\Theta$, that is,
\begin{equation}
    \label{eq:eucl-gd}
    \tag{\textsc{e}ucl.\textsc{gd}}
    \theta_{k+1} = \theta_k - \tau \nabla_\theta D(T_{\theta_k}{}_*\rho_0),
\end{equation}
which is the (explicit) time-discretization of the Euclidean gradient flow
\begin{equation}
    \label{eq:eucl-gf}
    \tag{\textsc{e}ucl.\textsc{gf}}
    \partial_t\theta_t=-\nabla_\theta D(T_{\theta_t}{}_*\rho_0).
\end{equation}
While the Euclidean gradient flow attempts to minimize $\theta \mapsto D({T_\theta}_* \rho_0)$ by following the steepest descent direction \emph{in the parameter space}, the flow \cref{eq:constrained-flow} we consider uses the information of descent \emph{in $\LrhozRd$ directly}. This remark is at the heart of the next section, which provides intuition on why such a behavior is well-suited for convergence.

\subsection{Link with natural gradient flows}
\label{sec:natural-gd}
Before focusing on the implementation details, we provide in this subsection a geometric interpretation of the dynamic on $\Theta$ induced by our schemes. 
We show that it can be seen as a gradient flow in $\Theta$ endowed not with the standard Euclidean metric but with the \emph{pullback} metric of the flat $\Lrhoz\!$-metric by the mapping $\theta\mapsto T_\theta$. 
This procedure is known under the name of \emph{natural gradient flow} (or \emph{natural gradient descent} for its discrete counterpart in the machine learning literature \cite{bai2022generalized,zhang2019fast}) and takes origins in the seminal work of \cite{amari1998natural} that pulled back the Fisher--Rao metric from $\PRd$ to $\Theta$. 
This observation sheds light on the good computational behavior of our constrained gradient flow (which we detail in \cref{sec:numerical} below):
whereas the performance of \cref{eq:eucl-gf} strongly depends on the parameterization $\theta\mapsto T_\theta$ \cite{martens2010deep,sutskever2013importance}, the natural gradient flows have good invariance properties with respect to re-parameterizations \cite{arbel2019kernelized,van2023invariance}.

Let us recall that $\theta\mapsto T_\theta$ is a parameterization of the set $\K$ of OT maps (encoded as gradients of convex functions). Observe that in the continuous time limit ($\tau\to0$), the descent step \cref{eq:explicit_scheme} formally yields the following evolution equation:
\begin{equation}
    \label{eq:guided-flow}
    \tag{\textsc{n}at.\textsc{gf}}
    \partial_t\theta_t\in\argmin_{\delta\theta\in \Tan_{\theta_t}\!\!\Theta} \int_{\bR^d}\big\|-\nablaa D(T_{\theta_t}{}_*\rho_0)\circ T_{\theta_t}-\nabla_\theta T_{\theta_t}.\delta\theta\big\|^2 \dd\rho_0,
\end{equation}
starting from some initial parameter $\theta_0\in\Theta$.
Under the additional assumption that the matrix $\smash{\int_{\bR^d}(\nabla_\theta T_{\theta_t})^\top\nabla_\theta T_{\theta_t}\dd\rho_0}$ is invertible, the optimality equation associated to \cref{eq:guided-flow} reads:
\begin{equation}
    \label{eq:guided-explicit}
    \partial_t\theta_t
    =-\Big[\int_{\bR^d}(\nabla_\theta T_{\theta_t})^\top\nabla_\theta T_{\theta_t}\dd\rho_0\Big]^{-1}\int_{\bR^d}(\nabla_\theta T_{\theta_t} )^\top \nablaa D(T_{\theta_t}{}_*\rho_0)\circ T_{\theta_t}\dd\rho_0.
\end{equation}    
Using the chain rule in \cref{eq:eucl-gf} makes explicit that the only (but crucial) difference between the standard gradient flow \cref{eq:eucl-gf} and the flow \cref{eq:guided-flow} is a preconditioning matrix, which is the inverse of $\smash{\int_{\bR^d}(\nabla_\theta T_{\theta_t})^\top\nabla_\theta T_{\theta_t}\dd\rho_0}$ (which is sometimes called \emph{neural tangent kernel} \cite{jacot2018neural,bai2022generalized}). This hints at a connection between \cref{eq:guided-flow} and the \emph{natural gradient} schemes, which we detail below.
As a computational side note, the numerical complexity of inverting this $m\times m$ matrix prevents the direct use of \cref{eq:guided-explicit} as an explicit scheme, and the method of choice consists of solving the optimization problem \cref{eq:guided-flow} (see \cref{sec:numerical} for the implementation details).

Let us now give the general definition of natural gradient flows, instantiate it to our setting---that is, in the space of transport maps $\LrhozRd$ endowed with its (flat) Hilbert metric---, and show that \cref{eq:guided-flow} fits this framework.

\begin{definition}[Natural gradient flow]
    \label{def:natural-gd-general}
    Let $\Theta$ be a finite-dimensional manifold and $M$ be a (possibly infinite-dimensional) Riemannian manifold with metric $g$.
    Let $\sigma$, $F$ and $L$ be defined as in the following sequence of mappings:
    \begin{equation}
    \label{diagram:natural}
    \begin{tikzcd}
    L:\Theta \ar[r, "\sigma"] & M \ar[r, "F"] & \bR,
    \end{tikzcd}
    \end{equation}
    that is, $L:\theta\mapsto F(\sigma_\theta)$. Assume that $\sigma$ and $F$ are differentiable, and that $d_\theta \sigma$ is injective for all $\theta\in\Theta$\footnote{The assumption of injectivity of the differential of $\sigma$ is required for the metric $\sigma^*g$ to be nondegenerate on $\Theta$. See \cite{van2023invariance,bai2022generalized} for generalizations of the natural gradient to cases where the differential of $\theta\mapsto \rho_\theta$ is allowed to be singular.}.
    Then the \emph{natural gradient flow} of $F$ on $\Theta$ is defined as the gradient flow of $\theta\mapsto F(\sigma_\theta)$ on $\Theta$ with respect to the pullback metric of $g$ by $\sigma$, which we note $\sigma^*g$ and which is defined as
    \begin{equation}
    (\sigma^*g)_\theta(\delta\theta,\delta\theta)\coloneqq g_{\sigma_\theta}(d_\theta \sigma[\delta\theta],d_\theta \sigma[\delta\theta])
    \qquad
    \text{for any $\theta\in\Theta$ and $\delta\theta\in T_\theta\Theta$.}
    \end{equation}
\end{definition}
\noindent Let us now instantiate this definition in the case where $M$ is the space $\LrhozRd$ of transport maps, for some convex parameter space $\Theta\subset\bR^m$.
\begin{definition}[$\Lrhoz\!$-natural gradient flow]
    Let $\Theta\subset\bR^m$. Let $\theta\mapsto T_\theta\in\LrhozRd$ be differentiable and such that $d_\theta T_\theta$ is injective for all $\theta\in\Theta$, and let $F:\LrhozRd\to\bR$ be differentiable.
    Then the \emph{$\Lrhoz\!$-natural gradient flow} of $F$ on $\Theta$ is the gradient flow of $\theta\mapsto F(T_\theta)$ on $\Theta$ with respect to the pullback metric of the flat $\Lrhoz\!$-metric by the map $\theta\mapsto T_\theta$, that is,
    \begin{equation}
    h_\theta(\delta\theta,\delta\theta)=\int_{\bR^d}\|d_\theta T_\theta[\delta\theta]\|^2\dd\rho_0
    \qquad
    \text{for any $\theta\in\Theta$ and $\delta\theta\in T_\theta\Theta$.}
    \end{equation}
\end{definition}

We now show that the parameterized constrained gradient flow \cref{eq:guided-flow} can be seen as a $\Lrhoz\!$-natural gradient flow on $\Theta$.
This is proved in \cref{cor:natural-gf}, which is a consequence of the following general lemma whose proof can be found in \cref{sec:natural-gd-proof}.

\def\naturalgdtitle{Natural gradient via quadratic minimization}
\def\naturalgdall{
    Let $\Theta$ be a finite-dimensional manifold and $M$ be a (possibly infinite-dimensional) Riemannian manifold with metric $g$.
    Let $\sigma:\Theta\to M$, $F:M\to\bR$ and $L=F\circ\sigma$,
    as in \cref{diagram:natural}.
    Assume that $\sigma$ and $F$ are differentiable, and that $d_\theta \sigma$ is injective for all $\theta\in\Theta$. Then
}

\begin{lemma}[\naturalgdtitle]
\label{prop:natural-gd}
\naturalgdall
    \begin{equation}
    \label{eq:minimization-general}
        \argmin_{\delta\theta\in T_\theta\Theta}\,\big\|\!\grad^g_MF(\sigma_\theta)-d_\theta \sigma[\delta\theta]\big\|_{g}^2
    \end{equation}
    is unique and equal to $\grad^{\sigma^*g}_\Theta L(\theta)$, that is, the gradient of $L$ with respect to the pullback metric $\sigma^*g$.
\end{lemma}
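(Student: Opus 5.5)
The plan is to treat \cref{eq:minimization-general} as a finite-dimensional least-squares problem in $\delta\theta$ and identify its normal equation with the defining relation of $\grad^{\sigma^*g}_\Theta L(\theta)$. Throughout, fix $\theta\in\Theta$ and write $A\coloneqq d_\theta\sigma:T_\theta\Theta\to T_{\sigma_\theta}M$, a linear map from a finite-dimensional space, injective by assumption, and $u\coloneqq\grad^g_MF(\sigma_\theta)\in T_{\sigma_\theta}M$. The objective is
\begin{equation}
    q(\delta\theta)=\|u-A\delta\theta\|_g^2=\|u\|_g^2-2\langle u,A\delta\theta\rangle_g+(\sigma^*g)_\theta(\delta\theta,\delta\theta),
\end{equation}
using that $\|A\delta\theta\|_g^2=(\sigma^*g)_\theta(\delta\theta,\delta\theta)$ by definition of the pullback metric.

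\emph{Existence and uniqueness.} Since $A$ is injective and $T_\theta\Theta$ is finite-dimensional, $(\sigma^*g)_\theta$ is a positive-definite inner product on $T_\theta\Theta$. The map $q$ is therefore a strictly convex quadratic, coercive on a finite-dimensional space, so it has exactly one minimizer. Equivalently, the range $A(T_\theta\Theta)$ is a finite-dimensional, hence closed, subspace of the Hilbert space $T_{\sigma_\theta}M$. The minimizer is then $A^{-1}$ applied to the orthogonal projection of $u$ onto this range, and it is unique by injectivity.

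\emph{Identification.} The first-order condition at the minimizer $\delta\theta^\star$ reads
\begin{equation}
    (\sigma^*g)_\theta(\delta\theta^\star,\eta)=\langle u,A\eta\rangle_g\qquad\text{for all }\eta\in T_\theta\Theta.
\end{equation}
By the definition of the Riemannian gradient and the chain rule, $\langle u,A\eta\rangle_g=d_{\sigma_\theta}F[d_\theta\sigma[\eta]]=d_\theta L[\eta]$. Hence $(\sigma^*g)_\theta(\delta\theta^\star,\cdot)=d_\theta L$, which is exactly the characterization of $\grad^{\sigma^*g}_\Theta L(\theta)$. Uniqueness of the Riesz representative for the nondegenerate metric $\sigma^*g$ then concludes.

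The only delicate point is the setting where $M$ is infinite-dimensional. There one must check that the chain rule $d_\theta L=d_{\sigma_\theta}F\circ d_\theta\sigma$ holds under the stated differentiability assumptions (Fréchet differentiability of $F$ and $\sigma$ suffices). One also needs the range of $A$ to be closed, which is automatic in finite dimension. No other obstacle is expected.
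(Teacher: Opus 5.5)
Your proof is correct and follows essentially the same route as the paper: expand the square, rewrite the quadratic term via the pullback metric and the cross term via the definition of the Riemannian gradient and the chain rule, then read the first-order condition as the Riesz characterization of $\grad^{\sigma^*g}_\Theta L(\theta)$. You also give an explicit existence and uniqueness argument, namely strict convexity and coercivity of the quadratic on the finite-dimensional $T_\theta\Theta$ thanks to the injectivity of $d_\theta\sigma$, which fills in a step the paper's proof leaves implicit.
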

\begin{corollary}[The parameterized constrained gradient flow is a natural gradient flow]
    \label{cor:natural-gf}
    Let $\Theta\subset\bR^m$. Let $\theta\mapsto T_\theta\in\K$ be differentiable and such that $d_\theta T_\theta$ is injective for all $\theta\in\Theta$, and let $D:\PRd\to\bR$ be differentiable.
    Then then flow \cref{eq:guided-flow} is the $\Lrhoz\!$-natural gradient flow of $F:T\mapsto D(T_*\rho_0)$ on $\Theta$.
\end{corollary}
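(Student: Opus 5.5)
The plan is to apply \cref{prop:natural-gd} with $M\coloneqq\LrhozRd$ endowed with its flat Hilbert metric $g=\langle\cdot,\cdot\rangle_{\Lrhoz}$, $\sigma\coloneqq(\theta\mapsto T_\theta)$, and $F\coloneqq D\circ\pi$ the lifted functional of \cref{def:lifted}, so that $L(\theta)=D(T_\theta{}_*\rho_0)$. The pullback metric $\sigma^*g$ is then exactly the metric $h$ defining the $\Lrhoz\!$-natural gradient flow, and the hypotheses on $\sigma$ (differentiability, injectivity of $d_\theta T_\theta$) are those of the corollary. Since $\Theta\subset\bR^m$ is a finite-dimensional manifold (we take $\Tan_{\theta}\Theta=T_\theta\Theta=\bR^m$ at interior points), the setting of \cref{prop:natural-gd} is met.

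Next I identify the Riemannian gradient of $F$ on $M$. Since $D$ is differentiable, \cref{lem:frechet-diff} gives that $F$ is Fréchet differentiable on $\LrhozRd$ with $\nabla F(T)=\nablaa D(T_*\rho_0)\circ T$; for the flat metric the Riemannian gradient coincides with the Fréchet gradient, so $\grad^g_MF(T_\theta)=\nablaa D(T_\theta{}_*\rho_0)\circ T_\theta$. Also $d_\theta\sigma[\delta\theta]=\nabla_\theta T_\theta.\delta\theta$, and $\|\cdot\|_g^2=\int_{\bR^d}\|\cdot\|^2\dd\rho_0$.

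Plugging in, the minimization in \cref{eq:guided-flow} is $\argmin_{\delta\theta}\|-\grad^g_MF(T_\theta)-d_\theta\sigma[\delta\theta]\|_g^2$. The sign differs from \cref{eq:minimization-general}: substituting $\delta\theta\mapsto-\delta\theta$ (the admissible set $\bR^m$ is symmetric) shows the minimizer is $-\grad^{\sigma^*g}_\Theta L(\theta_t)$, unique by \cref{prop:natural-gd}. Hence \cref{eq:guided-flow} reads $\partial_t\theta_t=-\grad^{h}_\Theta L(\theta_t)$, which is by definition the $\Lrhoz\!$-natural gradient flow of $F$ on $\Theta$.

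The only delicate point is the bookkeeping: matching the sign convention, checking the Fréchet gradient equals the metric gradient for the flat $\Lrhoz$ structure, and treating $\Tan_{\theta_t}\Theta$ as the full tangent space (at boundary points of a convex $\Theta$ one would need a constrained version of \cref{prop:natural-gd}, which I would set aside by working in the interior).
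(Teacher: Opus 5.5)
Your proposal is correct and follows the paper's proof: apply \cref{prop:natural-gd} with $M=\LrhozRd$ carrying its flat metric and $\sigma:\theta\mapsto T_\theta$, and identify $\grad^g_M F(T_\theta)=\nablaa D(T_\theta{}_*\rho_0)\circ T_\theta$ via \cref{lem:frechet-diff}. Two points the paper leaves implicit are handled explicitly in your version: the sign flip $\delta\theta\mapsto-\delta\theta$, which yields $\partial_t\theta_t=-\grad^{h}_\Theta L(\theta_t)$, and the restriction to interior points of $\Theta$, where $\Tan_{\theta}\Theta=\bR^m$.
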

\begin{proof}
Taking $M=\LrhozRd$ and $\sigma:\theta\mapsto T_\theta$ in \cref{eq:minimization-general} and recalling that the gradient of $F$ at some $T\in\LrhozRd$ is given by $\nabla F(T)=\nablaa D(T_*\rho_0)\circ T$ whenever $D$ is differentiable (see \cref{lem:frechet-diff}),
one recovers the minimization problem in \cref{eq:guided-flow}.
The flow \cref{eq:guided-flow} is therefore the $\Lrhoz\!$-natural gradient flow of $\theta\mapsto F(T_\theta)=D(T_\theta{}_*\rho_0)$ with respect to the mapping $\theta\mapsto T_\theta$.
\end{proof}

As such, whereas the standard Euclidean gradient flow \cref{eq:eucl-gf} imposes a flat metric on the parameter space $\Theta$ and yields an evolution in a curved $\LrhozRd$ (endowed with the so-called \emph{neural tangent kernel} geometry \cite{jacot2018neural,bai2022generalized}), the $\Lrhoz\!$-natural gradient flow imposes a simpler geometry on $\LrhozRd$---that is, its flat Hilbert structure. 
Because many functionals are well-behaved in $\PRd$ with the Wasserstein metric (such as the relative entropy, $\lambda$-convex whenever the reference measure is $\lambda$-log-concave) and since this space is strongly linked to $\LrhozRd$ (the pushforward mapping $T\mapsto T_*\rho_0$ can be seen as an informal Riemannian submersion between those spaces \cite{otto2001geometry,modin2016geometry}), we believe that this pullback geometry on $\Theta$ is better suited for guaranteeing convergence of flows taking place on $\LrhozRd$, which our proof-of-concept experiments in \cref{sec:numerical} seem to confirm. See \cref{fig:natural-gd} below for a visual illustration.

\begin{figure}[!h]
\centering
\begin{subfigure}{0.45\textwidth}
    \includegraphics[width=\textwidth]{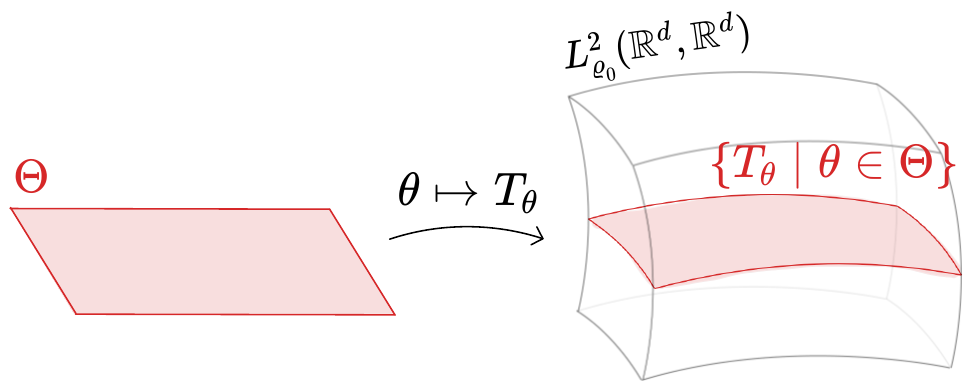}
    \label{fig:1a}
    \vspace{-5mm}
    \caption{Euclidean gradient structure in $\Theta$.}
\end{subfigure}
\hfill
\begin{subfigure}{0.45\textwidth}
    \includegraphics[width=\textwidth]{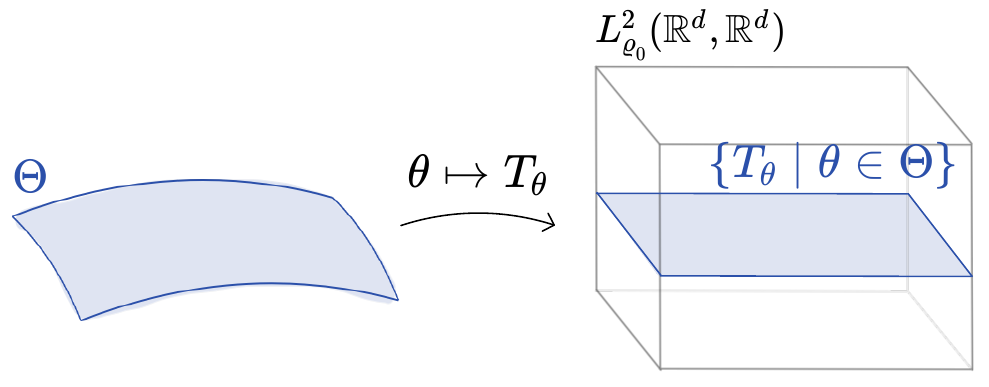}
    \vspace{-5mm}
    \caption{Natural gradient structure in $\Theta$.}
    \label{fig:1b}
\end{subfigure}
\caption{Simplified view of the geometries underlying the Euclidean \cref{eq:eucl-gf} and natural \cref{eq:guided-flow} gradient structures in $\Theta$. {\small (\textsc{a})} For the standard gradient flow, the parameter space $\Theta$ is endowed with the Euclidean metric and the optimization takes place in a curved $\LrhozRd$. {\small (\textsc{b})} For the $\Lrhoz\!$-natural gradient flow, the optimization takes place in a flat $\LrhozRd$ and $\Theta$ is endowed with the (non-flat) pullback metric.}
\label{fig:natural-gd}
\end{figure}

\begin{remark}[Wasserstein natural gradient flows]
    Another possible instantiation of \cref{def:natural-gd-general} is with a parameterization $\sigma:\theta\mapsto\rho_\theta$ of the set of probability measure $\PRd$. Whenever $\PRd$ is endowed with the Wasserstein(--Otto) metric, this procedure is called \emph{Wasserstein natural gradient flow} \cite{li2018natural,arbel2019kernelized}.
    It is worth mentioning that
    the parameterized constrained gradient flow \cref{eq:guided-flow} is \emph{not} a {Wasserstein} natural gradient flow with respect to $\theta\mapsto T_\theta{}_*\rho_0$. Indeed, in that case,
    the chain rule and the definition of the Wasserstein(--Otto) metric (see \cref{app:wasserstein-metric}) yield that \cref{eq:minimization-general} becomes
    \begin{equation}
        \label{eq:min-wass-ngf}
        \argmin_{\delta\theta\in T_\theta\Theta}\int_{\bR^d} \big\|-\nablaa D(T_{\theta}{}_*\rho_0)\circ T_{\theta}-\Pi^\nabla_{\rho_\theta}(\nabla_\theta T_{\theta}.\delta\theta\circ T^{-1}_\theta)\circ T_{\theta}\big\|\dd\rho_0,
    \end{equation}
    where $\Pi^\nabla_{\rho_\theta}$ is the operator that returns the gradient part in the \nameref*{thm:helmholtz} with respect to $\rho_\theta\coloneqq T_\theta{}_*\rho_0$ (see \cref{app:helmholtz} for reminders on this decomposition).
    As an alternative to \cref{eq:guided-flow}, this flow would be interesting to study; yet, from a computational perspective, it seems less convenient to implement, hence our choice to stick with \cref{eq:guided-flow} in this work.
\end{remark}

\begin{remark}[Unconstrained parameterization and drifting models]
    \label{rem:unconstrained}
    It is worth mentioning that the whole content of this section does not depend on the image of the parameterization $\theta\mapsto T_\theta$; in particular, everything holds if $\theta\mapsto T_\theta$ is a parameterization of (a subset of) the whole space $\LrhozRd$ of transport maps (not necessarily optimal). This is akin to the setting of drifting generative models (see \cref{sec:relatedworks}), hinting at their link with natural gradient descent schemes.
\end{remark}

\subsection{Implementation and numerical illustration}
\label{sec:numerical}

In this section, we present a practical implementation of the explicit and implicit schemes \cref{eq:explicit_scheme,eq:implicit_scheme} introduced in \cref{sec:procedure}, in the case where the functional $D$ of interest is the relative entropy\footnote{Note that both schemes can be used with any functional $D$, as long as one can numerically compute (an approximation of) its Wasserstein gradient, as we do in \cref{sec:practical} for the relative entropy.} $D = H(\cdot \midd \gamma)$ with respect to some strongly log-concave measure $\gamma$, which we write $\gamma \propto e^{-V}$ for some known strongly convex potential $V : \bR^d \to \bR$.

\noindent We recall that OT maps $T_\theta$ are parameterized as gradients of neural networks that are convex with respect to their input $x \in \bR^d$ (e.g., ICNNs),
where $\theta \in \Theta \subset \bR^m$ represent the network's weight parameters. 
\cref{alg:trad-train} below implements the standard Euclidean gradient descent \cref{eq:eucl-gd} on $\Theta$, while \cref{alg:new-train} implements the schemes \cref{eq:explicit_scheme,eq:implicit_scheme}, which discretize the constrained gradient flow---and which, under the light of \cref{sec:natural-gd}, can be interpreted as gradient descents on $\Theta$ for a different geometry.
\begin{figure}[!h]
\centering
\begin{minipage}[t]{.48\linewidth}
    \begin{algorithm}[H]
    \flushleft
    \caption{Euclidean gradient descent}
    \vspace{1mm}
    \textbf{Inputs:} source measure $\rho_0 \in \PRd$, initial parameter $\theta_0 \in \bR^m$, step size $\tau > 0$ \\
    \begin{algorithmic}[0]
        \For{$k\in[\![0,K-1]\!]$}
            \State let $\delta\theta \coloneqq -\nabla_\theta H(T_{\theta_k}{}_*\rho_0\midd \gamma)$
            \State update $\theta_{k+1} \gets \theta_k + \tau \delta\theta$
        \EndFor
        \end{algorithmic}
    \textbf{Output:} final map $T_{\theta_K}$
    \label{alg:trad-train}
  \end{algorithm}
\end{minipage}
\hfill
\begin{minipage}[t]{.48\linewidth}
    \begin{algorithm}[H]
    \flushleft
    \caption{Constrained gradient flow}
    \vspace{1mm}
    \textbf{Inputs:} source measure $\rho_0 \in \PRd$, initial parameter $\theta_0 \in \bR^m$, step size $\tau > 0$ \\
    \begin{algorithmic}[0]
        \For{$k\in[\![0,K-1]\!]$}
            \If{explicit scheme}
            \State find the minimizer $\theta^\star$ of \cref{eq:explicit_scheme}
            \ElsIf{implicit scheme}
            \State find the minimizer $\theta^\star$ of \cref{eq:implicit_scheme}
            \EndIf
            \State $\theta_{k+1} \gets \theta^\star$
        \EndFor
        \end{algorithmic}
    \textbf{Output:} final map $T_{\theta_K}$
    \label{alg:new-train}
  \end{algorithm}
\end{minipage}
\end{figure}

\subsubsection{Practical implementation details.} 
\label{sec:practical}
In most practical cases (including our numerical illustrations), the source measure $\rho_0$ can be sampled from, or is accessible through an empirical counterpart. 
One can thus approximate all integrals with respect to $\rho_0$ using their empirical counterpart based on i.i.d.~samples $(x_1,\dots,x_n)$ from $\rho_0$. We make those approximations explicit below.
\begin{itemize}[leftmargin=7mm]
    \item \emph{\cref{alg:trad-train}.}
    Using the chain rule, the gradient of the loss function that needs to be computed can be expressed as 
    \begin{equation} \label{eq:grad_rel_entropy_param}
        \nabla_\theta H (T_{\theta_k}{}_*\rho_0 \midd \gamma)
        = \int_{\bR^d}\big[\nabla\log(T_{\theta}{_*\rho_0})\circ T_{\theta}+\nabla V\circ T_{\theta} \big]^\top \nabla_\theta T_{\theta} \dd\rho_0,
    \end{equation}
    which is thus approximated by its empirical counterpart 
    \begin{equation} \label{eq:grad_rel_entropy_param_empiric}
        \frac{1}{n} \sum_{i=1}^n \big[\widehat{s}_\theta\big(T_\theta(x_i)\big) + \nabla V\big(T_\theta(x_i)\big)\big]^\top \nabla_\theta T_\theta(x_i),
    \end{equation}
    where $\widehat{s}_\theta$ is an estimation of $\nabla \log(T_{\theta{} *} \rho_0)$ based on the samples $(T_\theta(x_1),\dots,T_\theta(x_n))$ (see \cref{rem:score_estim} for details).
    \item \emph{\cref{alg:new-train}, explicit scheme.}
    To implement \cref{eq:explicit_scheme},
    we use its empirical counterpart 
        \begin{equation}\label{eq:practical_guided_training}
            \argmin_{\theta\in\Theta}\ \sum_{i=1}^n \Big\| -\big[\widehat{s}_{\theta_k}\big(T_{\theta_k}(x_i)\big)+\nabla V \big(T_{\theta_k}(x_i)\big) \big]- \frac{T_\theta(x_i) - T_{\theta_k}(x_i)}{\tau} \Big\|^2.
        \end{equation}
    This subroutine minimization procedure can be handled in a straightforward way by automatic differentiation as it only depends on $\theta$ through the term $\theta \mapsto T_\theta(x_i) = \nabla \phi_\theta(x_i)$ where $\phi_\theta$ is a neural network. To do so, one may use any optimization procedure (e.g., standard gradient descent, \textsc{adam}, ...). 
    
    \item \emph{\cref{alg:new-train}, implicit scheme.}
    To implement \cref{eq:implicit_scheme},
    we need to compute the gradient 
    \begin{equation}
        \nabla_\theta\Big( H(T_\theta{}_* \rho_0 \midd \gamma) + \frac{1}{2\tau} \| T_\theta - T_{\theta_k}\|^2_{\Lrhoz}\Big).
    \end{equation}
    While the first term requires to be manually computed using \eqref{eq:grad_rel_entropy_param_empiric}, the second term is directly handled using automatic differentiation on the empirical estimate 
    \begin{equation}
        \frac{1}{n} \sum_{i=1}^n \|T_\theta(x_i) - T_{\theta_k}(x_i)\|^2.
    \end{equation}
    Once this is done, one can use the resulting gradient as the input of any optimization procedure (e.g., standard gradient descent, \textsc{adam}, ...). 
\end{itemize}
Also, note that at each time step $k$ in \cref{alg:trad-train,alg:new-train}, we draw new samples $(x_1,\dots x_n)$ to estimate $\rho_0$. The same applies to the solving of the subroutines \cref{eq:explicit_scheme,eq:implicit_scheme}, where new samples are drawn at each time step of the optimization procedure.
\begin{remark}[Estimating the score function] \label{rem:score_estim}
    Both \cref{alg:trad-train,alg:new-train} require to estimate the \emph{score function} $x \mapsto \nabla \log (T_\theta{}_* \rho_0)(x)$ from the samples $(T_\theta(x_1),\dots,T_\theta(x_n))$. 
    We do so by relying on the self-entropic OT potential \cite{mordant2024entropic} of $\widehat{\rho}_0 = \frac{1}{n} \sum_{i=1}^n \delta_{x_i}$. We set the entropic regularization parameter to $5\%$ of the median squared distance in the point cloud $(T_\theta(x_i))_i$, which is an empirical choice used in \texttt{jax-ott} \cite{cuturi2022optimal} and which yields a reasonable behavior in practice. 
    More sophisticated ways of approximating the score could be considered; for instance, relying on denoising diffusion probabilistic models (DDPMs) \cite{ho2020denoising,song2020score}. 
    While these approaches are likely to perform better in complex methods, they rely on a parameterization of the score (typically by another neural network) that would impede our understanding of the numerical behavior of the flow. 
    In our numerical experiments, we therefore prefer to use a simple (yet reasonable) method in order to factor out, as much as possible, the difficult question of parameterizing an estimator of the score function. 
\end{remark}

\begin{remark}[MMD, Sinkhorn divergence, and drifting models]
    When choosing an MMD or the Sinkhorn divergence \cite{feydy2019interpolating} for the functional $D:\PRd\to\bR$, the score (and gradient of the potential) is replaced by the Wasserstein gradient of the corresponding functional. In this case, \cref{eq:practical_guided_training} is similar (up to renormalization factors in the MMD case) to the training dynamic of drifting models \cite{deng2026generative,he2026sinkhorn} on the class of ICNNs. Only few global convergence results for the Wasserstein gradient flows of MMDs are known \cite{boufadene2025global,chizat2026mmd}, and the question is still open for the Sinkhorn divergence (see \cite[Section~4.2]{carlier2024displacement} and \cite{hardion2026wasserstein} for the case of Gaussian measures), which impedes getting theoretical guarantees supporting the convergence of the numerical schemes.
\end{remark}

\subsubsection{Numerical illustrations}

In order to showcase the efficiency of the discretizations of the constrained gradient flow (\cref{alg:new-train}), we compare it to the standard Euclidean gradient descent (\cref{alg:trad-train}) and propose the following proof-of-concept experiment. 

\noindent $(i)$ \textit{Source and target measures.}
The target measure is $\gamma = N(0,I)$, or equivalently, $\gamma \propto e^{-V}$ with $V(x) = \smash{\frac{\|x\|^2}{2}}$.
The source measure $\rho_0$ is a Gaussian mixture with $4$ modes.
Both measures $\rho_0$ and $\gamma$ are sampled with $n=100$ atoms.
See \cref{fig:res} for a visual illustration.
\\
\noindent $(ii)$ \textit{Parameterization of $\K$.}
The parameterization of the set $\K$ of OT maps is $\theta\mapsto T_\theta\coloneqq \nabla \phi_\theta$, where $\phi_\theta$ is a simple ICNN with two hidden layers with 20 units each. We let $\Theta \subset \bR^m$ denote the set of possible parameterizations, with $m = 541$. 

\begin{wrapfigure}{r}{0.45\textwidth}
        \vspace{-1mm}
    \begin{subfigure}[t]{\linewidth}
        \flushright
        \caption{\footnotesize Constrained flow, implicit (ours)}
        \vspace{-1mm}
        \includegraphics[width=\linewidth]{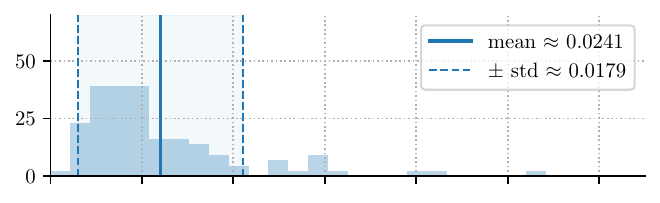}
    \end{subfigure}
    \begin{subfigure}[t]{\linewidth}
        \flushright
        \vspace{-2mm}
        \caption{\footnotesize Constrained flow, explicit (ours)}
        \vspace{-1mm}
        \includegraphics[width=\linewidth]{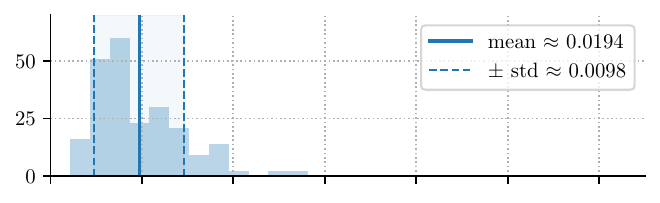}
    \end{subfigure}
    \begin{subfigure}[t]{\linewidth}
        \flushright
        \vspace{-2mm}
        \caption{\footnotesize Euclidean gradient descent}
        \vspace{-1mm}
        \includegraphics[width=\linewidth]{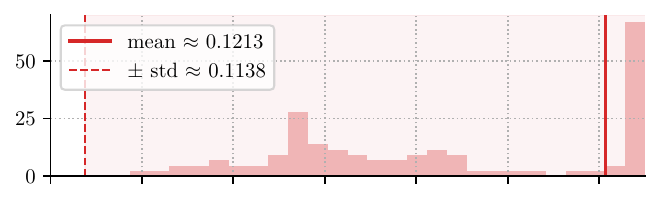}
    \end{subfigure}
    \begin{subfigure}[t]{\linewidth}
        \flushright
        \vspace{-2mm}
        \caption{\footnotesize Euclidean gradient, with \textsc{adam} optimizer}
        \vspace{-1mm}
        \includegraphics[width=\linewidth]{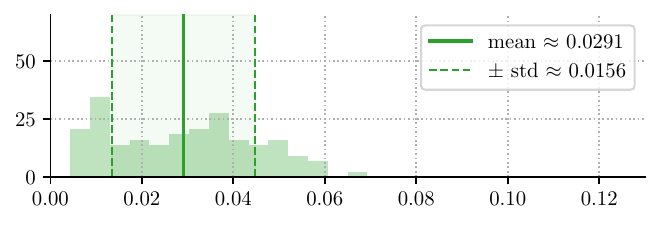}
    \end{subfigure}
  \caption{Histograms of the MMD values that result from methods (\textsc{a,\,b,\,c,\,d}). For each one, we plot and display the values of the mean and standard deviation over the 100 seeds. For method (\textsc{c}), values are clipped at 0.13 for the clarity of display (the mean and standard variation are kept unchanged).} 
  \label{fig:results_hist}
\end{wrapfigure}

\noindent $(iii)$ \textit{Methods to be compared and their parameters.}
We consider the following methods to be compared. First, our two discretizations of the constrained gradient flow:
\begin{enumerate}
    \item[{(\textsc{a})}] \cref{alg:new-train} with implicit scheme \cref{eq:implicit_scheme},
    \item[{(\textsc{b})}] \cref{alg:new-train} with explicit scheme \cref{eq:explicit_scheme},
\end{enumerate}
as well as the standard approach
\begin{enumerate}
    \item[{(\textsc{c})}] \cref{alg:trad-train}, the Euclidean gradient descent \cref{eq:eucl-gd}, 
\end{enumerate}
and, for the sake of completeness,
\begin{enumerate}
    \item[{(\textsc{d})}] \label{item:adam} \cref{alg:trad-train}, \emph{using \textsc{adam} for the optimization};
    yet, we stress that this is not a time discretization of \cref{eq:eucl-gf}, nor a discretization of a gradient flow in general \cite{barakat2021convergence}. 
\end{enumerate}
All four methods depend on the step size $\tau$, and on the number $K$ of iterates on $\theta$ (see \cref{alg:trad-train,alg:new-train}).
Methods (\textsc{a}) and (\textsc{b}) also depend on the number $K'$ of iterates in the minimization subroutines \cref{eq:implicit_scheme,eq:explicit_scheme}, respectively, for which we use the \textsc{adam} optimizer.
The values $\tau = 0.4$ for (\textsc{a,\,b}) and $\tau = 0.05$ for (\textsc{d}) seem to be in favor of their respective methods\footnote{Though, unsurprisingly, all those parameters are sensitive to each other.};
for (\textsc{a,\,b}), we let $K = 10$ and $K' = 100$, while we let $K=1\,000$ for (\textsc{d}), making the comparison between them fair in terms of number of calls to automatic differentiation.
The explicit scheme (\textsc{c}) appears to be numerically quite unstable for large values of $\tau$, and we had to choose a smaller step size $\tau = 0.001$ and do $K = 3\,000$ steps to reach convergence. 
\\
\noindent $(iv)$ \textit{Performance metric.}
The quality of an output $T_{\theta_K}$ is estimated by evaluating the MMD \cref{eq:MMD} with energy distance kernel $k(x,y)=-\|x-y\|$ between $T_{\theta_K{} *} \rho_0$ and $\gamma$, where this time $\rho_0$ and $\gamma$ are sampled with $n_{\text{eval}} = 10\,000$ atoms.
We run the experiment for 100 different seeds, where all methods share the same seed (i.e., start from the same parameter $\theta_0\in\Theta$, use the same samples from $\rho_0$, etc.), and report in \cref{fig:results_hist} the histogram of the values of the MMD as well as their mean and standard deviation.

\noindent
From \cref{fig:results_hist}, one can then make the following observations: 
\begin{itemize}[leftmargin=7mm]
    \item A fairly low MMD can be reached, suggesting that the parameterization $\theta\mapsto T_\theta$ of $\K$ we consider is sufficiently expressive to provide reasonable approximations of the actual OT map between $\rho_0$ and $\gamma$.
    A MMD value of $\approx 0.02$, though higher than the typical distance between two samples of size $10^4$ from $\cN(0,I)$ (suggesting that the parametrization we consider is nonetheless not fully expressive), is visually satisfying, as it can be seen on \cref{fig:res}.
    \item The Euclidean gradient descent (\textsc{c}) has, by a large margin, the worst performance. The mean value of the MMD is $\approx 0.12$, which reflects the fact that the optimization procedure often gets stuck in (visually unsatisfying) local optima---note that a value of $\approx 0.07$ for the MMD is already unsatisfying, as it can be seen on \cref{fig:res}.
    \item Unsurprisingly, switching from standard gradient descent (\textsc{c}) to the \textsc{adam} optimizer (\textsc{d}) yields a substantially better behavior. 
    Yet, there are still a substantial proportion of runs that end up above the $\approx 0.05$ MMD value.
    \item Our approaches (\textsc{a},\,\textsc{b}) yield the best results, with a slight edge and more consistency for the explicit scheme (\textsc{b}). This suggests that the theoretical results derived in \cref{sec:theory} \emph{can} translate into practical algorithms---see \cref{rem!limit_in_practice} for a discussion on the matter. 
\end{itemize}
We eventually stress that the natural gradient descent manages to reach (or be close to) the global minimum in very few steps ($K=10$) in the space of parameters, showcasing the benefits of using an appropriate geometry to update the parameters.
We tackled here the solving of the minimization subroutines \cref{eq:explicit_scheme,eq:implicit_scheme} in a naive and straightforward way (using a gradient descent with $K'=100$ steps); 
if one had an oracle to solve them, \cref{alg:new-train} would be vastly superior, in terms of computational efficiency, to other approaches in the setting we consider ($100$ times more).

\begin{figure}[!h]
    \begin{subfigure}[t]{0.49\textwidth}
        \includegraphics[width=\textwidth]{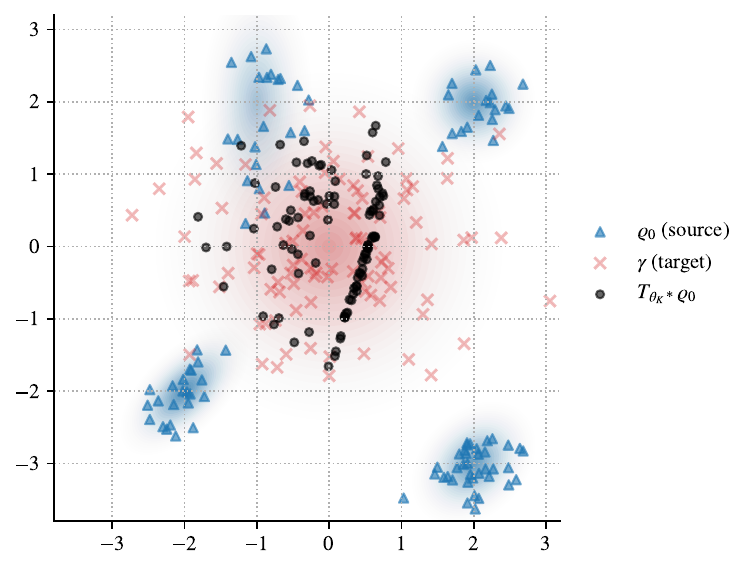}
    \label{fig:resa}
    \end{subfigure}
    \hfill
    \begin{subfigure}[t]{0.49\textwidth}
        \includegraphics[width=\textwidth]{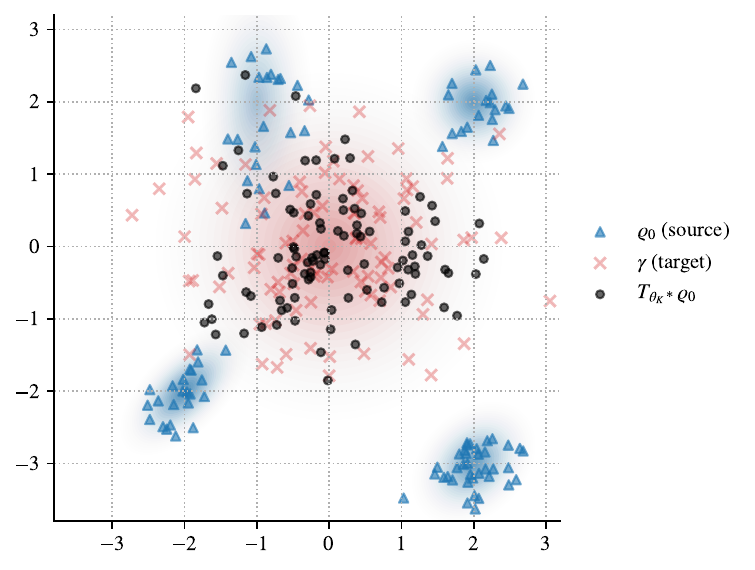}
    \label{fig:resb}
    \end{subfigure}
    \centering
    \vspace{-5mm}
    \caption{(\textit{left}) Example of an unsatisfying learned OT map, obtained with method (\textsc{d}), with a MMD value of $\approx 0.07$. (\textit{right}) Example of a satisfying learned OT map, obtained with method (\textsc{b}), with a MMD value of $\approx 0.02$.}
    \label{fig:res}
\end{figure}

\begin{remark}[From theoretical to numerical convergence guarantees] \label{rem!limit_in_practice}
    This numerical illustration aims at providing an elementary proof-of-concept to support the theoretical guarantees of \cref{sec:theory}: showing that the discretizations of the constrained gradient flow of a well-chosen functional (here, the relative entropy) on $\Theta$ \emph{can} help to reach better estimation of the actual OT map.
    \\
    There are naturally some gaps between the strong theoretical guarantees provided by \cref{thm:convergence} and \cref{cor:entropy} and practical set up we consider here. Namely, it could be that:
    \begin{enumerate}[$(i)$, leftmargin=*]
        \item the parameterization $\theta\mapsto T_\theta$ is not sufficiently expressive, that is, $\argmin_\theta H(T_\theta{}_* \rho_0 \midd \gamma)$ is far from the actual OT map $\optmap{\rho_\smallzero}{\gamma}$. This occurs for too restrictive classes of ICNNs (e.g., a single hidden layer with $10$ units seems to be unable to provide even a decent approximation of the actual OT map in our simple setting, no matter the optimization procedure).
        \item the subroutines \cref{eq:explicit_scheme,eq:implicit_scheme} are not solved exactly, and errors may accumulate over time. 
        \item we implement a gradient descent and not a gradient flow and rely on several estimates (relying on empirical samples, estimating the score, etc.) that can make the optimization scheme unstable.
    \end{enumerate}
    \cref{rem:errors} showed that for method (\textsc{b}), the accumulation of errors in the subroutines $(ii)$ and the time-discretization in point $(iii)$ do not impede the convergence of the scheme. Method (\textsc{a}) might be unstable (see \cref{rem:explicit-gd}), but went well in the setup we consider, possibly thanks to an implicit regularization induced by the neural networks.
    In closing, we believe that understanding whether those numerical schemes are reliable approximations of the constrained gradient flow \cref{eq:constrained-flow} when $n$ is large and when the neural network grows infinitely large is of interest, and left for future work. 
\end{remark}

Implementation details and code to reproduce the showcased experiment can be found in the public repository \url{https://github.com/theodumont/monge-constrained-flow}.

\addtocontents{toc}{\protect\setcounter{tocdepth}{0}}
\section*{Acknowledgements}
\addtocontents{toc}{\protect\setcounter{tocdepth}{1}}
TD wishes to thank Klas Modin and Guillaume Sérieys for precious technical discussions. 
This research is partly supported by the Bézout Labex, funded by ANR, reference ANR-10-LABX-58. 
TL is supported by the ANR project TheATRE ANR-24-CE23-7711.

% \newpage
\addtocontents{toc}{\protect\setcounter{tocdepth}{1}}
\renewcommand*{\bibfont}{\small}
\printbibliography
\newpage

\appendix
\phantomsection
\addcontentsline{toc}{section}{\textbf{Appendix}}

\addtocontents{toc}{\protect\setcounter{tocdepth}{0}}
\section{Additional definitions}
\label{appendix:omitted_notions}

In this section, we gather some additional definitions: generalized geodesics for measures that are not necessarily absolutely continuous (\cref{appendix:generalizedGeod}), cones in Hilbert spaces (\cref{sec:cones}), the divergence operator (\cref{sec:divergence}) and the \nameref*{thm:helmholtz} (\cref{app:helmholtz}).

\subsection{(Generalized) geodesics in the space of probability measures}\label{appendix:generalizedGeod}

In \cref{def:geod_cvx}, we considered geodesics in $\PRd$ in the special case where the source measure $\rho_0$ is absolutely continuous, and generalized geodesics in the special case where the anchor point measure $\bar\rho$ is absolutely continuous. Those notions are usually defined in the following more general setting.

Let $\rho_0,\rho_1\in\PRd$ and let $\pi\in\Pi_\text{o}(\rho_0,\rho_1)$ be an optimal transport plan. 
Then the curve
\begin{equation}
    \label{eq:geodesic}
    \rho_t=[(1-t)p_1+tp_2]_*\pi
\end{equation}
interpolates between $\rho_0$ and $\rho_1$. 
It can be shown to be a constant speed \emph{geodesic} in $\PRd$ (that is, $\W_2(\rho_s,\rho_t)=|t-s|\W_2(\rho_0,\rho_1)$ for all $0\leq s,t\leq1$), and all constant speed geodesics are of the form \cref{eq:geodesic} \cite[Theorem~7.2.2]{ambrosio2008gradient}.
A functional $D:\PRd\to\bR$ is said to be \emph{$\lambda$-convex along geodesics} for $\lambda \in \bR$ if for all $\rho_0,\rho_1\in \PRd$ there exists a curve $\rho_t$ of the form \cref{eq:geodesic} such that
\begin{equation}
    \label{eq:cvx-wass}
    D(\rho_t)\leq (1-t)D(\rho_0)+tD(\rho_1)-\frac\lambda2t(1-t)\W_2(\rho_0,\rho_1)^2.
\end{equation}
It is sometimes useful to require $D$ to be convex along more curves than the mere set of geodesics. 
Let $\bar\rho,\rho_0,\rho_1\in\PRd$. 
A \emph{generalized geodesic} between $\rho_0$ and $\rho_1$ with anchor point $\bar\rho$ is a curve of the form
\begin{equation}
    \label{eq:gen-geodesic}
    \rho_t=[(1-t)p_2+tp_3]_*\smash{\widetilde\pi},
\end{equation}
where $\smash{\widetilde\pi}\in\cP(\bR^d\times\bR^d\times\bR^d)$ has $\bar\rho,\rho_0$ and $\rho_1$ as first, second and third marginals, respectively, and such that $(p_1,p_2)_*\smash{\widetilde\pi}$ is an optimal transport plan between $\bar\rho$ and $\rho_0$ and $(p_1,p_3)_*\smash{\widetilde\pi}$ is an optimal transport plan between $\bar\rho$ and $\rho_1$.
This curve interpolates between $\rho_0$ and $\rho_1$.
The functional $D$ is said to be \emph{$\lambda$-convex along generalized geodesics} if for all $\bar\rho,\rho_1,\rho_2\in\PRd$, there exists a curve of the form \cref{eq:gen-geodesic} such that
\begin{equation}
    \label{eq:cvx-wass-gen}
    D(\rho_t)\leq (1-t)D(\rho_0)+tD(\rho_1)-\frac\lambda2t(1-t)\iiint_{\bR^d\times\bR^d\times\bR^d}\|y-z\|^2\dd\widetilde\pi(x,y,z).
\end{equation}
When choosing $\bar\rho=\rho_0$ in \cref{eq:gen-geodesic}, one recovers the geodesics \cref{eq:geodesic}; hence convexity along generalized geodesics is strictly stronger than mere convexity along geodesics.
If \cref{eq:cvx-wass} (resp.~\cref{eq:cvx-wass-gen}) holds for $\lambda=0$, we simply say that $D$ is \emph{convex} along geodesics (resp.~generalized geodesics).

\subsection{Cones in Hilbert spaces}
\label{sec:cones}
A (nonempty) subset $C$ of some Hilbert space $\cH$ is said to be a \emph{cone} if it is stable by the nonnegative scalings $v\mapsto \alpha v$ for all $\alpha\geq0$. The \emph{polar cone} of $C$ is defined as the cone
\begin{equation}
    C^*\coloneqq\{v\in \cH\mid\text{for all }w\in C,\, \langle v,w\rangle\leq0\}.
\end{equation}
If $C$ is convex, one has $C^{**}=\overline C$ and if $C$ is additionally closed, one has the \emph{Moreau decomposition} \cite{moreau1962decomposition}
\begin{equation}
    \label{eq:moreau}
    \text{for all }v\in\cH,\quad v=\proj_C(v)+\proj_{C^*}(v),
\end{equation}
where $\proj_C:v\mapsto \argmin_w\|w-v\|$ is the projection onto the nonempty closed convex set $C$ in the Hilbert space $\cH$.
This projection is characterized by the following equivalence \cite[Theorem~5.2]{brezis2011functional}
\begin{equation}
    u=\proj_C(v)\iff  \text{for all }w\in C,\ \langle v-u,w-u \rangle\leq0.
\end{equation}
Let $K$ be a convex subset of $\cH$ and let $x\in K$. The (Clarke) \emph{normal cone of $K$ at $x$} is the closed convex cone
\begin{equation}
    \label{eq:def-nor}
    \Nor_x\!K\coloneqq\{v\in \cH\mid \text{for all } y\in K,\,\langle v,y-x\rangle\leq0\},
\end{equation}
and the (Clarke) \emph{tangent cone of $K$ at $x$} is the closed convex cone
\begin{equation}
    \label{eq:def-tan}
    \Tan_x\!K\coloneqq\overline{\{v\in\cH \mid \text{there exists } t>0 \text{ such that } x+tv\in K\}}.
\end{equation}
See \cite[6.9 Theorem]{rockafellar1998variational}. Note that by convexity of $K$, this definition is equivalent to 
\begin{equation}
    \Tan_x\!K\coloneqq\overline{\{v\in\cH \mid \text{there exists } t_0>0 \text{ such that for all } t\leq t_0,\, x+tv\in K\}}.
\end{equation}
The normal and tangent cones are \emph{polar} one to each other, in the sense that $(\Nor_x\!K)^*=\Tan_x\!K$ and $(\Tan_x\!K)^*=\Nor_x\!K$. 

\begin{remark}[Tangent and normal cones in the nonconvex setting]
    If $K$ is not assumed to be convex, one can also define the Clarke tangent cone as
    \begin{equation}
        \Tan_x\!K\coloneqq \smash{\liminf_{\substack{K\ni y\to x\\t\to0}}} t^{-1}(K-y),
    \end{equation}
    or equivalently as 
    \begin{equation}
    \Tan_x\!K\coloneqq\bigg\{
      v\in\cH \ \bigg|\
      \begin{aligned}
      & \text{for all } (x_k)_k\subset K \text{ such that } x_k\to x, \text{ for all } t_k\to0,\\[-1mm]
      & \text{there exists } v_k\to v\text{ s.t.~}x_k+t_kv_k\in K\text{ for all }k\in\bN
      \end{aligned}
    \bigg\}
    \end{equation}
    and the Clarke normal cone $\Nor_x\!K$ as its polar cone (see \cite[6.2 Proposition]{rockafellar1998variational} or \cite[Definition 1.8]{mordukhovich2009variational}). Those definitions coincide with \cref{eq:def-nor,eq:def-tan} whenever $K$ is convex \cite[6.9~Theorem]{rockafellar1998variational}, which is the setting of this paper.
\end{remark}

\subsection{Divergence}
\label{sec:divergence}
Let $\fX_c$ denote the space of compactly-supported smooth vector fields on $\bR^d$ and $C^\infty_c(\bR^d,\bR)$ denote the space of compactly-supported smooth functions on $\bR^d$.
Let $\dd x$ denote the Lebesgue measure. The \emph{divergence operator} is defined as
\begin{equation}
    \div:\fX_c\to C^\infty_c(\bR^d,\bR)^*,\qquad \langle\div(v),f\rangle\coloneqq -\int_{\bR^d}df(v)\dd x.
\end{equation}
It is a bounded linear operator; by density of $\fX_c$ in $L^2(\bR^d,\bR^d)$, it therefore extends to $L^2(\bR^d,\bR^d)$. We use the same notation for the extended operator.
Let now $\rho_0\in\PRdac$ be a probability measure with a density with respect to the Lebesgue measure. Then the quantity $\div(\rho_0 v)$ is well-defined whenever $v$ belongs to $\LrhozRd$.
See, for instance, \cite[Definition 2.6]{gangbo2011differential}.

\subsection{Helmholtz--Hodge decomposition}
\label{app:helmholtz}
Let us recall the well-known Helmholtz--Hodge decomposition of vector fields (see, e.g., \cite[Box~6.2]{santambrogio2015optimal}).
\begin{theorem*}[Helmholtz--Hodge decomposition]
    \label{thm:helmholtz}
    Let $\Omega\subset\bR^d$ be a compact domain.
    Let $\rho_0\in\cP(\Omega)$ be a probability measure that has a density with respect to the Lebesgue measure.
    Then every vector field $v\in\Lrhoz(\Omega,\bR^d)$ can be decomposed into the sum of a gradient field $\nabla f$ and of a $\rho_0$-divergence-free vector field, that is,
    \begin{equation}
        \label{eq:helmholtz}
        v=\nabla f+w,\quad\text{and}\quad \div(\rho_0 w)=0,
    \end{equation}
    with $\nabla f,w\in \Lrhoz(\Omega,\bR^d)$.
    Assume additionally that $\rho_0>0$ almost everywhere.
    If one imposes Neumann boundary conditions for $w$, then this decomposition is unique, and the function $f$
    is the unique solution to the variational problem
    \begin{equation}
        \argmin_{f\in \dHrhoz(\Omega,\bR^d)}\int_\Omega \|v-\nabla f\|^2\dd\rho_0
    \end{equation}
    under the condition $\int_\Omega f=0$, as well as the unique solution to the elliptic equation
    \begin{equation}
        \div(\rho_0\nabla f)=\div(\rho_0 v).
    \end{equation}
\end{theorem*}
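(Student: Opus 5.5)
The plan is to obtain the decomposition as an orthogonal projection in the Hilbert space $\Lrhoz(\Omega,\bR^d)$. Let $G\subset\Lrhoz(\Omega,\bR^d)$ be the closure of $\{\nabla\varphi\mid\varphi\in C^\infty(\overline\Omega)\}$. We deliberately do not impose compact support on $\varphi$: this is what will encode the Neumann condition. Since $G$ is a closed subspace, every $v$ splits uniquely as $v=\proj_G v+(v-\proj_G v)$. We set $w\coloneqq v-\proj_G v\in G^\perp$. Orthogonality to $G$ reads
\[
\int_\Omega\langle w,\nabla\varphi\rangle\dd\rho_0=0\qquad\text{for all }\varphi\in C^\infty(\overline\Omega).
\]
This is precisely the weak formulation of $\div(\rho_0w)=0$ in $\Omega$ together with $\rho_0\,w\cdot n=0$ on $\partial\Omega$. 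Restricting to $\varphi\in C^\infty_c$ gives the distributional identity, in the sense of \cref{sec:divergence}. This yields existence of the decomposition, provided we can write $\proj_G v=\nabla f$.

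The key step, and the main obstacle, is to show that every element of $G$ is a genuine gradient $\nabla f$ with $f\in\dHrhoz$. That is, we need $G=\{\nabla f\mid f\in\dHrhoz(\Omega)\}$, so that this set of gradients is closed. I would take $\nabla\varphi_n\to u$ in $\Lrhoz$ and normalize $\int_\Omega\varphi_n=0$. On any region where $\rho_0$ is bounded below, the convergence holds in $L^2$ without weight, hence in $L^1_{\mathrm{loc}}$. A Poincaré inequality on a connected subdomain then gives convergence of the $\varphi_n$ in $L^1_{\mathrm{loc}}$ to some $f$. Closedness of the distributional gradient then yields $\nabla f=u$ there. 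Exhausting $\{\rho_0>0\}$ (assumed a.e.\ for uniqueness, and $\Omega$ taken connected) produces a single $f$. In the existence part, where $\rho_0$ may vanish, $f$ only matters $\rho_0$-a.e., so one may simply define $\dHrhoz$ as this closure. If the weighted Poincaré argument is delicate, I would fall back on the curl-free characterization: $L^1_{\mathrm{loc}}$-limits of gradients are curl-free, hence gradients on simply connected pieces.

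For the variational characterization, $f\mapsto\int_\Omega\|v-\nabla f\|^2\dd\rho_0$ is minimized exactly when $\nabla f=\proj_G v$, by the defining property of the projection. When $\rho_0>0$ a.e., two minimizers have $\rho_0$-a.e.\ equal gradients, hence Lebesgue-a.e.\ equal gradients. They therefore differ by a constant on the connected domain, and the normalization $\int_\Omega f=0$ fixes it. The first-order optimality condition, obtained by perturbing $f$ by $\varphi\in C^\infty(\overline\Omega)$, is
\[
\int_\Omega\langle v-\nabla f,\nabla\varphi\rangle\dd\rho_0=0 .
\]
This is the weak Neumann form of $\div(\rho_0\nabla f)=\div(\rho_0v)$. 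Conversely, any weak solution of this equation satisfies the projection characterization, so the same argument gives uniqueness of the solution, again up to the normalization. Finally, uniqueness of the pair $(\nabla f,w)$ follows because $G\cap G^\perp=\{0\}$: any decomposition with $\nabla f\in G$ and $w$ satisfying the Neumann divergence-free condition must be the orthogonal one.
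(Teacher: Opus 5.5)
The paper gives no proof of this statement; it is only recalled, with a reference. The classical argument behind it is exactly your skeleton: orthogonal projection onto gradients in $\Lrhoz$, i.e.\ the weighted Neumann problem. The genuine gap is the step you flag yourself, namely identifying $G$ with $\{\nabla f\mid f\in\dHrhoz\}$. It cannot be closed under the stated hypotheses.

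\textbf{The inclusion $G\subset\{\nabla f\}$.} Your argument needs open, connected sets on which $\rho_0\geq\epsilon>0$, exhausting a connected open set of full measure. The hypothesis ``$\rho_0>0$ a.e.'' provides neither. The set $\{\rho_0>\epsilon\}$ may have empty interior for every $\epsilon>0$ (e.g.\ $\rho_0\propto\min\{1,\inf_k 2^k|x-q_k|\}$ with $(q_k)_k$ dense). Even when it is open, it may disconnect $\Omega$.

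Concretely, take $d=1$, $\Omega=[-1,1]$, $\rho_0=c\,e^{-1/|x|}$ and $v=e^{1/(4|x|)}$. Then $\int v^2\dd\rho_0<\infty$, but $v\notin L^1_{\mathrm{loc}}$ near $0$.
\begin{itemize}
\item Since every smooth function is a derivative, $G=\Lrhoz$ and $\proj_G v=v$.
\item Yet $v$ is not the distributional derivative of anything. Your exhaustion of $[-1,1]\setminus\{0\}$ only produces two primitives with independent constants, both blowing up at $0$.
\item Moreover, $(\rho_0w)'=0$ forces $\rho_0w\equiv C$, and then $\int w^2\dd\rho_0=C^2\int\rho_0^{-1}\dd x=\infty$ unless $C=0$.
\end{itemize}
Hence \emph{no} decomposition $v=f'+w$ with $f\in\dHrhoz$ exists, and the variational problem has infimum $0$ but no minimizer. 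Redefining $\dHrhoz$ as the closure, as you suggest, rescues existence only by turning $f$ into a formal symbol. The claims ``$f$ is unique with $\int_\Omega f=0$'' and ``$f$ solves the elliptic equation'' then lose their content.

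\textbf{The reverse inclusion $\{\nabla f\mid f\in\dHrhoz\}\subset G$.} This is density of $\nabla C^\infty(\overline\Omega)$ among weighted gradients. You never address it, but you use it silently in three places:
\begin{itemize}
\item when you say that minimizing over $f\in\dHrhoz$ amounts to projecting onto $G$;
\item when you assume $\nabla f\in G$ for an arbitrary competing decomposition;
\item when you assume the same for an arbitrary weak solution of the elliptic equation.
\end{itemize}
For degenerate weights this is the ``$H=W$'' problem of weighted Sobolev spaces, and it can fail even with $\rho_0>0$ a.e. Suppose some $\nabla g\notin G$ while $G$ consists of gradients. Then $h=\nabla g-\proj_G\nabla g\neq0$ is both an admissible gradient and a weakly Neumann $\rho_0$-divergence-free field. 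So $v=h$ has the two decompositions $(h,0)$ and $(0,h)$.

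\textbf{What is missing.} You need an additional hypothesis, not a cleverer argument. For instance, take $\Omega$ connected with Lipschitz boundary and $\rho_0$ bounded above and below by positive constants, the kind of assumption made in \cref{lem:qg-entropy}. Then:
\begin{itemize}
\item $\Lrhoz$ and $L^2(\Omega)$ coincide with equivalent norms;
\item Poincar\'e--Wirtinger on $\Omega$ gives $G\subset\{\nabla f\}$;
\item density of $C^\infty(\overline\Omega)$ in $H^1(\Omega)$ gives the converse.
\end{itemize}
With this, the rest of your proof goes through as written.
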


\subsection{The Wasserstein--Otto metric}
\label{app:wasserstein-metric}
The set $\PRd$ of probability measures can formally be seen as an infinite-dimensional Riemannian manifold, endowed with the following Riemannian metric:
\begin{equation}
    g^{\text{WO}}_\rho(\delta\rho,\delta\rho)=\int_{\bR^n} \|\nabla p\|^2\dd\rho,\qquad\text{where } \delta\rho=-\div(\rho\nabla p),
\end{equation}
see \cite{lafferty1988density,otto2001geometry,benamou2000computational}.
The induced distance on $\PRd$ is the Wasserstein distance \cref{eq:kantorovitch}.

\section{Omitted proofs and results}
\label{appendix:proofs}

In this section, we gather some proofs that were omitted in the main part of the paper: proofs of the expression \cref{eq:wass-gradient} of the Wasserstein gradient (\cref{prop:gradient-first-var}), of the first-order optimality condition \cref{eq:optimality-condition-div} for the constrained gradient flow (\cref{sec:optimality-condition}), and of the quadratic minimization formula \cref{eq:minimization-general} for the natural gradient (\cref{app:natural-gd}).
We also include some properties of lifted functionals (\cref{sec:lifting}) that were omitted during the paper.

\subsection{Wasserstein gradient and first variation}
We prove here the expression \cref{eq:wass-gradient} of the Wasserstein gradient.

\label{prop:gradient-first-var}
\begin{proposition}[The Wasserstein gradient is the gradient of the first variation]
    Let $D:\PRd\to\bR$. Assume that $D$ has a first variation $\smash{\frac{\delta D}{\delta\rho}}$ that is differentiable and that $D$ is regular, in the sense of \cite[Definition 10.1.4]{ambrosio2008gradient}. Then for all $\rho\in\PRd$,
    \begin{equation}
        \nablaa D(\rho)=\nabla \frac{\delta D}{\delta\rho}(\rho).
    \end{equation}
\end{proposition}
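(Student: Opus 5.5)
The plan is to compare the defining expansion of the Wasserstein gradient with the one produced by the first variation, along a suitable family of perturbations. Then we use the minimal-norm characterization together with the tangent-space membership \cref{eq:tangent_space_PRd} to identify the two vectors. Fix $\rho\in\PRd$ and write $\psi\coloneqq\frac{\delta D}{\delta\rho}(\rho)$. We first check that $\nabla\psi\in\Lrho(\bR^d,\bR^d)$; this is part of what regularity in \cite[Definition 10.1.4]{ambrosio2008gradient} provides, since that definition asks for a strong subdifferential structure.

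\emph{Step 1: directional derivatives along smooth pushforwards.} For $\xi\in C^\infty_c(\bR^d,\bR^d)$ and small $s$, set $\rho_s\coloneqq(\id+s\xi)_*\rho$. By the definition of the first variation and the chain rule,
\[
\frac{\dd}{\dd s}\Big|_{s=0}D(\rho_s)=\int_{\bR^d}\frac{\delta D}{\delta\rho}(\rho)\,\dd\Big(\frac{\dd}{\dd s}\Big|_{s=0}\rho_s\Big)=\int_{\bR^d}\psi\,\dd\big(-\div(\rho\xi)\big)=\int_{\bR^d}\langle\nabla\psi,\xi\rangle\dd\rho,
\]
where the last equality uses the differentiability of $\psi$ and integration by parts in the weak sense of \cref{sec:divergence}.

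\emph{Step 2: the same derivative from the Wasserstein gradient.} For $|s|$ small, $\id+s\xi$ is the gradient of the strictly convex function $\frac12\|x\|^2+s\varphi$ whenever $\xi=\nabla\varphi$ with $\varphi\in C^\infty_c$. Hence $(\id,\id+s\xi)_*\rho$ is the unique optimal plan between $\rho$ and $\rho_s$ (Brenier, or cyclical monotonicity if $\rho$ is not absolutely continuous). Moreover $\W_2(\rho,\rho_s)\le |s|\,\|\xi\|_{\Lrho}$. Plugging this plan into \cref{eq:gradient-W} gives
\[
D(\rho_s)-D(\rho)=s\int\langle\nablaa D(\rho),\xi\rangle\dd\rho+o(s).
\]
Comparing with Step 1 yields
\[
\int\langle\nablaa D(\rho)-\nabla\psi,\nabla\varphi\rangle\dd\rho=0\qquad\text{for all }\varphi\in C^\infty_c.
\]

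\emph{Step 3: identification.} The previous identity says that $\nablaa D(\rho)-\nabla\psi$ is $\Lrho$-orthogonal to $\Tan_\rho\PRd$. Both vectors lie in $\Tan_\rho\PRd$:
\begin{itemize}
\item $\nablaa D(\rho)$ does, as recalled after \cref{eq:gradient-W};
\item $\nabla\psi$ does if it can be approximated in $\Lrho$ by gradients of $C^\infty_c$ functions, which we obtain by a cutoff-and-mollify argument on $\psi$.
\end{itemize}
Their difference is therefore zero.

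The main obstacle is Step 3's membership $\nabla\psi\in\Tan_\rho\PRd$, together with the justification of differentiating under the integral in Step 1. This is exactly where the regularity assumption must be used. Our first attempt will be to invoke \cite[Lemma 10.1.2 / Proposition 10.4.2]{ambrosio2008gradient}, which show for regular functionals that the minimal-norm subdifferential is $\nabla\frac{\delta D}{\delta\rho}$ and lies in the tangent space. If that fails, we fall back on a truncation $\psi\chi_R$ mollified, with dominated convergence using $\nabla\psi\in\Lrho$.
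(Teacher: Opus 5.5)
Your argument is the paper's at its core: perturb by $(\id+s\nabla\varphi)_*\rho$, compute the derivative once from \cref{eq:gradient-W} and once from the first variation, and conclude by orthogonality inside $\Tan_\rho\PRd$. The genuine difference is how you reach an arbitrary $\rho\in\PRd$.

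\textbf{The paper's route.} It runs the computation only for absolutely continuous $\rho$, where Brenier's theorem certifies that $\id+s\nabla\varphi$ is optimal. It then passes to general $\rho$ by approximation. That step is the only place where regularity in the sense of \cite[Definition~10.1.4]{ambrosio2008gradient} enters. It also tacitly needs some continuity of $\rho\mapsto\nabla\frac{\delta D}{\delta\rho}(\rho)$ to identify the limit of the subgradients.

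\textbf{Your route.} You observe that $(\id,\id+s\nabla\varphi)_*\rho$ is optimal for every $\rho$, because it is concentrated on the graph of the gradient of the convex function $\frac12\|x\|^2+s\varphi$. You do not even need uniqueness of the plan, since \cref{eq:gradient-W} holds for every selection of optimal plans. This gives a single argument for all $\rho$ and makes the approximation step unnecessary.

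\textbf{What follows for regularity.} Regularity becomes idle in your proof. So do not credit it with $\nabla\psi\in\Lrho$ or $\nabla\psi\in\Tan_\rho\PRd$. Definition~10.1.4 is a closedness property of strong subdifferentials along converging sequences of measures and says nothing about the first variation. Those two facts, and the chain rule in Step~1, are left implicit in the paper too; it simply asserts that the difference ``also belongs to $\Tan_\rho\PRd$''. You are right to flag them.

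\textbf{A fix for your fallback.} If you write out the cutoff-and-mollify argument, a pure spatial cutoff fails as stated. The term $\psi\nabla\chi_R$ in $\nabla(\chi_R\psi)$ is not dominated by $|\nabla\psi|$ unless you have growth control on $\psi$. Truncate the values first:
\begin{itemize}
\item Replace $\psi$ by $M\Theta(\psi/M)$, with $\Theta$ smooth and bounded, $|\Theta'|\le 1$, and $\Theta(t)=t$ on $[-1,1]$.
\item The gradients $\Theta'(\psi/M)\nabla\psi$ then converge to $\nabla\psi$ in $\Lrho$ by dominated convergence.
\item The cutoff error is now $O(M/R)$.
\item Finally mollify. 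This requires $\nabla\psi$ to be continuous, so that the gradients converge uniformly on a fixed compact set, and hence in $\Lrho$ even when $\rho$ is singular.
\end{itemize}
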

\begin{proof}
    Let us first assume that $\rho$ has a density with respect to the Lebesgue measure.
    Consider then the curve $\rho_\eps=(\id+\eps v)_*\rho$, for some $v=\nabla f\in\Tan_\rho\PRd$. Then for $\eps$ small enough, the map $(\id+\eps v)$ is the gradient of a convex function, hence an optimal transport map. By definition of the Wasserstein gradient of $D$ and by definition of the first variation of $D$,
    \begin{multline}
        \int_{\bR^d}\langle \nablaa D(\rho),v\rangle\dd\rho
        =\lim_{\eps\to0}\eps^{-1}\big(D(\rho_\eps)-D(\rho)\big)
        =\int_{\bR^d}\frac{\delta D}{\delta \rho}(\rho)\partial_\eps|_{\eps=0}\rho_\eps(x)
        \\
        =-\int_{\bR^d}\frac{\delta D}{\delta \rho}(\rho)\div(\rho v)
        =\int_{\bR^d}\langle\nabla\frac{\delta D}{\delta \rho}(\rho),v\rangle\dd\rho,
    \end{multline}
    since $\partial_\eps|_{\eps=0}\rho_\eps$ is given by $\partial_\eps|_{\eps=0}\rho_\eps=-\div(\rho v)$ and using an integration by parts. A continuity argument therefore shows that $\nablaa D(\rho)-\nabla\frac{\delta D}{\delta\rho}(\rho)$ is orthogonal to $\Tan_\rho\PRd$ and since it also belongs to $\Tan_\rho\PRd$, it is equal to zero, that is, $\nablaa D(\rho)=\nabla \frac{\delta D}{\delta\rho}(\rho)$, $\rho$-almost everywhere.
    In the case where $\rho$ does not have a density, one may approximate $\rho$ with absolutely continuous measures and invoke the regularity of $D$ to conclude.
\end{proof}

\subsection{First-order optimality condition}
\label{sec:optimality-condition}
We prove here the first-order optimality condition \cref{eq:optimality-condition-div}.
\begin{lemma}[First-order optimality condition for the constrained gradient flow]
    Let $T\in\K$, let $v\in\LrhoRd$ and let
\begin{equation}
    \bar w\coloneqq\argmin_{w\in \Tan_{T}\!\!\K}J(w),
    \qquad\text{where}\quad
    J(w)\coloneqq {\int_{\bR^d}}\|v\circ T-w\|^2\dd\rho_0.
\end{equation}
Assume that $\nabla(\dHrhozRd\cap C^2_c(\bR^d,\bR))\subset\Tan_{T}\!\K$. Then $\bar w$ satisfies
\begin{equation}
    \div(\rho_0\bar w)=\div(\rho_0v\circ T).
\end{equation}
\end{lemma}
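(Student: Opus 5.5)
The plan is to derive the condition from the variational characterization of the projection onto the closed convex cone $\Tan_T\!\K$. Since $\bar w=\proj_{\Tan_T\K}(v\circ T)$, the characterization recalled in \cref{sec:cones} (equivalently \cref{eq:brezis-proj} in the proof of \cref{lem:charac}) gives
\begin{equation}
    \langle v\circ T-\bar w,\,u-\bar w\rangle_{\Lrhoz}\leq 0\qquad\text{for all }u\in\Tan_T\!\K.
\end{equation}
Fix $\xi\in\dHrhozRd\cap C^2_c(\bR^d,\bR)$. By assumption both $\nabla\xi$ and $-\nabla\xi=\nabla(-\xi)$ belong to $\Tan_T\!\K$.

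The key step is to test with $u=\bar w\pm s\nabla\xi$, $s>0$, which requires these to lie in the cone. Since $\Tan_T\!\K$ is a convex cone, it is closed under addition: $a+b=2(\tfrac12 a+\tfrac12 b)$ lies in it for $a,b$ in it. Hence $\bar w\pm s\nabla\xi\in\Tan_T\!\K$. Plugging these in and dividing by $s$ gives both $\pm\langle v\circ T-\bar w,\nabla\xi\rangle_{\Lrhoz}\leq0$, that is,
\begin{equation}
    \int_{\bR^d}\langle v\circ T-\bar w,\nabla\xi\rangle\dd\rho_0=0\qquad\text{for all such }\xi.
\end{equation}
Equivalently, one can expand $s\mapsto J(\bar w+s\nabla\xi)$ and note that it is minimized at $s=0$ along a full line contained in the cone.

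The final step converts this orthogonality into the distributional identity. Using the definition of $\div$ from \cref{sec:divergence}, $\langle \div(\rho_0 u),f\rangle=-\int\langle u,\nabla f\rangle\dd\rho_0$ for $u\in\LrhozRd$. The previous display then reads $\langle\div(\rho_0\bar w),\xi\rangle=\langle\div(\rho_0 v\circ T),\xi\rangle$ for every admissible $\xi$.

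The main obstacle is checking that the test class is large enough to identify the distributions, which are defined by pairing against $C^\infty_c(\bR^d,\bR)$. Here it is: any $\xi\in C^\infty_c$ has bounded gradient, so $\nabla\xi\in\LrhozRd$ because $\rho_0$ is a probability measure, and thus $\xi\in\dHrhozRd\cap C^2_c$. Both pairings are also well-defined since $\bar w, v\circ T\in\LrhozRd$; for $v\circ T$ this follows from $v\in\Lrho$ with $\rho=T_*\rho_0$, which I interpret as the intended hypothesis. This yields $\div(\rho_0\bar w)=\div(\rho_0 v\circ T)$ in the sense of distributions.
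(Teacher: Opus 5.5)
Your proof is correct and follows the paper's argument: you test the optimality of $\bar w$ against the two-sided variations $\bar w\pm s\nabla\xi$ and read the resulting orthogonality $\int\langle v\circ T-\bar w,\nabla\xi\rangle\dd\rho_0=0$ as the distributional identity. You also make explicit two points the paper leaves implicit: that $\bar w\pm s\nabla\xi$ stays in the convex cone $\Tan_T\!\K$ (by closure under addition), and that $C^\infty_c$ test functions already lie in the admissible class, so no density argument is needed.
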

\begin{proof}
    Taking variations $w_\eps\coloneqq\bar w+\eps\nabla\xi$ for $\xi\in\dHrhozRd\cap C_c^2(\bR^d,\bR)$, the optimality of $\bar w$ gives
    \begin{equation}
        0=\frac{\dd}{\dd \eps}J(\bar w+\eps\nabla \xi)\Big|_{\eps=0}=-2\int_{\bR^d}\langle v\circ T-\bar w,\nabla\xi\rangle\dd\rho_0=\int_{\bR^d}\xi\div(\rho_0(v\circ T-\bar w))\dd x.
    \end{equation}
    Since $\xi$ is arbitrary and by density of $C_c^2(\bR^d,\bR)$ in $\dHrhozRd$, one gets the result.
\end{proof}

\subsection{Natural gradient via quadratic minimization}
\label{app:natural-gd}
We prove here the quadratic minimization formula \cref{eq:minimization-general} for the natural gradient.

\begin{nblemma}{\ref*{prop:natural-gd}}[\naturalgdtitle]
\naturalgdall
    \begin{equation}
    \tag{\ref*{eq:minimization-general}}
    \label{eq:minimization-general-app}
        \argmin_{\delta\theta\in T_\theta\Theta}\,\big\|\!\grad^g_MF(\sigma_\theta)-d_\theta \sigma[\delta\theta]\big\|_{g}^2
    \end{equation}
    is unique and equal to $\grad^{\sigma^*g}_\Theta L(\theta)$.
\end{nblemma}
    \begin{proof}
    \label{sec:natural-gd-proof}
    Multiplying by $\frac12$ and expanding the squared norm gives that a solution $\delta\theta^\star$ of \cref{eq:minimization-general-app} also minimizes the quantity
    \begin{align}
        R(\delta\theta)
        \coloneqq&\,\frac12\big\|d_\theta \sigma[\delta\theta]\big\|_g^2-g_{\sigma_\theta}\big(\!\grad^g_MF(\sigma_\theta),d_\theta \sigma[\delta\theta]\big)\\
        =&\,\frac12\big\|\delta\theta\big\|_{\sigma^*g}^2-d_{\sigma_\theta}F\big[d_\theta \sigma[\delta\theta]\big]\\
        =&\,\frac12\big\|\delta\theta\big\|_{\sigma^*g}^2-d_{\theta}(F\circ \sigma) [\delta\theta],
    \end{align}
    where we used the definitions of the Riemannian gradient and of the pullback metric $\sigma^*g$.
    Differentiating with respect to $\delta\theta$ at optimality yields
    \begin{equation}
        0=(\sigma^*g)(\delta\theta^\star,\cdot)-d_{\theta}(F\circ \sigma)[\cdot],
    \end{equation}
    hence
        $\delta\theta^\star=\grad^{\sigma^*g}_\Theta L(\theta)$
    by definition of the Riemannian gradient again, and the proof is complete.
    \end{proof}

\subsection{Properties of lifted functionals}
\label{sec:lifting}

In this section, we provide several results on functionals that are lifted from $\PRd$ to $\LrhozRd$, which we will need to show the existence of solutions to the constrained gradient flow \cref{eq:constrained-flow} (\cref{thm:flow-defini}) and its convergence (\cref{thm:convergence}). Recall that $\rho_0\in\PRdac$ is an absolutely continuous probability measure and that $\pi:T\mapsto T_*\rho_0$ is the associated pushforward mapping.
Let $D:\PRd\to\bR$ be some functional on $\PRd$ and $F=D\circ\pi:\LrhozRd\to\bR$ be its lifted functional as defined in \cref{def:lifted}; to put it visually, $D$, $F$ and $\pi$ fit in the following diagram:
    \begin{center}
    \begin{tikzcd}
    \LrhozRd \vphantom{L}
    \ar[d, "\pi"]
    \ar[dr, "F"]
    \\
    \PRd \vphantom{L}
    \ar[r, "D"]
    &
    \bR.
    \end{tikzcd}
    \end{center}
Several properties of $D$ can be lifted to similar ones on $F$, and we make those links clear in the following lemmas. More precisely, we link the set of minimizers (\cref{lem:minimizers}), the convexity (\cref{lem:cvx-simple}), the lower semicontinuity (\cref{cor:lsc-topo}), and the differentiability (\cref{lem:frechet-diff}) of $F$ in $\LrhozRd$ to that of $D$ in $\PRd$.

\begin{lemma}[Minimizers of the lifted functional]
    \label{lem:minimizers}
    Consider $\rho_0$, $D$ and $F$ defined above.
    Then, minimizers of $D$ in $\PRd$ and minimizers of $F$ in $\LrhozRd$ and in $\K$ relate as follows:
    \begin{equation}
        \argmin_{\PRd} D=\pi(\argmin_{\LrhozRd} F)=\pi(\argmin_{\K} F),
    \end{equation}
    and $D$ has a unique minimizer in $\PRd$ if and only if $F$ has a unique minimizer in $\K$.
\end{lemma}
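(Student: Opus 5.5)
The plan is to prove the two equalities by two inclusions each, with Brenier's theorem as the main tool, and then to read off the uniqueness statement.

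\emph{Step 1: the value sets agree.} First observe that $\pi(\LrhozRd)=\PRd$ and $\pi(\K)=\PRd$. The inclusion $\pi(\LrhozRd)\subset\PRd$ holds because $T\in\LrhozRd$ gives $\int\|y\|^2\dd T_*\rho_0=\|T\|^2_{\Lrhoz}<\infty$. Conversely, for any $\rho\in\PRd$ the Brenier map $\optmap{\rho_\smallzero}{\rho}\in\K$ satisfies $\pi(\optmap{\rho_\smallzero}{\rho})=\rho$; this is where $\rho_0\in\PRdac$ is used. It follows that
\begin{equation}
    \inf_{\LrhozRd}F=\inf_{\K}F=\inf_{\PRd}D,
\end{equation}
since $F=D\circ\pi$ and $\pi$ maps both $\LrhozRd$ and $\K$ onto $\PRd$.

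\emph{Step 2: the minimizers correspond.} Fix $\rho^\star\in\argmin_{\PRd}D$. Then $\optmap{\rho_\smallzero}{\rho^\star}\in\K$ and $F(\optmap{\rho_\smallzero}{\rho^\star})=D(\rho^\star)$, which equals the common infimum. Hence $\optmap{\rho_\smallzero}{\rho^\star}$ minimizes $F$ over $\K$ and over $\LrhozRd$, so $\rho^\star\in\pi(\argmin_\K F)\subset\pi(\argmin_{\LrhozRd}F)$. Conversely, if $T$ minimizes $F$ on $\LrhozRd$ (or on $\K$), then $D(\pi(T))=F(T)=\inf D$, so $\pi(T)\in\argmin D$. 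Both chains of inclusions close, which gives the two equalities. The only point to watch is that the inclusion $\pi(\argmin_{\K}F)\subset\pi(\argmin_{\LrhozRd}F)$ relies on the infima being equal, which is exactly Step 1.

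\emph{Step 3: uniqueness.} By Brenier's theorem, $\pi|_\K$ is injective: if $T,S\in\K$ satisfy $\pi(T)=\pi(S)=\rho$, then both are gradients of convex functions pushing $\rho_0$ onto $\rho$, so $T=S$ in $\LrhozRd$. Combined with $\argmin_{\PRd}D=\pi(\argmin_\K F)$, this makes $\pi$ a bijection between $\argmin_\K F$ and $\argmin_{\PRd} D$, so one set is a singleton exactly when the other is.

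The statement is about uniqueness in $\K$ and not in $\LrhozRd$, and this restriction is essential. In $\LrhozRd$ the whole fiber $\pi^{-1}(\rho^\star)$ consists of minimizers, and that fiber is generally not a singleton. I do not expect any real obstacle here: the only substantive input is the existence and uniqueness part of Brenier's theorem.
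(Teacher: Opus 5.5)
Your proposal is correct and takes the same route as the paper. The paper's proof consists only of $F=D\circ\pi$ together with the bijectivity of $\pi|_{\K}:\K\to\PRd$ given by Brenier's theorem; your Steps 1 through 3 spell out the surjectivity, the matching of minimizers via equal infima, and the injectivity that this one-line argument leaves implicit.
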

\begin{proof}
The result directly follows from the definition of $F$ as $D\circ\pi$ and from the bijectivity of the pushforward mapping $\K\ni T\mapsto T_*\rho_0\in\PRd$ given by \nameref{thm:brenier}.
\end{proof}
\noindent To guarantee the convergence of the constrained gradient flow \cref{eq:constrained-flow}, we need some convexity assumption on $F$ on the convex set $\K$ of optimal maps, subset of the Hilbert space $\LrhozRd$.
This convexity is easily rewritten in terms of convexity of $D$ along (generalized) geodesics in $\PRd$, as we show now.
\begin{lemma}[Convexity of the lifted functional]
\label{lem:cvx-simple}
    Consider $\rho_0$, $D$ and $F$ defined above and let $\gamma\in\PRd$.
Let us consider the following properties:
\begin{enumerate}[(i),leftmargin=1.3cm]
    \item[$(F2)$]
    $F$ is convex on $\K$, that is, along curves of the form 
    \begin{equation}
        (1-t)T_1+tT_2
        \qquad\text{for all } T_1,T_2\in \K.\qquad\quad\ \qquad\   
    \end{equation}
    \item[$(F3)$]
    $F$ is star-convex on $\K$ around $\optmap{\rho_\smallzero}{\gamma}$, that is, convex along curves of the form 
    \begin{equation}
        (1-t)T +t\optmap{\rho_\smallzero}{\gamma}
        \qquad\text{for all } T\in \K.\qquad\qquad\quad\qquad\ 
    \end{equation}
    \item[$(D2)$]
    $D$ is convex on $\PRd$ along generalized geodesics with anchor point $\rho_0$, that is, along curves of the form
        \begin{equation}
            [(1-t)\optmap{\rho_\smallzero}{\rho_\smallone}+t\optmap{\rho_\smallzero}{\rho_\smalltwo}]_*\rho_0\qquad \text{for all } \rho_1,\rho_2\in\PRd.\qquad\qquad\quad\ \qquad\ 
        \end{equation}
    \item[$(D3)$]
    $D$ is convex on $\PRd$ along generalized geodesics with anchor point $\rho_0$ and endpoint $\gamma$, that is, along curves of the form
        \begin{equation}
            [(1-t)\optmap{\rho_\smallzero}{\rho_\smallone}+t\optmap{\rho_\smallzero}{\gamma}]_*\rho_0\qquad \text{for all } \rho_1\in\PRd.\qquad\qquad\qquad\ \qquad\ 
        \end{equation}
\end{enumerate}
\noindent 
Then, properties \textit{$(F1)$} to \textit{$(D2)$} fit in the following diagram of implications:
    \begin{center}
    \begin{tikzcd}[row sep=normal, column sep=scriptsize]
    (F1) \ar[r, Rightarrow]
    &
    (F2) 
    \\
    (D1) \ar[u, Rightarrow]\ar[r, Rightarrow]
    &
    (D2)\ar[u, Rightarrow]
    \end{tikzcd}
    \end{center}
and the same holds when replacing ``convex'' by ``$\lambda$-convex'' for any $\lambda\in\bR$ above.
\end{lemma}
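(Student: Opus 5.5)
The statement has a labelling mismatch: it lists $(F2),(F3),(D2),(D3)$ but the diagram uses $(F1),(F2),(D1),(D2)$. I read it as $(D2)\Rightarrow(F2)$, $(D3)\Rightarrow(F3)$, $(F2)\Rightarrow(F3)$ and $(D2)\Rightarrow(D3)$. Both horizontal implications are immediate specializations. For $(F2)\Rightarrow(F3)$, take $T_1=T$ and $T_2=\optmap{\rho_\smallzero}{\gamma}$, which lies in $\K$ by \nameref*{thm:brenier}. For $(D2)\Rightarrow(D3)$, take $\rho_1$ arbitrary and $\rho_2=\gamma$. The substantive content is therefore in the vertical implications, and I will prove both with one identification argument.

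\textbf{Identifying the curves.} Given $T_1,T_2\in\K$, set $\rho_i\coloneqq T_i{}_*\rho_0$. Since $\rho_0\in\PRdac$, the remark after \nameref*{thm:brenier} says that the gradient of a convex function is the optimal map onto its own pushforward. Hence $T_i=\optmap{\rho_\smallzero}{\rho_i}$ in $\LrhozRd$, and the segment $(1-t)T_1+tT_2$ is mapped by $\pi$ exactly onto the generalized geodesic $[(1-t)\optmap{\rho_\smallzero}{\rho_\smallone}+t\optmap{\rho_\smallzero}{\rho_\smalltwo}]_*\rho_0$ with anchor point $\rho_0$. Because $F=D\circ\pi$, each value $F((1-t)T_1+tT_2)$ equals $D(\rho_t)$ for that curve, and $F(T_i)=D(\rho_i)$. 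The convexity inequality for $D$ along this curve therefore transfers verbatim to $F$ along the segment.

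\textbf{The $\lambda$-term.} In the inequality \cref{eq:cvx-wass-gen-ac}, the quadratic term is $\|\optmap{\rho_\smallzero}{\rho_\smallone}-\optmap{\rho_\smallzero}{\rho_\smalltwo}\|^2_{\Lrhoz}=\|T_1-T_2\|^2_{\Lrhoz}$. This is exactly the Hilbert-space term in \cref{eq:cvx}, so $\lambda$-convexity transfers as well. The same computation with $T_2=\optmap{\rho_\smallzero}{\gamma}$ gives $(D3)\Rightarrow(F3)$.

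\textbf{Main obstacle.} The only delicate point is the non-absolutely-continuous framework of \cref{appendix:generalizedGeod}, where convexity is asserted along \emph{some} generalized geodesic, built from a 3-plan $\widetilde\pi$. The fix is to note that the anchor $\rho_0$ is absolutely continuous, so the optimal plans from $\rho_0$ to $\rho_1$ and to $\rho_2$ are unique and induced by the maps $T_1,T_2$. Any admissible $\widetilde\pi$ is then forced to equal $(\id,T_1,T_2)_*\rho_0$. So the generalized geodesic is unique and is the curve above, and $\iiint\|y-z\|^2\dd\widetilde\pi=\|T_1-T_2\|^2_{\Lrhoz}$.
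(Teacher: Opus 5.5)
Your proof is correct and follows the paper's argument. The horizontal implications come from specializing to $T_2=\optmap{\rho_\smallzero}{\gamma}$ (resp.\ $\rho_2=\gamma$), and the vertical ones from using Brenier's theorem to identify $T_i=\optmap{\rho_\smallzero}{\rho_i}$ with $\rho_i=T_i{}_*\rho_0$, so that $F=D\circ\pi$ carries the convexity inequality along the generalized geodesic over to the segment $(1-t)T_1+tT_2$. The paper leaves two points implicit that you check correctly: that the $\lambda$-term is exactly $\|T_1-T_2\|^2_{\Lrhoz}$, and that with an absolutely continuous anchor any admissible 3-plan is forced to be $(\id,T_1,T_2)_*\rho_0$.
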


\begin{proof}
    First, since $\optmap{\rho_\smallzero}{\gamma}\in\K$, one has $(F1)\Rightarrow(F2)$; and taking $\rho_2\coloneqq \gamma$ in $(D2)$ yields $(D1)\Rightarrow(D2)$.\\
    Remark that by definition, $F=D\circ\pi$ is convex along some curve $T_t$ if and only if $D$ is convex along the curve $\pi(T_t)=T_t{}_*\rho_0$.
    Suppose now that $(D1)$ is true.
    Let $T_1,T_2\in\K$ and note $\rho_1\coloneqq T_1{}_*\rho_0$ and $\rho_2\coloneqq T_2{}_*\rho_0$. Then $T_1=\optmap{\rho_\smallzero}{\rho_\smallone}$ and $T_2=\optmap{\rho_\smallzero}{\rho_\smalltwo}$ and $D$ is therefore convex along the curve $[(1-t)T_1+tT_2]_*\rho_0$. Hence $F$ is convex along $(1-t)T_1+tT_2$; this proves $(F1)$.
    Suppose now that $(D2)$ is true. Let $T\in\K$ and let $\rho_1\coloneqq T_*\rho_0$. Then $T=\optmap{\rho_\smallzero  }{\rho_\smallone}$ and $D$ is therefore convex along the curve $[(1-t)T+t\optmap{\rho_\smallzero}{\gamma}]_*\rho_0$. Hence $F$ is convex along $(1-t)T+t\optmap{\rho_\smallzero}{\gamma}$; this proves $(F2)$.
\end{proof}

\noindent The next lemma then shows that the lower semicontinuity of the lifted functional $F$ is also inherited from the lower semicontinuity of $D$.

\begin{lemma}[Lower semicontinuity of the lifted functional]
    \label{cor:lsc-topo}
    Consider $D$ and $F$ defined above.
        If $D$ is weak-l.s.c.~on $\PRd$, then $F$ is strong-l.s.c.~on $\LrhozRd$.
        If additionally $D$ is convex along generalized geodesics with anchor point $\rho_0$, then $F$ is weak-l.s.c.~on $\K$.
\end{lemma}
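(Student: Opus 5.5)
For the first claim, I will take a sequence $T_n\to T$ strongly in $\LrhozRd$ and set $\rho_n\coloneqq T_n{}_*\rho_0$ and $\rho\coloneqq T_*\rho_0$. The inequality $\W_2(T_n{}_*\rho_0,T_*\rho_0)\leq\|T_n-T\|_{\Lrhoz}$, recalled at the end of \cref{sec:OT}, gives $\W_2(\rho_n,\rho)\to0$. Hence $\rho_n\rightharpoonup\rho$ weakly in $\PRd$, since $\W_2$ metrizes weak convergence. Weak lower semicontinuity of $D$ then yields
\[
F(T)=D(\rho)\leq\liminf_n D(\rho_n)=\liminf_n F(T_n),
\]
which is strong lower semicontinuity of $F$. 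This step involves no obstacle: the pushforward map $\pi$ is $1$-Lipschitz from $\LrhozRd$ to $(\PRd,\W_2)$.

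For the second claim, I will work on $\K$ and use the standard fact that in a Hilbert space a convex, strongly l.s.c.\ functional on a closed convex set is weakly l.s.c. The reason is that its sublevel sets are convex and strongly closed, hence weakly closed by Mazur's lemma. First, by \cref{lem:cvx-simple} (implication $(D2)\Rightarrow(F2)$), convexity of $D$ along generalized geodesics with anchor point $\rho_0$ gives convexity of $F$ on $\K$. Second, $\K$ is closed and convex by \cref{prop:convex-closed}, so $F_\K=F+\imath_\K$ is convex on $\LrhozRd$. By the first part together with the closedness of $\K$, $F_\K$ is also strongly l.s.c. Now fix $c\in\bR$ and consider the sublevel set $\{T\in\K\mid F(T)\leq c\}=\{F_\K\leq c\}$. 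It is convex and strongly closed, hence weakly closed. Therefore, if $T_n\rightharpoonup T$ weakly with $T_n\in\K$, we get $T\in\K$ (as $\K$ is weakly closed, being closed and convex), and $F(T)\leq\liminf_n F(T_n)$. To see the last inequality, extract a subsequence realizing the liminf and apply weak closedness of the sublevel set at every level $c$ above that liminf.

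The only point requiring care is to make sure convexity is needed only on $\K$ and not on all of $\LrhozRd$; $F$ is genuinely non-convex on $\LrhozRd$, being constant on the fibers of $\pi$. Routing the argument through $F_\K$ and its sublevel sets handles this cleanly. If the paper intends $D$ to possibly take the value $+\infty$, the same argument applies verbatim, since sublevel sets are unaffected.
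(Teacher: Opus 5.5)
Your proof is correct and follows the paper's argument. The first part is the same composition with the $1$-Lipschitz pushforward map, and for the second part the paper invokes Brezis' Corollary~3.9 (convex and strongly l.s.c.\ implies weakly l.s.c.), which is exactly the Mazur/sublevel-set argument you spell out; applying it to $F_{\K}=F+\imath_\K$ rather than to $F$ is the right way to make that citation rigorous, since $F$ is only convex on $\K$.
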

\begin{proof}
From the inequality $\W_2(T_*\rho_0,S_*\rho_0)\leq \|T-S\|_{\Lrhoz}$ for all $T,S\in\LrhozRd$, one gets that the pushforward mapping $\pi:T\mapsto T_*\rho_0$ is continuous from $\LrhozRd$ with the strong topology to $\PRd$ with the weak topology. The first result then follows by composition. If additionally $D$ is convex along generalized geodesics with anchor point $\rho_0$, then $F$ is convex on $\K$ (see \cref{lem:cvx-simple}) and the second result follows from the fact that lower semicontinuity for the strong and weak topologies coincide for convex functionals in Hilbert spaces \cite[Corollary~3.9]{brezis2011functional}.
\end{proof}
\noindent Finally, the following result from \citeauthor{gangbo2019differentiability}~\cite[Corollary~3.22]{gangbo2019differentiability} allows to link the differentiability properties of $F$ in $\LrhozRd$ to that of $D$ in $\PRd$.
\begin{lemma}[Differentiability of the lifted functional]
    \label{lem:frechet-diff}
    Consider $D$ and $F$ defined above.
    For all $T\in\LrhozRd$, $F$ is (Fréchet) differentiable at $T$ if and only if $D$ is (Wasserstein) differentiable at $\pi(T)=T_*\rho_0$, and in this case
    \begin{equation}
        \nabla F(T)=\nablaa D(T_*\rho_0)\circ T,
    \end{equation}
    where the equality is to be understood $\rho_0$-a.e.
\end{lemma}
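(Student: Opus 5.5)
The plan is to deduce the statement from the corresponding result of \citeauthor{gangbo2019differentiability}~\cite[Corollary~3.22]{gangbo2019differentiability}, translating between their framework and ours. The key structural fact is that $F=D\circ\pi$ is constant on the fibers $\pi^{-1}(\rho)$, and that these fibers are related by measure-preserving rearrangements. Indeed, since $\rho_0\in\PRdac$, $(\bR^d,\rho_0)$ is a standard atomless probability space, so any two maps $T,S$ with $T_*\rho_0=S_*\rho_0$ are $L^2$-limits of each other up to measure-preserving maps of $\rho_0$. Thus $F$ is a \emph{rearrangement-invariant} (law-invariant) functional on $\LrhozRd$, which is the setting of the ``lift'' in Lions' sense.

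For the direction ``$D$ Wasserstein differentiable at $\rho=T_*\rho_0$ $\Rightarrow$ $F$ Fréchet differentiable at $T$ with $\nabla F(T)=\nablaa D(\rho)\circ T$'', I would argue directly. Take $S\to T$ in $\LrhozRd$, and set $\gamma_S:=S_*\rho_0$. Then $\W_2(\rho,\gamma_S)\le\|S-T\|_{\Lrhoz}$, so $\gamma_S\rightharpoonup\rho$, and the $o(\W_2)$ term in \cref{eq:gradient-W} is $o(\|S-T\|)$. The difficulty is that \cref{eq:gradient-W} is stated with \emph{optimal} plans, whereas $(T,S)_*\rho_0$ is merely admissible. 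This is the main obstacle, and it is exactly where \cite[Theorem~3.10]{gangbo2019differentiability} is needed. Their result is that a Wasserstein gradient $\xi\in\partial^-D\cap\partial^+D$ (a strong differential) yields the expansion
\[
D(\gamma)-D(\rho)=\iint\langle \xi(x),y-x\rangle\,\dd\pi(x,y)+o\Big(\big(\textstyle\iint\|x-y\|^2\dd\pi\big)^{1/2}\Big)
\]
uniformly over \emph{all} plans $\pi\in\Pi(\rho,\gamma)$, not only optimal ones. Applying it with $\pi=(T,S)_*\rho_0$ gives
\[
F(S)-F(T)=\int\langle\nablaa D(\rho)\circ T,S-T\rangle\,\dd\rho_0+o(\|S-T\|_{\Lrhoz}).
\]
This is Fréchet differentiability with the claimed gradient, and that gradient lies in $\LrhozRd$ because $\nablaa D(\rho)\in\LrhoRd$.

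For the converse, I would assume $F$ is Fréchet differentiable at $T$ with gradient $G$, and proceed in three steps. First, using rearrangement invariance, show that $G$ is $\sigma(T)$-measurable, that is, $G=\xi\circ T$ for some $\xi\in L^2_\rho$. To do this, average $G$ over measure-preserving maps fixing $T$; the gradient must be invariant, so it is a conditional expectation given $T$. Second, for $\gamma$ near $\rho$ and an optimal plan $\pi_\gamma$, realize $\pi_\gamma$ as $(T,S)_*\rho_0$ with $\|S-T\|_{\Lrhoz}=\W_2(\rho,\gamma)$. This uses non-atomicity of $\rho_0$, possibly only up to an arbitrarily small error, handled by density. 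Third, plug this into the Fréchet expansion to obtain \cref{eq:gradient-W} with $\xi$, and place $\xi$ in $\Tan_\rho\PRd$ by the minimal-norm selection.

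The realization step, together with uniformity of the remainder under approximate realizations, is the delicate technical point. Here I would rely on \cite[Corollary~3.22]{gangbo2019differentiability}, which handles exactly this equivalence. The identity holds $\rho_0$-a.e. because both sides are only defined as elements of $\LrhozRd$.
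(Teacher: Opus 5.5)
Your top-level route is the paper's own: the paper gives no argument beyond invoking \cite[Corollary~3.22]{gangbo2019differentiability}. Your forward direction is sound. The expansion survives for the non-optimal plan $(T,S)_*\rho_0$ precisely because $\nablaa D(\rho)\in\Tan_\rho\PRd$. For $\nabla f$ with $f\in C^\infty_c(\bR^d)$, one has $\iint\langle\nabla f(x),y-x\rangle\dd\pi=\int f\dd\gamma-\int f\dd\rho+O\big(\iint\|x-y\|^2\dd\pi\big)$ for \emph{every} $\pi\in\Pi(\rho,\gamma)$, and density gives the $o$-term. Tangency is essential here: adding a nonzero field $\eta$ with $\div(\rho\eta)=0$ keeps $\xi$ in $\partial^-D(\rho)\cap\partial^+D(\rho)$, but it breaks the expansion along non-optimal plans.

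Step~1 of your converse, however, fails as described. Invariance of $G$ under measure-preserving maps $\tau$ of $\rho_0$ with $T\circ\tau=T$ does not give $\sigma(T)$-measurability when the conditional measures of $\rho_0$ on the fibers of $T$ have atoms of unequal mass. For a counterexample, take $d=1$, $\rho_0=f\dd x$ a Gaussian not centered at $0$ (so $f(y)\neq f(-y)$ for $y>0$), and $T(x)=|x|$. Any such $\tau$ satisfies $\tau(x)\in\{x,-x\}$, and $\tau_*\rho_0=\rho_0$ then forces $\tau=\id$ $\rho_0$-a.e. So your invariance condition is void, while $\sigma(T)$-measurability means that $G$ is even. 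What actually forces evenness is law-invariance of $F$ at points near $T$ but off its fiber $\{S: S_*\rho_0=T_*\rho_0\}$. For $\phi\in C^\infty_c((0,\infty))$, one can choose $\psi_\eps$ with $\psi_\eps(y)=\phi(y)f(y)/f(-y)+o(1)$ so that $T+\eps\phi(|x|)\mathbf{1}_{\{x>0\}}$ and $T+\eps\psi_\eps(|x|)\mathbf{1}_{\{x<0\}}$ have the same law. Subtracting their Fréchet expansions yields $\int_0^\infty\big(G(y)-G(-y)\big)\phi(y)f(y)\dd y=0$. Step~1 therefore needs such law-matching perturbations, or must also be delegated to Gangbo--Tudorascu.

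Lastly, projecting $\xi$ onto $\Tan_\rho\PRd$ via the minimal-norm selection would change $\xi\circ T$, so it cannot by itself yield $G=\nablaa D(\rho)\circ T$. Instead, once Steps~1--3 show that $D$ is Wasserstein differentiable at $\rho$, apply your forward direction and use uniqueness of the Fréchet gradient to get $G=\nablaa D(\rho)\circ T$.
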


\end{document}